\title{Core blocks for Hecke algebras of type $B$ and sign sequences}
\author[S.~Lyle]{Sin\'ead Lyle}
\address{School of Mathematics, University of East Anglia, Norwich NR4 7TJ, UK.}
\email{s.lyle@uea.ac.uk}
\subjclass[2020]{20C08, 20C30, 05E10}
\keywords{Ariki-Koike algebras, abacuses}
\numberwithin{equation}{section}
\numberwithin{figure}{section}
\newtheorem{lemma}{Lemma}[section]
\newtheorem{theorem}[lemma]{Theorem}
\newtheorem{proposition}[lemma]{Proposition}
\newtheorem{corollary}[lemma]{Corollary}
\theoremstyle{remark}
\newtheorem*{ex}{Example}
\newcommand{\la}{\lambda}
\newcommand{\La}{\Lambda}
\newcommand{\bla}{\boldsymbol \la}
\newcommand{\bmu}{\boldsymbol \mu}
\newcommand{\bnu}{\boldsymbol \nu}
\newcommand{\bsig}{\boldsymbol \sigma}
\newcommand{\btau}{\boldsymbol \tau}
\newcommand{\bb}{{\mathbf b}}
\newcommand{\circo}{~\raisebox{1pt}{\tikz \draw[line width=0.6pt] circle(1.4pt);}~}
\newcommand{\bz}{\bb^{\!\!\!\!\circo}\!\!}
\newcommand{\bzi}{b^{\!\!\!\!\circo}_i\!}
\newcommand{\re}{\mathbf r}
\newcommand{\de}{\delta}
\newcommand{\mc}{\mathfrak{a}}
\newcommand{\ms}{\mathfrak{s}}
\newcommand{\rr}{\texttt{r}}
\newcommand{\ab}{\texttt{a}}
\newcommand{\Z}{\mathbb{Z}}
\newcommand{\N}{\mathbb{N}}
\newcommand{\F}{\mathcal{F}}
\newcommand{\h}{\mathcal{H}}
\newcommand{\St}{\mathcal{S}}
\newcommand{\R}{\mathcal{R}}
\DeclareMathOperator{\res}{res}
\DeclareMathOperator{\Res}{Res}
\DeclareMathOperator{\wt}{wt}
\DeclareMathOperator{\Hom}{Hom}
\DeclareMathOperator{\rad}{rad}
\newcommand{\emp}{\varnothing}
\newcommand{\Cr}[1]{s_{#1}}
\newcommand{\ra}{\raisebox{0.1cm}{
\begin{tikzpicture}
\draw [<-] (0,0) to [out =60, in = 120] (.4,0);
\end{tikzpicture}}}
\newcommand{\rra}{\raisebox{0.1cm}{
\begin{tikzpicture}
\draw [<-] (0,0) to [out =60, in = 120] (.4,0);
\draw (0.1,0) to [out =60, in = 120] (.3,0);
\end{tikzpicture}}}
\begin{document}
\begin{abstract}
We consider the core blocks corresponding to the Hecke algebras of type $B$ over a field of arbitrary characteristic. To each core block, we associate two non-negative integers which determine the indexing of the Specht modules and simple modules in the block, the weight of the block, the multicharge of the algebra (up to a shift) and the block decomposition matrix.
\end{abstract}
\maketitle

\section{Introduction}

Let $\h=\h_{r,n}(q,{\bf Q})$ denote an Ariki-Koike algebra over a field $\mathbb{F}$ of characteristic $p$ with quantum characteristic $e \in \{2,3,\ldots\} \cup \{\infty\}$. Each of these algebras decomposes into a direct sum of indecomposable two-sided ideals, its blocks, and so in order to try to understand the algebras one may study certain types of block in the hope that they are more manageable.
 
There is an important class of $\h$-modules which are indexed by the $r$-multipartitions of $n$ and which are known as Specht modules. The composition factors of each Specht module all lie in the same block and so we may think of partitioning the Specht modules into blocks. One may consider the block decomposition matrix, which records the composition factors of the Specht modules belonging to a given block. 
In general, computing block decomposition matrices is hard. There exist recursive algorithms which compute the transition coefficients for a highest weight module of the Fock space representation of $\mathcal{U}_q(\widehat{\mathfrak{sl}}_e)$; by Ariki's theorem~\cite{Ariki}, these coefficients coincide with the decomposition numbers for the Ariki-Koike algebras when $p=0$. When $p>0$, the transition coefficients give a lower bound for the decomposition numbers, and may be considered as a first approximation to them. It is reasonable to ask when this approximation is precise, or when the block decomposition matrix is independent of the characteristic of the field. 

In this paper, we look at the core blocks for the Hecke algebras of type $B$, that is, the Ariki-Koike algebras $\h=\h_{2,n}(q,{\bf Q})$. These core blocks, which were initially introduced and studied by Fayers~\cite{Fayers:Cores}, are blocks in which none of the bipartitions indexing the Specht modules have any removable $e$-rim hooks. If $e=\infty$ or $e>n$, all blocks are core blocks. 
To each core block $B$, we associate two non-negative integers $n_B$ and $p_B$. We show that these two integers determine the indexing of the Specht modules and simple modules in the block, the weight of the block, the multicharge of $\h$ (up to a shift) and the block decomposition matrix. We present a result of George Witty~\cite{WittyThesis} that shows that in some situations they also determine the homomorphism space between pairs of Specht modules in the block. All these results are independent of the characteristic of the field. 

When $e=\infty$ or $e>n$, decomposition numbers for the Hecke algebras of type $B$, or the corresponding cyclotomic $q$-Schur algbras, are not new. When $e=\infty$ and the parameters of the multicharge are weakly increasing, Leclerc and Miyachi~\cite[Theorem~3]{LM:FockI} give a closed formula for the canonical basis elements of the irreducible highest weight representation $V(\Lambda)$; by Ariki's Theorem~\cite{Ariki}, this gives the graded decomposition numbers for the Hecke algebras when $e=\infty$ and $p=0$. 
When $e=\infty$ and $p=0$, Brundan and Stroppel~\cite[Section~9]{BS:KhovIII} apply the theory they have developed earlier in their paper to construct a basis for the radical of their cell module $S(\bla)$, thus determining the graded dimension of the simple modules. When $e=\infty$ or $e>n$, Hu and Mathas~\cite[Appendix~B]{HuMathas} give a formula for the graded decomposition numbers of the quiver Schur algebras of level two in terms of tableaux combinatorics; this is independent of the characteristic of the field. In current work~\cite{BDHS,BDDHMS}, the authors determine full submodule structures for the Specht modules of the core blocks of the Hecke algebras and the Weyl modules for their Schur algebras.

Each paper above uses a different notation. 
In this paper, we index the (bipartitions corresponding to) Specht modules in a core block by sign sequences, sequences containing elements from $\{0,-,+\}$ with a fixed number of entries of each type, where the $0$ entries are essentially redundant. The integers $n_B$ and $p_B$ above count the numbers of $-$s and $+$s respectively. The structure of the core blocks means that our combinatorics feels extremely natural. When $e=\infty$, it is straightforward to pass from our notation to that of~\cite{LM:FockI} or \cite{BDHS}. 

The structure of this paper is as follows. In Section~\ref{S:Background} we introduce the background material: Section~\ref{S:Abacus} defines the relevant combinatorics and Section~\ref{S:AK} introduces the Ariki-Koike algebras. For more information on the Ariki-Koike algebras and their combinatorics, we refer the reader to the survey paper of Mathas~\cite{Mathas:AKSurvey} and for more information on their connections with the cyclotomic KLR algebras of type $A$, we refer them to the survey paper of Kleshchev~\cite{Kleshchev:Survey}. 

In Section~\ref{S:r2}, we move on to considering the core blocks, as introduced by Fayers~\cite{Fayers:Cores}, when $r=2$. In Section~\ref{S:SS} we define the sign sequences and in Section~\ref{S:DecompMat} we show how they index the block decomposition matrices for core blocks. In Section~\ref{S:Fock} we briefly introduce the Fock space representation of $\mathcal{U}_v(\widehat{\mathfrak{sl}}_e)$ and describe its connection with the Ariki-Koike algebras; more details can be found in the papers of Lascoux, Leclerc and Thibon~\cite{LLT} and Ariki~\cite{Ariki}. In Section~\ref{S:Decomp}, we find the decomposition numbers for the core blocks. We start with the special case that the base tuple is flat, which includes the case that $e=\infty$, and then use what is essentially a focused version of Scopes equivalence~\cite{DA,Scopes} to generalize it to arbitrary core blocks. 
Finally, in Section~\ref{S:Wrapup} we summarize our results, present Witty's theorem and give some examples. 

\section{Background} \label{S:Background}
Throughout this paper, we will use a parameter $e \in \{2,3,\ldots\}\cup \{\infty\}$. If $e$ is finite, we  define $I=\{0,1,\ldots,e-1\}$ which we may identify with $\Z/e\Z$; otherwise we define $I=\Z$. In both cases we take $<$ to be the usual total order on $I$.

\subsection{Multipartitions and abacus configurations} \label{S:Abacus}
Suppose that $n \geq 0$. 
A partition of $n$ is a sequence $\la=(\la_1,\la_2,\ldots)$ of non-negative integers such that $\la_1 \geq \la_2 \geq \ldots$ and $\sum_{i \geq 1} \la_i =n$. We write $|\la|=n$. We let $\La_n$ denote the set of partitions of $n$ and $\La=\bigcup_{n \geq 0} \La_n$ denote the set of all partitions. Now suppose that $r \geq 1$. An $r$-multipartition, or multipartition, of $n$ is an $r$-tuple of partitions $\bla=(\la^{(1)},\la^{(2)},\ldots,\la^{(r)})$ such that $\sum_{k=1}^r |\la^{(k)}|=n$. We write $|\bla|=n$. We let $\La^{r}_n$ denote the set of $r$-multipartitions of $n$ and $\La^{r}=\bigcup_{n \geq 0} \La^{r}_n$ denote the set of all $r$-multipartitions.

Suppose that $\bla \in \La^{r}$. The Young diagram of $\bla$ is the set 
\[[\bla] = \{(x,y,k) \in \Z_{>0} \times \Z_{>0} \times \{1,2, \ldots, r\} \mid y \leq \la_x^{(k)}\}.\]
We say that a node $\mathfrak{n} \in [\bla]$ is removable if $[\bla] \setminus \{\mathfrak{n}\}$ is the Young diagram of a multipartition and we say that $\mathfrak{n} \notin [\bla]$ is addable if $[\bla] \cup \{\mathfrak{n}\}$ is the Young diagram of a multipartition. 
The rim of $[\bla]$ is the set of nodes $\{(x,y,k) \in [\bla] \mid (x+1,y+1,k) \notin [\bla]\}$. For $h \geq 1$, a removable $h$-rim hook is a set of $h$ connected nodes in the rim such that removing those nodes gives the Young diagram of a multipartition. 

Now fix $e \in \{2,3,\ldots\} \cup \{\infty\}$.  Suppose that $\mc \in \Z^r$. 
To each node $\mathfrak{n}=(x,y,k) \in  \Z_{>0} \times \Z_{>0} \times \{1,2 \ldots r\}$ we associate its residue $\res_{\mc}(\mathfrak{n}) \in I$. If $e$ is finite (resp. $e=\infty$) we set $\res_{\mc}(x,y,k) = a_k + y - x \mod e$ (resp. $\res_{\mc}(x,y,k)=a_k+y-x$). 
For $\bla \in \La^{r}$ we define the residue set of $\bla$ to be the multiset $\Res_{\mc}(\bla) = \{\res(\mathfrak{n}) \mid \mathfrak{n} \in [\bla]\}$. We define an equivalence relation $\sim_{\mc}$ on $\La^{r}$ by saying that $\bla \sim_{\mc} \bmu$ if and only if $\Res_{\mc}(\bla)=\Res_{\mc}(\bmu)$ and we refer to the $\sim_{\mc}$-equivalence classes of $\La^r$ as blocks. Clearly if $e$ is finite and $\ms \in \Z^r$ with $s_k \equiv a_k \mod e$ for $1 \leq k \leq r$ then $\bla \sim_{\mc} \bmu$ if and only if $\bla \sim_{\ms} \bmu$. 

Given $\mc \in \Z^r$, we define a subset $\La^{\mc}\subset \La^{r}$. For $i \in I$, we call an addable (resp. removable) node of residue $i$ an addable (resp. removable) $i$-node. 
Given $\bmu \in \La^{r}$ and $i \in I$, we define a total order $\lhd$ on the set of addable and removable nodes of $[\bmu]$ by saying that $(x_1,y_1,k_1) \lhd (x_2,y_2,k_2)$ if $k_1<k_2$ or if $k_1=k_2$ and $x_1 <x_2$. We may then define the $i$-signature of $\bmu$ by looking at all the addable and removable $i$-nodes of $[\bmu]$ ordered according to $\lhd$ and writing $\ab$ for an addable $i$-node and $\rr$ for a removable $i$-node. We then construct the reduced $i$-signature of $\bmu$ by repeatedly removing all adjacent $(\rr\ab)$-pairs until there are no such pairs left. If there are any $\rr$ terms in the reduced $i$-signature of $\bmu$, the first $\rr$ term corresponds to a removable $i$-node of $[\bmu]$ which is called a good $i$-node.

It is usual to write $+$ instead of $\ab$ and $-$ instead of $\rr$, however we want use those symbols for other purposes in this paper. The set $\La^{\mc}$ is defined recursively. 
  Suppose that $\bmu \in \La^{r}$.
  \begin{itemize}
  \item If $|\bmu|=0$ then $\bmu \in \La^{\mc}$. 
  \item Otherwise, if $\bmu$ does not contain a good $i$-node for any $i \in I$ then $\bmu \notin \La^{\mc}$.    
    \item Otherwise, suppose that $\mathfrak{n}$ is a good $i$-node of $\bmu$ for some $i \in I$. Let $\bar{\bmu}$ be the multipartition whose Young diagram is obtained from $[\bmu]$ by removing the node $\mathfrak{n}$. Then $\bmu \in \La^{\mc}$ if and only if $\bar{\bmu} \in \La^{\mc}$.   
    \end{itemize}

The multipartitions in $\La^{\mc}$ are known as Kleshchev multipartitions; they have the property that if $\bmu \in \La^{\mc}$ then $\mu^{(k)}$ is $e$-restricted for all $1 \leq k \leq r$. We set $\La^{\mc}_n = \La^{\mc} \cap \La^r_n$.

Given an $r$-multipartition, it is convenient to represent it as a $r$-tuple of abacus configurations. 
If $e$ is finite, an $e$-abacus is an abacus with $e$ vertical runners which are infinite in both directions and which are indexed from left to right by the elements of $I$. The possible bead positions are indexed by the elements of $\Z$ such that bead position $b$ on the abacus is in row $l$ of runner $i$ where $b=le+i$ and $i \in I$. If $e=\infty$, the abacus has runners and bead positions indexed by the elements of $I=\Z$, so that runner $i\in \Z$ contains either one bead or no beads.  

For $\la \in \La$ and $a \in \Z$, define the $\beta$-set \[B_a(\la)=\{\la_x - x + a \mid x \geq 1\}.\]
We define the abacus configuration of $\la$ with respect to $a$ to be the abacus where we put a bead at position $b$ for each $b \in B_a(\la)$. 
For $\bla \in \La^{r}$ and $\mc \in \Z^r$ we define the abacus configuration of $\bla$ with respect to $\mc$ to be an $r$-tuple of abacus configurations, the $k^{\text{th}}$ one of which is the abacus configuration of $\la^{(k)}$ with respect to $a_k$. 

Operations on the Young diagram of $\bla$ can be translated into operations on the abacus configuration of $\bla$. 

\begin{lemma}
Let $\bla \in \La^r$ and $\mc \in \Z^r$. All residues below are with repect to $\mc$ as are all abacus configurations.  
\begin{itemize}
\item Removing an $i$-node from component $k$ of $[\bla]$ corresponds to moving a bead on runner $i$ of the abacus of $\la^{(k)}$ back by one position.
\item Adding an $i$-node to component $k$ of $[\bla]$ corresponds to moving a bead on runner $i-1$ of the abacus of $\la^{(k)}$ forward by one position.
\item If $e \ne \infty$ then adding (resp. removing) an $e$-rim hook to (resp. from) component $k$ of $[\bla]$ corresponds to pushing a bead on the abacus of $\la^{(k)}$ down one position (resp. up one position). 
\end{itemize}
\end{lemma}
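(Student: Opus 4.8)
The plan is to reduce to the case of a single partition. Since the abacus configuration of $\bla$ with respect to $\mc$ is by definition the $r$-tuple whose $k^{\text{th}}$ entry is the abacus of $\la^{(k)}$ with respect to $a_k$, and each of the three operations affects only one component, it is enough to fix $\la = \la^{(k)} \in \La$ and $a = a_k \in \Z$ and to watch how the $\beta$-set $B_a(\la) = \{\la_x - x + a \mid x \ge 1\}$ changes. The one fact I would set up first is that $x \mapsto \la_x - x + a$ is strictly decreasing (since $\la_x \ge \la_{x+1}$ forces $\la_x - x > \la_{x+1} - (x+1)$), so it is a bijection from the rows of $\la$ onto $B_a(\la)$, and the bead recording row $x$ occupies position $b := \la_x - x + a$.

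For the first two bullets I would argue by direct inspection of this bijection. A removable node in row $x$ is $(x, \la_x)$ with $\la_x > \la_{x+1}$; its residue is $a + \la_x - x$, which is exactly $b$ reduced modulo $e$, so the associated bead lies on runner $i$ precisely when the node has residue $i$. Deleting the node replaces $\la_x$ by $\la_x - 1$ and hence slides the bead from $b$ to $b-1$, one position back; and since the next element of $B_a(\la)$ below $b$ is $\la_{x+1} - (x+1) + a$, which equals $b-1$ exactly when $\la_{x+1} = \la_x$, the removability condition $\la_x>\la_{x+1}$ is equivalent to the target position $b-1$ being vacant. The second bullet is the mirror image: an addable node in row $x$ is $(x, \la_x+1)$ with $\la_{x-1} > \la_x$ (convention $\la_0 = \infty$), its residue $a + (\la_x+1) - x$ is one more than $b$, the bead for row $x$ therefore lies on runner $i-1$ when the node has residue $i$, and adding the node pushes that bead forward to $b+1$, a position that is empty exactly when $\la_{x-1} > \la_x$. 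I would draw attention to the asymmetry between the two bullets---runner $i$ for removal versus runner $i-1$ for addition---which is forced by the residue convention $\res_{\mc}(x,y,k) = a_k + y - x$ and is the only place where the indexing needs care.

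For the third bullet I would invoke the classical James description of rim hooks rather than reprove it from scratch: the removable $h$-rim hooks of $\la$ are in bijection with the pairs consisting of a bead of $B_a(\la)$ at a position $b$ together with a vacant position $b-h$, the hook being the connected strip of $h$ rim nodes whose removal slides that bead from $b$ to $b-h$ (and the inverse move adding such a hook). Taking $h = e$, the positions $b$ and $b-e$ differ by $e$ and so lie on the same runner in adjacent rows; thus removing an $e$-rim hook is precisely pushing a bead up one row, and adding one is pushing a bead down one row.

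The computations behind the first two bullets are routine once the bijection and the residue convention are pinned down, so I expect the main obstacle to be purely the consistent handling of the sign and modular conventions across the three cases. If a self-contained account were wanted, the genuinely substantive ingredient would be the James correspondence underlying the third bullet---that moving a bead back by $h$ places removes a connected rim hook of length $h$---which I would establish by reading the boundary lattice path of $[\la]$ directly off the bead--gap sequence of $B_a(\la)$ and matching the $h$ boundary edges lying strictly between positions $b-h$ and $b$ with the nodes of the hook.
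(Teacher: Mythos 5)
Your argument is correct. Note that the paper gives no proof of this lemma at all: it is stated as a standard fact, the first two bullets being immediate from the definition of $B_a(\la)$ and the residue convention, and the third being James' classical bead--hook correspondence. Your write-up is exactly the expected justification --- the strictly decreasing bijection $x \mapsto \la_x - x + a$ from rows to beads, the residue computation identifying the relevant runner (including the $i$ versus $i-1$ asymmetry forced by $\res_{\mc}(x,y,k)=a_k+y-x$), the vacancy conditions matching removability/addability, and the citation of James for rim hooks --- so there is nothing to fault and nothing in the paper to compare it against.
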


\begin{ex}
Take $e=3$.  Let $\bla=((6,5,2,1^2),(6,1))$ and take $\mc=(8,6)$. Then the abacus configuration of $\bla$ with respect to $\mc$ is given by
\begin{center}
  \abacus(bbb,nbb,nbn,nnb,nbn) \qquad \qquad \abacus(bbb,bnb,nnn,nnb,nnn)
\end{center}
and $[\bla]$ has three removable $e$-rim hooks. For example, $[((4^2,2,1^1),(6,1))]$ is formed by $[\bla]$ by removing an $e$-rim hook from $[\la^{(1)}]$ and has abacus configuration (with respect to $\mc$)
\begin{center}
  \abacus(bbb,nbb,nbn,nbb,nnn) \qquad \qquad \abacus(bbb,bnb,nnn,nnb,nnn).
\end{center}
When drawing abacuses we truncate the runners and assume that the positions above the drawn portion all contain beads while the positions below are empty. 
\end{ex}

We say that $\bla \in \La^r$ is a multicore if $e=\infty$ or if $e < \infty$ and no component of $\bla$ has any removable $e$-rim hooks; equivalently, $\bla$ is a multicore if no bead in the abacus of any component has an empty space above it. (This is independent of the choice of $\mc$ used to draw the abacus configurations.) We say that a $\sim_{\mc}$-equivalence class $B$ is a core block if every $\bla \in B$ is a multicore. 
If $\bla$ is a multicore and $\mc \in \Z^r$ then for $1 \leq k \leq r$ and $i \in I$ we define $l^{\mc}_{ik}(\bla)$ as follows. If $e$ is finite, set
\[l^{\mc}_{ik}(\bla) = \max\{l \in \Z \mid le+i \in B_{a_k}(\la^{(k)})\};\]
in other words, $l^{\mc}_{ik}(\bla)$ is the lowest row of the abacus configuration for $\la^{(k)}$ with respect to $a_k$ which contains a bead on runner $i$. If $e=\infty$, set $l^{\mc}_{ik}(\bla)$ to be $1$ if $i \in B_{a_k}(\la^{(k)})$ and $0$ otherwise. 

\begin{lemma}[{\cite[Theorem~3.1]{Fayers:Cores}}] \label{L:BaseTuple}
Let $\ms \in \Z^r$ and suppose that $B$ is a $\sim_{\ms}$-equivalence class.
\begin{enumerate}
\item Suppose $e$ is finite. If $B$ is a core block then there exists $\mc=(a_1,a_2,\ldots,a_r) \in \Z^r$, 
with $a_k \equiv s_k \mod e$ for all $1 \leq k \leq r$, 
and $\bb =(b_0,b_1,\ldots,b_{e-1}) \in \Z^e$ such that for each $i \in I$, $1 \leq k \le r$ and $\bla \in B$, $l^{\mc}_{ik}(\bla)$ is equal to either $b_i$ or $b_i+1$. 
\item Suppose $e=\infty$. Then every block is a core block and if we set $\bb=(\ldots,0,0,0,\ldots)$ and take $\mc=\ms$ then for each $i \in I$, $1 \leq k \le r$ and $\bla \in B$, $l^{\mc}_{ik}(\bla)$ is equal to either $b_i$ or $b_i+1$.  
\end{enumerate}
Conversely, suppose that $e$ is finite and that $\bla \in \La^r$. If there exists $\mc=(a_1,a_2,\ldots,a_r) \in \Z^r$, 
with $a_k \equiv s_k \mod e$ for all $1 \leq k \leq r$, 
and $\bb =(b_0,b_1,\ldots,b_{e-1}) \in \Z^e$ such that for each $i \in I$ and $1 \leq k \le r$, $l^{\mc}_{ik}(\bla)$ is equal to either $b_i$ or $b_i+1$, then the $\sim_{\ms}$-equivalence class of $\bla$ is a core block.      
\end{lemma}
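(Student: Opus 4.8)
The plan is to show directly that every $\bmu$ in the block $B$ of $\bla$ is a multicore. First note that the hypothesis presupposes $\bla$ to be a multicore, since $l^{\mc}_{ik}(\bla)$ is only defined in that case; this is essential, as the statement fails otherwise (with $e=2$ the pair $((2),\emp)$ satisfies a width-one condition on the quantities $\max\{l:le+i\in B_{a_k}\}$ yet lies in a non-core block). Also $\sim_{\ms}$ and $\sim_{\mc}$ coincide because $a_k\equiv s_k\pmod e$, so I may work with $\mc$ throughout. The heart of the argument is a minimality property: the base-tuple condition will force $\bla$ to have the smallest possible size among all multicores in $B$, and a rim-hook-removal argument will then show that no member of $B$ can have strictly smaller multicore, hence all of them are already multicores.

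I would next isolate a block invariant recording the runner fill. For $\bmu\in\La^r$ and $j\in I$ set $n_j(\bmu)=\sum_k(\#\{b\in B_{a_k}(\mu^{(k)}):b\equiv j\}-\#\{b<a_k:b\equiv j\})$, the number of beads on runner $j$ relative to the empty multipartition, summed over components. Writing $c_j(\bmu)$ for the number of residue-$j$ nodes of $[\bmu]$, so that $(c_j)_{j\in I}=\Res_{\mc}(\bmu)$, a short telescoping comparison of $B_{a_k}(\mu^{(k)})$ with $B_{a_k}(\emp)$ yields $n_j(\bmu)=c_j(\bmu)-c_{j+1}(\bmu)$ (index mod $e$). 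Thus $n_j$ depends only on the residue multiset and is constant on $B$. For a multicore the beads on each runner are packed, so $n_j(\bmu)=\sum_k(l^{\mc}_{jk}(\bmu)-m_{jk})$, where $m_{jk}$ is the fill level of runner $j$ for the empty partition of charge $a_k$; hence the invariant $n_j$ pins down $\sum_k l^{\mc}_{jk}$ for every multicore in $B$.

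Then I would compute the size of a multicore $\bnu$ from its fill levels $L_{jk}:=l^{\mc}_{jk}(\bnu)$. Using $|\nu^{(k)}|=\sum_{b\in B_{a_k}(\nu^{(k)})}b-\sum_{b\in B_{a_k}(\emp)}b$ and summing the packed beads runner by runner gives $|\bnu|=\tfrac{e}{2}\sum_{j,k}L_{jk}^2+\sum_j(\tfrac e2+j)\sum_kL_{jk}+(\text{a constant depending only on }\mc)$. Once $\mc$ and the invariants $n_j$ are fixed (equivalently the column sums $\sum_kL_{jk}$ are fixed), the linear term is constant, so minimising $|\bnu|$ over multicores of $B$ amounts to minimising $\sum_{j,k}L_{jk}^2$ subject to prescribed column sums. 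For integers of fixed sum this is minimised exactly when, for each $j$, the values $\{L_{jk}\}_k$ lie in a window of width one—precisely the existence of a base tuple. So the hypothesis says exactly that $\bla$ attains the minimal size among multicores in $B$.

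Finally I would run the closure argument. Given $\bmu\in B$, push every bead as high as possible on its runner to reach the multicore $\bmu_0$; by the abacus dictionary this removes $W\ge0$ removable $e$-rim hooks, so $|\bmu_0|=|\bmu|-eW=|\bla|-eW$. Pushing beads up keeps each runner's bead count, so $n_j(\bmu_0)=n_j(\bmu)=n_j(\bla)$ for all $j$, making $\bmu_0$ a multicore with the same invariants as $\bla$. Minimality gives $|\bmu_0|\ge|\bla|$, forcing $W=0$, so $\bmu=\bmu_0$ is itself a multicore; as $\bmu$ was arbitrary, $B$ is a core block. I expect the main obstacle to be the size computation, specifically checking that the term linear in the $L_{jk}$ collapses to a function of the column sums alone—this is what converts the problem into the clean convexity statement that singles out the balanced, base-tuple configuration as the minimiser.
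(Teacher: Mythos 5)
First, a point of reference: the paper does not prove this lemma at all --- it is imported verbatim from Fayers (\cite[Theorem~3.1]{Fayers:Cores}) --- so there is no in-paper argument to compare against. Judged on its own terms, the part of your argument that you actually carry out is correct. The runner-count invariants $n_j=c_j-c_{j+1}$ are indeed block invariants, the size formula $|\bnu|=\tfrac e2\sum_{j,k}L_{jk}^2+\sum_j(\tfrac e2+j)\sum_kL_{jk}+\mathrm{const}(\mc)$ is right (each packed runner contributes a telescoping arithmetic sum), the linear term really does depend only on the runner sums $S_j=\sum_kL_{jk}$, and the convexity step (fixed integer sum, minimal sum of squares iff all entries lie in a width-one window) is standard. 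The closure argument then works: stripping rim hooks from $\bmu\in B$ preserves every $n_j$, so the resulting multicore has the same runner sums as $\bla$ and hence size at least $|\bla|=|\bmu|$, forcing $W=0$. Your observation that the hypothesis tacitly requires $\bla$ to be a multicore is also apt.

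The genuine gap is that you prove only the converse (the final paragraph of the lemma) and say nothing about parts (1) and (2). Part (2) is essentially immediate, but part (1) --- every core block admits a reduced pair --- is the other substantive half and needs its own argument. Your machinery does supply one, but the missing step must be written: if some $\bla\in B$ has $L_{jk}\ge L_{jk'}+2$ for some runner $j$, replace $(L_{jk},L_{jk'})$ by $(L_{jk}-1,L_{jk'}+1)$ to get a multicore with the same $n_j$'s and size smaller by a positive multiple of $e$, then push beads back down to restore the size; the result lies in $B$ (same $n_j$'s and same size pin down the residue multiset) and has a removable $e$-rim hook, contradicting $B$ being a core block. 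This shows every element of $B$ is width-one for any fixed $\mc$ with $a_k\equiv s_k$, and one must then also check that a \emph{single} $\bb$ serves all of $B$ simultaneously --- which follows because all elements of $B$ share the runner sums $S_j$, and a width-one row with sum $S_j$ forces the window $\{\lfloor S_j/r\rfloor,\lfloor S_j/r\rfloor+1\}$. Without these additions the proposal establishes only one direction of the lemma.
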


\subsection{The Ariki-Koike algebra} \label{S:AK}
Let $r\geq 1$ and $n \geq 0$ and let $\mathbb{F}$ be a field of characteristic $p \geq 0$. Choose $q \in \mathbb{F} \, \setminus \, \{0\}$ and ${\bf Q}=(Q_1,\dots,Q_r) \in \mathbb{F}^r$. The Ariki-Koike algebra $\h=\h_{r,n}(q,\bf Q)$ is the unital associative $\mathbb{F}$-algebra with generators $T_0 , \dots, T_{n-1}$ and relations
$$\begin{array}{crcll}
& (T_i +q )(T_i -1) & = & 0, &  \text{ for } 1 \leq i \leq n-1, \\
& T_i T_j & = & T_j T_i, & \text{ for } 0 \leq i,j \leq n-1, |i-j|>1, \\
& T_i T_{i+1} T_i & = & T_{i+1} T_i T_{i+1}, & \text{ for } 1 \leq i \leq n-2,\\
& (T_0 - Q_1)\dots (T_0 - Q_r) & = & 0, & \\
& T_0 T_1 T_0 T_1 & = & T_1 T_0 T_1 T_0. &
\end{array}$$
Define $e \geq 2$ to be minimal such that $1+q+\dots+q^{e-1}=0$, or set $e=\infty$ if no such value exists. 
Two parameters $Q_k$ and $Q_l$ are $q$-connected if $Q_k = q^a Q_l$ for some $a \in I$. Each Ariki-Koike algebra~$\h$ is Morita equivalent to a direct sum of tensor products of smaller algebras whose parameters are all $q$-connected~\cite{DM:Morita} and so we will assume that all our parameters are $q$-connected, in fact, that they are all powers of $q$ where $q\neq 1$. If $\mc = (a_1,a_2,\ldots,a_r) \in \Z^r$ satisfies $Q_k = q^{a_k}$ for all $1 \leq k \leq r$ then we call $\mc$ a multicharge for $\h$. If $e$ is finite then $q^e=1$ so there are infinitely many possible multicharges for $\h$. 

The algebra $\h$ is a cellular algebra~\cite{GL,DJM:CellularBasis} with the cell modules indexed by the $r$-multipartitions of $n$. The cell module $S^{\bla}$ indexed by the multipartition $\bla$ is called a Specht module. Due to the properties of cellular algebras, all the composition factors of $S^{\bla}$ lie in the same block, and so we can think of the Specht modules as being partitioned into blocks. 

\begin{proposition}[{\cite[Theorem~2.11]{LM:Blocks}}] \label{P:Delta}
Suppose that $\mc$ is a multicharge for $\h$ and that $\bla, \bmu \in \La^{r}_n$. Then $S^{\bla}$ and $S^{\bmu}$ lie in the same block of $\h$ if and only if $\Res_{\mc}(\bla)=\Res_{\mc}(\bmu)$. 
\end{proposition}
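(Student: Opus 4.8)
The plan is to route everything through the Jucys--Murphy elements $L_1,\dots,L_n \in \h$ and the commutative (indeed central) subalgebra they generate. In one standard normalisation $L_1 = T_0$ and $L_{m+1}=q^{-1}T_m L_m T_m$, and the point I would exploit is that with respect to a filtration of $S^{\bla}$ by the standard-tableau basis these elements act by eigenvalues $q^{\res_{\mc}(\mathfrak{n})}$, one for each node $\mathfrak{n}\in[\bla]$. Crucially, the resulting multiset $\{q^{\res_{\mc}(\mathfrak{n})} : \mathfrak{n}\in[\bla]\}$ is the same for \emph{every} standard tableau of shape $\bla$, since it is determined by the shape alone; and because $q$ has multiplicative order $e$ and the residues lie in $I$, this multiset carries exactly the same information as $\Res_{\mc}(\bla)$.

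For the forward implication I would use that the elementary symmetric polynomials $z_k = e_k(L_1,\dots,L_n)$ are central in $\h$. As the eigenvalue multiset above is constant across all standard tableaux of shape $\bla$, each $z_k$ acts on $S^{\bla}$ by a triangular matrix with every diagonal entry equal to $c_k(\bla) := e_k\bigl(\{q^{\res_{\mc}(\mathfrak{n})}\}\bigr)$; hence $z_k - c_k(\bla)$ acts nilpotently and $z_k$ acts as the scalar $c_k(\bla)$ on every composition factor of $S^{\bla}$. Now within a single block every central element acts on each indecomposable projective with a single eigenvalue (its image in $\operatorname{End}$ of an indecomposable projective lies in a local ring), so all simple modules in a common block share one central character. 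If $S^{\bla}$ and $S^{\bmu}$ lie in the same block it follows that $c_k(\bla)=c_k(\bmu)$ for all $k$; these are the coefficients of $\prod_{\mathfrak{n}}\bigl(X - q^{\res_{\mc}(\mathfrak{n})}\bigr)$, so they determine the eigenvalue multiset, and injectivity of $i\mapsto q^i$ on $I$ then yields $\Res_{\mc}(\bla)=\Res_{\mc}(\bmu)$.

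For the converse I would produce, for each content $\gamma$ occurring among $\{\Res_{\mc}(\bla) : \bla\in\La^r_n\}$, a central idempotent $f_\gamma\in\h$ by summing the simultaneous generalised-eigenspace projections for the commuting family $(L_1,\dots,L_n)$ over all joint eigenvalues whose residues have content $\gamma$; these are central because the algebra generators permute the joint $(L_1,\dots,L_n)$-eigenspaces within a fixed content. The $f_\gamma$ are orthogonal, sum to $1$, and satisfy $f_\gamma S^{\bla}=S^{\bla}$ exactly when $\Res_{\mc}(\bla)=\gamma$, which already separates Specht modules of different content into different blocks. The substantive step is to show that each $f_\gamma$ is \emph{primitive} as a central idempotent, equivalently that all Specht modules of a fixed content lie in a single block. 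I would prove this by induction on $n$ via the $i$-restriction functors: restriction of a content-$\gamma$ module to $\h_{r,n-1}$ lands in the content-$(\gamma-i)$ pieces, and the branching rules link $S^{\bla}$ to Specht modules obtained by removing nodes, so any two multipartitions of content $\gamma$ can be connected through a chain of Specht modules lying in one block, the connectivity being supplied by the inductive hypothesis.

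The main obstacle is precisely this last primitivity statement. Setting up the Jucys--Murphy eigenvalues, the centrality of the symmetric functions, and the construction of the content idempotents $f_\gamma$ are comparatively formal; but ruling out the possibility that a single content $\gamma$ splits across two genuinely distinct blocks requires the branching-rule linking argument (or, equivalently, an appeal to Grojnowski's analysis of the socles of $i$-restriction for the affine Hecke algebra). I expect the delicate bookkeeping in that induction — verifying that every multipartition of content $\gamma$ is reachable and that each individual link genuinely remains inside one block — to be the hard part of the proof.
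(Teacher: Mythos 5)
First, note that the paper does not prove this proposition at all: it is quoted verbatim from Lyle--Mathas \cite[Theorem~2.11]{LM:Blocks}, so the comparison has to be with the proof in that source. Your forward direction is essentially the standard (and the cited paper's) argument and is sound: the symmetric polynomials $e_k(L_1,\ldots,L_n)$ in the Jucys--Murphy elements are central, act triangularly on the standard basis of $S^{\bla}$ with constant diagonal $e_k(\{q^{\res_{\mc}(\mathfrak{n})}\})$, hence determine the central character of every composition factor of $S^{\bla}$; a common block forces a common central character, and since $i \mapsto q^i$ is injective on $I$ (by the choice of $e$ and $q$) the multiset $\Res_{\mc}(\bla)$ is recovered from the coefficients of $\prod_{\mathfrak{n}}(X-q^{\res_{\mc}(\mathfrak{n})})$. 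Likewise the construction of the central content idempotents $f_\gamma$ is fine and correctly reduces everything to primitivity.

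The converse, however, contains a genuine gap, and it sits exactly where you locate the difficulty. Your proposed induction is circular: the inductive hypothesis at level $n-1$ tells you that $S^{\bla'}$ and $S^{\bmu'}$ (obtained by removing $i$-nodes) lie in one block $B'$ of $\h_{r,n-1}$, but to conclude that $S^{\bla}$ and $S^{\bmu}$ lie in one block of $\h_{r,n}$ you need $i$-induction applied to the single block $B'$ to land in a single block of $\h_{r,n}$ --- and that is precisely the statement ``each content class is one block'' at level $n$ that you are trying to prove. All that $f_i$ gives you for free is that the image lies in a single \emph{content class}; ruling out a splitting of that class into two blocks is the whole theorem. (There is the further combinatorial issue that two multipartitions of the same content need not admit removable nodes of a common residue, so the chain of links is not automatic either.) Making the $i$-induction/restriction route rigorous requires substantial extra input (e.g.\ the categorical $\mathfrak{sl}_2$-analysis of Chuang--Rouquier, or Grojnowski's socle results, used in a way you do not specify). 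The actual proof in \cite{LM:Blocks} takes a different route entirely: it passes to the cyclotomic $q$-Schur algebra and uses the Jantzen sum formula, which exhibits explicit linkages between Weyl/Specht modules whose labels differ by moving rim hooks, and then carries out a lengthy combinatorial induction showing that this ``Jantzen equivalence'' graph is connected on each residue class. So while your overall architecture (central characters plus content idempotents plus a linking argument) is reasonable, the linking step as written would fail, and it is the only step that carries the real content of the theorem.
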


This result explains why we called the $\sim_{\mc}$-equivalence classes blocks: two multipartitions of $n$ are in the same class if and only if the corresponding Specht modules lie in the same block. 

From the properties of cellular algebras, we know that there is a bilinear form on each cell module. If we define $\rad(S^{\bmu})$ to be the radical of the Specht module $S^{\bmu}$ with respect to this bilinear form then if $\mc$ is a multicharge for $\h$ then $\rad(S^{\bmu}) \neq S^{\bmu}$ if and only if $\bmu \in \La^{\mc}_n$ and so
\[\{D^{\bmu} = S^{\bmu}/\rad(S^{\bmu}) \mid \bmu \in \La^{\mc}_n\}\]
is a complete set of non-isomorphic irreducible $\h$-modules. Given $\bla \in \La^r_n$ and $\bmu \in \La^{\mc}_n$ we define $[S^{\bla}:D^{\bmu}]$ to be the multiplicity of the simple module $D^{\bmu}$ as a composition factor of the Specht module $S^{\bla}$. 

Brundan and Kleshchev~\cite{BK:Blocks} have shown that the Ariki-Koike algebras are isomorphic to certain graded algebras defined by Khovanov and Lauda~\cite{KhovLaud:diagI,KhovLaud:diagII} and by Rouquier~\cite{Rouquier}: the cyclotomic KLR algebras of type $A$. Through this isomorphism, we may think of $\h$ as being graded. There is a grading on the Specht modules~\cite{BKW}, thus we can define $[S^{\bla}:D^{\bmu}]_v \in \N[v,v^{-1}]$ to be the graded multiplicity of the simple module $D^{\bmu}$ as a composition factor of the Specht module $S^{\bla}$; we recover the original decomposition number by setting $v=1$. For more details, we refer the reader to the survey paper~\cite{Kleshchev:Survey}. 

We define the (graded) decomposition matrix of $\h$ to be the matrix whose rows are indexed by the elements of $\La^r_n$ and whose columns are indexed by the elements of $\La^{\mc}_n$ with entries equal to $[S^{\bla}:D^{\bmu}]_v$, for $\bla \in \La^r_n$ and $\bmu \in \La^{\mc}_n$. If $B$ is a $\sim_{\mc}$-equivalence class, where $\bla \in B$ is such that $|\bla|=n$, we define the block decomposition matrix of $B$ to be the submatrix of the decomposition matrix of $\h$ whose rows and columns are indexed only by elements of $B$. 

\section{Core blocks when $r=2$} \label{S:r2}
For the rest of this paper, we assume $r=2$ so that our Specht modules are indexed by bipartitions and we shall refer to multicores as bicores. 
We fix $e \in \{2,3,\ldots\} \cup \{\infty\}$ and $\ms =(s_1,s_2) \in I^2$. We shall take a block to mean an $\ms$-equivalence class of $\La^2$ and a core block to be a block $B$ in which each $\bla \in B$ is a bicore. 

Let $\mathbb{F}$ be a field of characteristic $p \geq 0$. Fix $q \in \mathbb{F}$ where $q$ is a primitive $e^{\text{th}}$ root of unity if $e$ is finite and $q^l \neq 0,1$ for any $l \in \Z$ otherwise. Let ${\bf Q}=(q^{s_1},q^{s_2})$. 
If $B$ is a block then there exists $n\geq 0$ such that $|\bla|=n$ for all $\bla \in B$ and so we associate to $B$ the block of the Hecke algebra $\h_n=\h_{2,n}(q,{\bf Q})$ containing the Specht modules $S^{\bla}$ for $\bla \in B$. We denote this corresponding block of $\h_n$ by $\hat{B}$ so that we have a correspondence $B \leftrightarrow \hat{B}$ between $\ms$-equivalence classes and $2$-sided indecomposable ideals of the Hecke algebras $\h_n$; we refer to both as blocks. 

\subsection{Sign sequences} \label{S:SS}
 Set 
\[\Delta = \{\delta=  (\delta_i)_{i \in I} \mid \delta_i \in \{-,0,+\} \text{ and if } e=\infty \text{ then } \delta_i = \delta_{-i}=0 \text{ for all } i \gg 0\}.\]
We say that $\delta,\delta' \in \Delta$ are essentially the same if the two sequences obtained by removing all the $0$s from each of them are the same. 

For $\delta \in \Delta$, we define sets $\R(\delta) \in \Delta$ and $\St(\delta) \subseteq I^2$ as follows. 

\begin{enumerate}
\item Take $\R(\delta)=\delta$ and $\St(\delta)=\emptyset$. 
\item If there do not exist $i,j \in I$ with $i < j$ and $\delta_i=-$ and $\delta_j=+$, end the process. Return $\R(\delta)$ and $\St(\delta)$. 
\item Otherwise, choose $i,j \in I$ with $i < j$ and $\delta_i=-$ and $\delta_j=+$ with the property that $\delta_m=0$ for all $i < m < j$. 
Add $(i,j)$ to $\St(\delta)$ and set $\R(\delta)_i = \R(\delta)_j = 0$. Go back to step (2). 
\end{enumerate}

\begin{ex}
Suppose that $e=19$ and $\delta = (-,0,-,0,+,-,0,0,+,-,+,+,0,+,-,0,+,0,+)$. We find it helpful to draw a diagram for $\delta$ as below. 

\begin{center}
\begin{tikzpicture}[xscale=.8]
\foreach \k in {0,2,5,9,14} {\node at (\k,1.7){$-$};}
\foreach \k in {1,3,6,7,12,15,17} {\node at (\k,1.7){$0$};}
\foreach \k in {4,8,10,11,13,16,18} {\node at (\k,1.7){$+$};}
\foreach \k in {0,1,...,18} {\node at (\k,1.2){$\k$};}
\draw (2,2) parabola[parabola height=.4cm] +(2,0);
\draw (5,2) parabola[parabola height=.5cm] +(3,0);
\draw (9,2) parabola[parabola height=.25cm] +(1,0);
\draw (14,2) parabola[parabola height=.4cm] +(2,0);
\draw (0,2) parabola[parabola height=1cm] +(11,0);
\end{tikzpicture}
\end{center}

Then 
\begin{align*}
\St(\delta) & = \{(2,4), (5,8), (9,10),(0,11),(14,16)\}, \\
\R(\delta) & = (0,0,0,0,0,0,0,0,0,0,0,0,0,+,0,0,0,0,+).
\end{align*}
\end{ex}
Set
\[\Delta_0 = \{ \delta \in \Delta \mid - \notin \R(\delta) \text{ or } + \notin \R(\delta)\}.\]
Suppose $\delta \in \Delta_0$. For each $S = \{(i_1,j_1),\ldots,(i_t,j_t)\} \subseteq \St(\delta)$, define $\delta^S \in \Delta$ by setting  
\[\delta^S_m = \begin{cases} +, & m=i_l \text{ for some } 1 \leq l \leq t, \\
-, & m=j_l \text{ for some } 1 \leq l \leq t, \\
\delta_m, & \text{otherwise}.
\end{cases}\]

If $\delta'=\delta^S$ for some $S \subseteq \St(\delta)$, we write $\delta' \ra \, \delta$ and set
$\ell(\delta',\delta) = |S|$. We write $\delta' \rra \, \delta$ if  $\delta'=\delta^S$ for some $S \subseteq \St(\delta)$ where $S$ has the property that if $(i_1,j_1),(i_2,j_2) \in \St(\delta)$ with $i_1 < i_2 < j_2 < j_1$ then $(i_1,j_1) \in S \implies (i_2,j_2) \in S$.

\begin{ex}
Let $e=8$ and $\delta = (-,-,+,-,-,+,+,-) \in \Delta_0$. We write $\delta$ as 
\begin{center}
\begin{tikzpicture}[xscale=0.8]
\foreach \k in {0,1,3,4,7} {\node at (\k,1.7){$-$};}
\foreach \k in {2,5,6} {\node at (\k,1.7){$+$};}
\foreach \k in {0,1,...,7} {\node at (\k,1.2){$\k$};}
\draw (1,2) parabola[parabola height=.2cm] +(1,0);
\draw (4,2) parabola[parabola height=.2cm] +(1,0);
\draw (3,2) parabola[parabola height=.4cm] +(3,0);
\end{tikzpicture}
\end{center}
so that $\St(\delta)=\{(1,2),(4,5),(3,6)\}$. Then we have 
\[\begin{array}{|c|c|c|c|} \hline 
\raisebox{-2pt}{$S$} & \raisebox{-2pt}{$\delta^S$} & \raisebox{-2pt}{$\ell(\delta^S,\delta)$} & \raisebox{-2pt}{$\delta^S \rra \, \delta$} \\ [2pt] \hline 
\emptyset & (-,-,+,-,-,+,+,-) & 0 & \checkmark \\
\{(1,2)\} & (-,+,-,-,-,+,+,-) & 1 & \checkmark  \\
\{(4,5)\} & (-,-,+,-,+,-,+,-) & 1  & \checkmark  \\
\{(3,6)\} & (-,-,+,+,-,+,-,-) & 1 & \quad \\ 
\{(1,2),(4,5)\} & (-,+,-,-,+,-,+,-) & 2 &  \checkmark \\
\{(1,2),(3,6)\} & (-,+,-,+,-,+,-,-) & 2 &\quad \\
\{(4,5),(3,6)\} & (-,-,+,+,+,-,-,-) & 2  &  \checkmark \\
\{(1,2),(4,5), (3,6)\} & (-,+,-,+,+,-,-,-) & 3 &  \checkmark \\ \hline
\end{array}\]
\end{ex}

Informally, we can see that each element of the set $\{\delta^{S} \mid S \subseteq \St(\delta)\}$ is obtained by swapping some pairs $-+$ that lie at either end of an arc in the diagram of $\delta$. 

\subsection{Indexing the block decomposition matrices corresponding to core blocks} \label{S:DecompMat}
Let $B$ be a core block. Suppose $e$ is finite. By Lemma~\ref{L:BaseTuple}, we can find $\mc=(a_1,a_2) \in \Z^2$ and $\bb=(b_0,b_1,\ldots,b_{e-1}) \in \Z^e$ with the property that $a_k \equiv s_k \mod e$ for $k=1,2$ and for any $\bla \in B$ we have $l^{\mc}_{ik}(\bla)=b_i$ or $l^{\mc}_{ik}(\bla)=b_i+1$, for $i \in I$ and $k=1,2$. If $\mc,\bb$ satisfy these conditions we call $\re=(\mc,\bb)$ a reduced pair for $B$.  We define a total order $\prec$ (which depends on the choice of $\bb$) on $I$ by saying that $i \prec j$ if $b_i < b_j$ or if $b_i = b_j$ and $i<j$. 
If $I=\{i_0,i_1,\ldots,i_{e-1}\}$ with $i_0 \prec i_1 \prec \ldots \prec i_{e-1}$ we take $\pi=\pi_{\bb}(B)$ be the permutation that sends $j$ to $i_j$ for $j \in I$. 
 If $e=\infty$, we define $\re=(\ms,\bz)$ where $\bz=(\ldots,0,0,0,\ldots)$ to be the unique reduced pair for $B$. We then take $\prec$ to be the usual total order $<$ on $\Z$ and $\pi$ to be the identity permutation on $\Z$. 

For $\bla \in B$ and $\re=(\mc,\bb)$ a reduced pair for $B$, define $\de^{\re}_{\bla} = ((\delta^{\re}_{\bla})_i)_{i \in I}$ by setting
\[(\delta^{\re}_{\bla})_i = l^{\mc}_{\pi(i)2}(\bla)-l^{\mc}_{\pi(i)1}(\bla)\]
for all $i \in I$. Our choice of $\re$ ensures that $\delta^{\re}_{\bla} \in \Delta$, where we abuse notation by identifying $1$ with $+$ and $-1$ with $-$. 

If $e$ is finite, the reduced pair $\re$ is not uniquely determined by the conditions above. However we shall see that unless $|B|=1$, $\delta^{\re}_{\bla}$ and $\delta^{\re'}_{\bla}$ are essentially the same for any reduced pairs $\re$ and $\re'$. 

\begin{ex} Take $e=7$ and $\ms=(1,6)$. Let $B$ be the core block containing the bipartition $$\bla=((13,10,8,7,6,4,3,2,1^6),((13,12,10,9,8,6,5^3,3,2^3,1^6)).$$ 
If $\re=(\mc,\bb)$ is a reduced pair for $B$ then there exists $l \in \Z$ such that $\mc$ is of the form $\mc=(8+7l,6+7l)$. 
\begin{center}
\abacus(bbbbbbb,bbbbbbb,bnbbbbb,bnbnbnb,nnbnbnb,nnbnnnb)
\qquad \qquad 
\abacus(bbbbbbb,bnbbbbb,bnbbbnb,nnbbbnb,nnbnbnb,nnbnbnn)
\end{center}

Taking $\mc=(29,27)$ we have two choices for $\bb$, each of which gives a different ordering $\prec$: 
\begin{align*}
\bb & =(3,1,6,3,5,2,5) && \implies 1 \prec 5 \prec 0 \prec 3 \prec 4 \prec 6 \prec 2 & \implies \delta^{\re}_{\bla} &= (-,-,-,+,+,-,0), \\
\bb& =(3,1,5,3,5,2,5) && \implies 1 \prec 5 \prec 0 \prec 3 \prec 2 \prec 4 \prec 6 & \implies \delta^{\re}_{\bla} &= (-,-,-,+,0,+,-). 
\end{align*}
However, the only choice comes when we look at the third runner on the abacus configurations, that is, when $l^{\mc}_{22}(\bla)=l^{\mc}_{21}(\bla)=6$, and we note that the two choices for $\delta^{\re}_{\bla}$ are essentially the same. 
\end{ex}

\begin{lemma}[{\cite[Propn.~3.7]{Fayers:Cores}}] \label{L:GenBlock}
Suppose $B$ is a core block and $\re=(\mc,\bb)$ is a reduced pair for $B$. Let $\bla \in B$ and suppose
$i,j \in I$ with $(\delta^{\re}_{\bla})_i=-$ and $(\delta^{\re}_{\bla})_j=+$.
Define $s_{ij}(\bla)$ to be the bicore $\bmu$ with
\begin{align*}
l^{\mc}_{mk}(\bmu) & = \begin{cases}
l^{\mc}_{mk}(\bla) - 1, & \pi(m)=i \text{ and } k=1 \text{ or } \pi(m)=j \text{ and } k=2, \\
l^{\mc}_{mk}(\bla)+1, & \pi(m)=j \text{ and } k=1 \text{ or } \pi(m)=i \text{ and } k=2,\\
l^{\mc}_{mk}(\bla), & \text{otherwise}.
\end{cases}
\intertext{Then $\bmu \in B$ and} 
 (\delta^{\re}_{\bmu})_m & = \begin{cases}
+, &  m=i,\\
-, & m=j,\\
(\delta^{\re}_{\bla})_m, & \text{otherwise}.
\end{cases}
\end{align*}
Moreover, if $\bnu \in B$ then we may form $\bnu$ from $\bla$ by repeatedly applying operations of the form $s_{ij}$ for some $i,j$ as above. 
\end{lemma}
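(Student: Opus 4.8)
The plan is to work entirely on the abacus relative to the reduced pair $\re=(\mc,\bb)$, using the fact that a bicore is completely determined by its data $l^{\mc}_{mk}$: on runner $m$ of component $k$ the beads occupy exactly the rows $\le l^{\mc}_{mk}$. First I would check that $\bmu$ is well defined. The prescription only alters the two runners $\pi(i)$ and $\pi(j)$, changing $(l^{\mc}_{\pi(i)1},l^{\mc}_{\pi(i)2})$ from $(b_{\pi(i)}+1,b_{\pi(i)})$ to $(b_{\pi(i)},b_{\pi(i)}+1)$ and $(l^{\mc}_{\pi(j)1},l^{\mc}_{\pi(j)2})$ from $(b_{\pi(j)},b_{\pi(j)}+1)$ to $(b_{\pi(j)}+1,b_{\pi(j)})$, and leaving every other runner fixed. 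Hence each $l^{\mc}_{mk}(\bmu)$ still lies in $\{b_m,b_m+1\}$, so $\bmu$ is a bicore with reduced pair $\re$, and the displayed formula for $\delta^{\re}_{\bmu}$ is immediate from $(\delta^{\re}_{\bmu})_m=l^{\mc}_{\pi(m)2}(\bmu)-l^{\mc}_{\pi(m)1}(\bmu)$.

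The heart of the claim $\bmu\in B$ is a single residue computation, which I would isolate as a sublemma: relocating one bead, within a single component, from an occupied position $p$ to an empty position $p'$ changes the residue multiset of that component by $\{x\bmod e\mid p<x\le p'\}$ when $p<p'$ (and by the negative of $\{x\bmod e\mid p'<x\le p\}$ when $p'<p$), and this change depends only on $p$ and $p'$, not on where the other beads lie. The relocation changes the size of the component by exactly $p'-p$, and telescoping the change through the intervening positions identifies the added residues as those of the integers in the interval; the point to stress is that the intervening beads are irrelevant to the residue multiset. Now writing $P=(b_{\pi(i)}+1)e+\pi(i)$ and $Q=(b_{\pi(j)}+1)e+\pi(j)$, the operation $s_{ij}$ relocates a bead from $P$ to $Q$ in component $1$ and a bead from $Q$ to $P$ in component $2$. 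These two relocations are mutual inverses, so by the sublemma their contributions to $\Res_{\mc}$ cancel exactly, giving $\Res_{\mc}(\bmu)=\Res_{\mc}(\bla)$ and hence $\bmu\in B$ by Proposition~\ref{P:Delta}.

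For the final assertion I would show that $B$ is a single orbit under the $s_{ij}$. The operation $s_{ij}$ preserves the unordered pair $\{l^{\mc}_{m1},l^{\mc}_{m2}\}$ on every runner $m$ (equivalently $D_m:=l^{\mc}_{m1}+l^{\mc}_{m2}$), preserves the set of runners on which $\delta^{\re}\ne 0$, and, since $\mc$ is fixed, preserves the number $h_k$ of runners with $l^{\mc}_{mk}=b_m+1$ in each component $k$; indeed it merely interchanges which component carries the ``high'' bead at the runner $\pi(i)$, where $(\delta^{\re}_{\bla})_i=-$, and at the runner $\pi(j)$, where $(\delta^{\re}_{\bla})_j=+$. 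Conversely, given $\bnu\in B$, the sublemma yields a relation $\Res_{\mc}(\bla)-\Res_{\mc}(\bnu)=\sum_{m}(D_m(\bla)-D_m(\bnu))\Psi_m$, where $\Psi_m$ is the element of the free abelian group on $I$ recording the residue content of an interval terminating at $(b_m+1)e+m$. Since the counts $h_1,h_2$ agree for $\bla$ and $\bnu$ we have $\sum_m(D_m(\bla)-D_m(\bnu))=0$, so this is a relation among the differences $\Psi_m-\Psi_{m'}$; I would prove these linearly independent, forcing $D_m(\bla)=D_m(\bnu)$ for every $m$. Then $\bla$ and $\bnu$ have the same runner multisets and the same number of $+$'s, so they differ only in the arrangement of $+$'s and $-$'s over the fixed set of nonzero runners, and any two such arrangements with equal numbers of each sign are joined by transpositions exchanging one $+$ with one $-$; each such transposition is an $s_{ij}$ landing in $B$ by the previous paragraph, completing the induction.

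The step I expect to be the genuine obstacle is the linear independence of the $\Psi_m$, equivalently the statement that the residue content of a reduced-pair bicore determines the runner sums $(D_m)$. Everything else is either direct abacus bookkeeping or the formal ``transpositions generate the orbit'' argument; but extracting $(D_m)$ from $\Res_{\mc}$ requires controlling the residue contents of the intervals between the positions $(b_m+1)e+m$ precisely enough to exclude a nontrivial integer relation, and it is here that the combinatorics of $\bb$ and the order $\prec$ really enter.
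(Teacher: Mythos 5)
This lemma is not proved in the paper at all: it is quoted directly from \cite{Fayers:Cores}, so there is no internal argument to compare yours against, and your proposal stands or falls on its own. The first two thirds of it are correct and essentially complete. The abacus bookkeeping showing that $\bmu$ is a bicore with the same reduced pair, and the formula for $\delta^{\re}_{\bmu}$, are immediate as you say. Your sublemma is also right: sorting the $\beta$-numbers and telescoping shows that replacing $p$ by $p'$ in a $\beta$-set changes the residue multiset by exactly the residues of the integers in the interval between them, independently of the intervening beads; and since the node created by pushing a bead from position $z-1$ to $z$ has residue $z \bmod e$ in either component, the two relocations $P \to Q$ and $Q \to P$ contribute cancelling multisets, giving $\Res_{\mc}(\bmu)=\Res_{\mc}(\bla)$ and hence $\bmu \in B$ by Proposition~\ref{P:Delta}.

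The gap is exactly where you flagged it, and as written it is a genuine gap: the whole content of the connectivity claim is that $\Res_{\mc}$ determines the runner sums $D_m$, and you do not prove it. (Note also that the statement you need is not linear independence of all the differences $\Psi_m-\Psi_{m'}$ --- there are $\binom{e}{2}$ of those in an $(e-1)$-dimensional span, so that is false --- but affine independence of the $\Psi_m$: that $\sum_m c_m\Psi_m=0$ together with $\sum_m c_m=0$ forces every $c_m=0$.) The step is provable and not deep. Order the runners $m_0\prec\cdots\prec m_{e-1}$ and set $N_m=(b_m+1)e+m$, so that $N_{m_0}<\cdots<N_{m_{e-1}}$; Abel summation rewrites $\sum_m c_m\Psi_m$ as $\sum_{u=1}^{e-1}C_u\theta_u$ with $C_u=\sum_{t\geq u}c_{m_t}$ and $\theta_u$ the residue content of the integer interval $(N_{m_{u-1}},N_{m_u}]$. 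Each $\theta_u$ is a multiple of the all-ones vector plus the indicator of the cyclic interval $(m_{u-1},m_u]$ in $\Z/e\Z$, so applying the difference operator $f\mapsto (i\mapsto f(i)-f(i-1))$ kills the constant part and sends $\theta_u$ to $e_{m_{u-1}+1}-e_{m_u+1}$; since the $m_t+1$ are distinct, reading off coefficients gives $C_1=0$ and $C_{u+1}=C_u$, hence all $c_m=0$. You should also justify, for an arbitrary $\bnu\in B$ rather than only for the image of an $s_{ij}$, that $h_k(\bnu)=h_k(\bla)$: this holds because the fixed multicharge entry $a_k$ determines $\sum_m l^{\mc}_{mk}$ exactly, not merely modulo $e$, and this identity is also what gives you $\sum_m c_m=0$. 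With those two points supplied, your final ``transpositions generate the orbit'' step goes through, since each intermediate configuration lies in $B$ by the forward direction already established.
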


\begin{corollary} \label{C:GenBlock}
Suppose that $B$ is a core block and $\re$ is a reduced pair for $B$. Let $\bla,\bmu \in B$. Then $\delta^{\re}_{\bmu}$ can be formed from $\delta^{\re}_{\bla}$ by permuting the entries equal to $\pm$. Conversely, any sequence $\epsilon \in \Delta$ formed by permuting the entries equal to $\pm$ in $\delta^{\re}_{\bla}$ is equal to $\delta^{\re}_{\bnu}$ for some $\bnu \in B$. 

Consequently, $|B|=1$ if and only if $- \notin \delta^{\re}_{\bla}$ or $+ \notin \delta^{\re}_{\bla}$.  
\end{corollary}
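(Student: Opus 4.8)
The plan is to derive everything from Lemma~\ref{L:GenBlock}, whose content is precisely that the block $B$ is generated from any single element by the moves $s_{ij}$, and that at the level of sign sequences each such move is the transposition of a $-$ in position $i$ with a $+$ in position $j$, leaving every other entry—in particular every $0$—fixed. For the forward implication, given $\bmu \in B$, the final clause of Lemma~\ref{L:GenBlock} lets me write $\bmu = s_{i_t j_t}\cdots s_{i_1 j_1}(\bla)$ for some chain of admissible moves. By the description of $(\delta^{\re}_{\cdot})$ in that lemma, each move alters the sign sequence only by exchanging the symbols in two positions, one carrying $-$ and the other $+$; it never creates, destroys, or relocates a $0$. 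Composing, $\delta^{\re}_{\bmu}$ is obtained from $\delta^{\re}_{\bla}$ by a permutation of the positions that fixes the set of $0$-positions and merely rearranges the $\pm$ entries among themselves, which is exactly the assertion. A byproduct, needed below, is that the positions of the $0$s—and hence the numbers of $+$s and of $-$s—are invariants of $B$.

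For the converse I would argue by a sorting induction. Let $\epsilon \in \Delta$ be any rearrangement of the $\pm$ entries of $\delta^{\re}_{\bla}$, and let $d(\beta,\epsilon)$ denote the number of positions at which a sequence $\beta$ disagrees with $\epsilon$. Since $\epsilon$ and $\delta^{\re}_{\bla}$ share their $0$-positions and carry the same number of $+$s and of $-$s, whenever a current sequence $\delta^{\re}_{\bnu}$ (with $\bnu \in B$) differs from $\epsilon$ there must be a position $i$ with $(\delta^{\re}_{\bnu})_i = -$, $\epsilon_i = +$ and a position $j$ with $(\delta^{\re}_{\bnu})_j = +$, $\epsilon_j = -$, the two defect sets having equal size. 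Applying $s_{ij}$—admissible exactly because of these signs—produces $s_{ij}(\bnu) \in B$ and lowers $d(\cdot,\epsilon)$ by $2$. Iterating terminates at some $\bnu \in B$ with $\delta^{\re}_{\bnu} = \epsilon$.

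The consequence then follows without any injectivity of $\bla \mapsto \delta^{\re}_{\bla}$. If $\delta^{\re}_{\bla}$ contains no $-$ or no $+$, then no move $s_{ij}$ is admissible at $\bla$, so by the final clause of Lemma~\ref{L:GenBlock} the only element reachable from $\bla$ is $\bla$ itself, giving $B = \{\bla\}$. Conversely, if both a $-$ and a $+$ occur, I apply one admissible $s_{ij}$ to obtain $\bmu \in B$ with $\delta^{\re}_{\bmu} \neq \delta^{\re}_{\bla}$, whence $\bmu \neq \bla$ and $|B| \geq 2$.

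The only point requiring care is the sorting step in the converse: one must check that the available moves, which only ever swap an unequal pair $(-,+)$ in a fixed orientation, suffice to realize an arbitrary rearrangement of the $\pm$ entries. This is immediate from the equality of the $\pm$ counts established in the forward direction, so I expect no genuine obstacle; the whole argument is essentially bookkeeping layered on top of Lemma~\ref{L:GenBlock}.
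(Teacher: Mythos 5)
Your proposal is correct and follows exactly the route the paper intends: the corollary is stated without proof as an immediate consequence of Lemma~\ref{L:GenBlock}, and your argument (each $s_{ij}$ transposes a $-$ with a $+$ and fixes the $0$s, plus a sorting induction for the converse and the observation that no move is admissible when one sign is absent) is the natural filling-in of that deduction. No gaps; the only detail worth noting is that Lemma~\ref{L:GenBlock} does not require $i<j$, so the swap you need in the sorting step is always admissible.
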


\begin{lemma} \label{L:DeltaSame}
Suppose $e$ is finite and that $B$ is a core block with $|B|>1$.  Suppose that $\re=(\mc,\bb)$ and $\re'=(\mc',\bb')$ are reduced pairs for $B$ with $\mc=(a_1,a_2)$ and $\mc'=(a_1',a_2')$. Let $\bla \in B$.
\begin{enumerate}
\item There exists $d\in \Z$ with $a'_k = a_k +de$ for $k=1,2$.  
\item The sequences $\delta^{\re}_{\bla}$ and $\delta^{\re'}_{\bla}$ are essentially the same.
\end{enumerate}
\end{lemma}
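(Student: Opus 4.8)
The plan is to translate everything onto the abacus and exploit the fact that shifting the charge $\mc$ by a multiple of $e$ merely translates rows. First I would record the basic effect: since $a_k \equiv s_k \equiv a_k' \pmod{e}$, I can write $a_k' = a_k + d_k e$ for some $d_k \in \Z$, $k=1,2$, and then from $B_{a_k'}(\la^{(k)}) = \{\beta + d_k e \mid \beta \in B_{a_k}(\la^{(k)})\}$ conclude that $l^{\mc'}_{ik}(\bla) = l^{\mc}_{ik}(\bla) + d_k$ for every runner $i$. For part~(1) I would use that $|B|>1$ forces, via Corollary~\ref{C:GenBlock}, both a $-$ and a $+$ to occur in $\delta^{\re}_{\bla}$: there are runners $a,b \in I$ with $l^{\mc}_{a2}(\bla) - l^{\mc}_{a1}(\bla) = -1$ and $l^{\mc}_{b2}(\bla) - l^{\mc}_{b1}(\bla) = +1$. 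Since $\re'$ is a reduced pair, $l^{\mc'}_{a1}(\bla)$ and $l^{\mc'}_{a2}(\bla)$ both lie in the two-element set $\{b_a', b_a'+1\}$, so their difference has absolute value at most $1$; computing this difference as $-1 + (d_2 - d_1)$ gives $0 \le d_2 - d_1 \le 2$, and the analogous computation at runner $b$ gives $-2 \le d_2 - d_1 \le 0$. Hence $d_2 = d_1$, which is exactly part~(1) with $d := d_1$.

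For part~(2) the key observation I would isolate is that on each runner $i$ with $l^{\mc}_{i2}(\bla) \neq l^{\mc}_{i1}(\bla)$ the base value $b_i$ is forced. Indeed, if $l^{\mc}_{i2}(\bla) - l^{\mc}_{i1}(\bla) = \pm 1$ while both rows lie in $\{b_i, b_i+1\}$, then necessarily $\{l^{\mc}_{i1}(\bla), l^{\mc}_{i2}(\bla)\} = \{b_i, b_i+1\}$, so $b_i = \min(l^{\mc}_{i1}(\bla), l^{\mc}_{i2}(\bla))$; the same argument applied to $\re'$ gives $b_i' = \min(l^{\mc'}_{i1}(\bla), l^{\mc'}_{i2}(\bla)) = b_i + d$ by part~(1). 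Two consequences follow. First, the difference $l^{\mc}_{i2}(\bla) - l^{\mc}_{i1}(\bla)$ is independent of the reduced pair, since it changes by $d_2 - d_1 = 0$; thus the nonzero signs, together with the runner carrying each, are the same for $\re$ and $\re'$. Second, because $b_i' = b_i + d$ for all such runners, the orders $\prec$ and $\prec'$ restrict to the same total order on the set of nonzero runners: both the strict comparisons $b_i < b_j$ and the tie-breaks by runner index are preserved under adding the constant $d$. Therefore reading the nonzero signs of $\delta^{\re}_{\bla}$ in $\pi$-order and of $\delta^{\re'}_{\bla}$ in $\pi'$-order produces the same sequence, and since deleting the $0$s is exactly this operation, $\delta^{\re}_{\bla}$ and $\delta^{\re'}_{\bla}$ are essentially the same.

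I expect the only real subtlety to be part~(1): the reduction $l^{\mc'}_{ik} = l^{\mc}_{ik} + d_k$ and the abacus bookkeeping are routine, but one has to notice that it is precisely the simultaneous presence of a $-$ and a $+$---guaranteed by the hypothesis $|B|>1$ through Corollary~\ref{C:GenBlock}---that squeezes $d_2 - d_1$ to $0$ from both sides; with only one sign present the argument genuinely fails, which is why the hypothesis is needed. Once $d_1 = d_2$ is in hand, part~(2) is essentially forced by the observation that $b_i$ is determined on nonzero runners, and the remaining work is just checking that the zero entries, which may sit at either $b_i$ or $b_i+1$, are harmlessly discarded when passing to essential equality.
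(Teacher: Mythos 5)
Your proposal is correct and follows essentially the same route as the paper: part (1) uses the presence of both a $-$ and a $+$ (from $|B|>1$) together with the constraint $|l^{\mc'}_{m2}(\bla)-l^{\mc'}_{m1}(\bla)|\leq 1$ to pin down $d_2=d_1$, and part (2) rests on the same key observation that $b_i$ is forced to equal $\min\{l^{\mc}_{i1}(\bla),l^{\mc}_{i2}(\bla)\}$ on runners with a nonzero sign, so the restricted order $\prec$ on those runners is independent of the reduced pair. The only cosmetic difference is that the paper passes through an intermediate reduced pair $(\mc',(b_0+d,\ldots,b_{e-1}+d))$ before comparing with $(\mc',\bb')$, whereas you compare the two orders directly.
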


\begin{proof}
\quad 
\begin{enumerate}
\item Since $\re$ and $\re'$ are both reduced pairs for $B$, there exist $d_1,d_2 \in \Z$ such that $a_k'=a_k + d_ke$ for $k=1,2$. As $|B|>1$, there exist $i,j \in I$ such that 
\begin{align*}
& l^{\mc}_{i2}(\bla) - l^{\mc}_{i1}(\bla) = -1 && \implies & & l^{\mc'}_{i2}(\bla) - l^{\mc'}_{i1}(\bla) = d_2-d_1 -1, \\ 
& l^{\mc}_{j2}(\bla) - l^{\mc}_{j1}(\bla) = 1 && \implies && l^{\mc'}_{j2}(\bla) - l^{\mc'}_{j1}(\bla) = d_2-d_1+1.
\end{align*} 
so since $|l^{\mc'}_{m2}(\bla) - l^{\mc'}_{m1}(\bla)|\leq 1$ for all $m \in I$, we must have $d_1=d_2$. 
\item 
Using part (1) we may assume that $a'_k=a_k+de$ for $k=1,2$. If we set ${\mathbf c}=(b_0+d,b_1+d,\ldots,b_{e-1}+d)$ then $\hat{\re}=(\mc',{\mathbf c})$ is a reduced pair for $B$ and $\delta^{\hat{\re}}_{\bla}=\delta^{\re}_{\bla}$. 
Let
\[I^{\pm} = \{i \in I \mid  l^{\mc'}_{i2}(\bla) \neq l^{\mc'}_{i1}(\bla)\},\]
so that $(\delta^{\hat{\re}}_{\bla})_i \ne 0$ if and only if $i \in I^{\pm}$. 
If $i \in I^{\pm}$ then we must have $b'_i = \min\{l^{\mc'}_{i2}(\bla),l^{\mc'}_{i1}(\bla) \}=c_i$. So restricting the order $\prec$ to the set $I^{\pm}$ we see that changing $(\mc',{\mathbf c})$ to $(\mc',\bb')$ permutes the elements of 
$\delta^{\hat{\re}}_{\bla}$ while keeping the non-zero entries in the same order. 
Hence $\delta^{\hat{\re}}_{\bla}$ and $\delta^{\re'}_{\bla}$ are essentially the same. 
\end{enumerate}
\end{proof}

If $|B|=1$, Lemma~\ref{L:DeltaSame} does not hold. If $e$ is finite and $B=\{\bla\}$, assume that we choose the reduced pair $\re$ such that $0 \in \delta^{\re}_{\bla}$ and $1 \notin \delta^{\re}_{\bla}$. This choice is arbitrary; it is simply made so that for every element of a core block we have an expression $\delta^{\re}_{\bla}$ which is essentially independent of the choice of the reduced pair $\re$. Recall that if $e=\infty$, there is a unique reduced pair for $B$.
Unless we need to emphasise the reduced pair, we will henceforth write $\delta_{\bla}$ instead of $\delta^{\re}_{\bla}$. 

Suppose that $B$ is a  core block. Take $\bla \in B$ and hence define 
\[n_B=\#\{i \in I \mid (\delta_{\bla})_i = -\}, \qquad p_B=\#\{i \in I \mid (\delta_{\bla})_i = +\}, \qquad m_B = \min\{n_B,p_B\}.\]
Given Corollary~\ref{C:GenBlock} and the discussion above, these parameters are well-defined and independent of the choice of $\bla$. 

\begin{lemma}
There exists $d \in \Z$ such that if $e$ is finite (resp. $e=\infty$) then $s_1 \equiv n_B +d \mod e$ and $s_2 \equiv p_B+d \mod e$ (resp. $s_1=n_B+d$ and $s_2=p_B+d$).\end{lemma}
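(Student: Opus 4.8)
The plan is to read the charges $a_1,a_2$ off the abacus and show that their difference equals $p_B-n_B$; the statement then follows by elementary arithmetic. First I would fix $\bla\in B$ together with a reduced pair $\re=(\mc,\bb)$ for $B$, write $\mc=(a_1,a_2)$, and abbreviate $L_{ik}=l^{\mc}_{ik}(\bla)$. Recall that $a_k\equiv s_k\pmod e$ when $e$ is finite, while $a_k=s_k$ when $e=\infty$ (since then $\re=(\ms,\bz)$).

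The key step is the identity
\begin{equation*}
a_2-a_1=\sum_{i\in I}\bigl(L_{i2}-L_{i1}\bigr).\tag{$\ast$}
\end{equation*}
To prove $(\ast)$ I would count the charge of each $\beta$-set. Because $\bla$ is a bicore, no bead of the abacus of $\la^{(k)}$ (drawn with respect to $a_k$) has an empty position above it, so on runner $i$ the occupied rows are exactly the $l\in\Z$ with $l\le L_{ik}$. The charge of $B_{a_k}(\la^{(k)})$—the number of beads in non-negative positions minus the number of empty positions among the negative ones—is unchanged by adding or removing a box, and so equals its value on the empty partition, namely $a_k$. When $e$ is finite, evaluating this charge runner by runner contributes $L_{ik}+1$ from runner $i$, giving $\sum_{i\in I}L_{ik}=a_k-e$; subtracting the two values $k=1,2$ proves $(\ast)$. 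When $e=\infty$ each $L_{ik}\in\{0,1\}$, equal to $1$ precisely when $i\in B_{a_k}(\la^{(k)})$, the same bead-minus-hole count gives $a_k=\sum_{i\ge 0}L_{ik}-\sum_{i<0}(1-L_{ik})$, and since $L_{i2}-L_{i1}$ has finite support (as $\delta_{\bla}\in\Delta$) the difference of the two counts collapses to $(\ast)$.

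With $(\ast)$ in hand the rest is bookkeeping. By definition $(\delta^{\re}_{\bla})_i=L_{\pi(i)2}-L_{\pi(i)1}$, where $\pi=\pi_{\bb}(B)$ is a bijection of $I$; hence, identifying $+$ with $1$ and $-$ with $-1$,
\[
\sum_{i\in I}\bigl(L_{i2}-L_{i1}\bigr)=\sum_{i\in I}(\delta^{\re}_{\bla})_i=p_B-n_B,
\]
so that $a_2-a_1=p_B-n_B$. I would then set $d=s_1-n_B\in\Z$, so $s_1=n_B+d$. Since $a_k\equiv s_k\pmod e$,
\[
s_2-s_1\equiv a_2-a_1=p_B-n_B\pmod e,
\]
whence $s_2\equiv p_B+d\pmod e$; when $e=\infty$ we have $a_k=s_k$ exactly, so both relations become the equalities $s_1=n_B+d$ and $s_2=p_B+d$. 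This exhibits the required $d$ simultaneously in both cases, and the argument is uniform in $|B|$, using only the fixed choice of $\bla$ and $\re$ (the independence of $n_B,p_B$ from these choices having already been recorded).

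The only real content is the identity $(\ast)$, and I expect its verification to be the main, if routine, obstacle: one must be careful with the sign conventions in the bead-versus-hole count, and the bicore hypothesis is used crucially, since it is exactly what forces each runner to be filled from the bottom up to row $L_{ik}$.
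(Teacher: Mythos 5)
Your proof is correct and follows essentially the same route as the paper: compute $s_2-s_1$ from the abacus row counts $l^{\mc}_{ik}(\bla)$ and identify the resulting sum with $\sum_i(\delta_{\bla})_i=p_B-n_B$. The only difference is that the paper cites an external lemma for the identity $a_k\equiv\sum_{i\in I}l^{\mc}_{ik}(\bla)\bmod e$, whereas you prove it (in fact as an exact equality $\sum_i l^{\mc}_{ik}(\bla)=a_k-e$) via the bead-minus-hole charge count, which makes the argument self-contained.
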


\begin{proof}
Suppose that $\re=(\mc,\bb)$ is a reduced pair for $B$ and let $\bla \in B$. 
Suppose $e$ is finite. Then by~\cite[Lemma~2.2]{L:Rouquier}, $s_k \equiv a_k \equiv \sum_{i \in I} l^{\mc}_{ik}(\bla) \mod e$ for $k=1,2$. Hence
\[s_2 - s_1 \equiv \sum_{i \in I} (l^{\mc}_{i2}(\bla) - l^{\mc}_{i1}(\bla)) \equiv \sum_{i \in I} (\delta_{\bla})_i \equiv p_B-n_B \mod e.\]
Suppose $e=\infty$. Choose $t\ll 0$. Then $s_k=a_k=t+\#\{i \in I \mid i \geq t \text{ and } l^{\mc}_{ik}(\bla)=1\}$ for $k=1,2$. Hence
\[s_2-s_1 = \sum_{i \geq t} (l^{\mc}_{i2}(\bla)-l^{\mc}_{i1}(\bla)) = \sum_{i \geq t} (\delta_{\bla})_i = p_B-n_B.\]
\end{proof}

In~\cite[Section~2.1]{Fayers:Weights}, Fayers introduced the weight of a multipartition. Two multipartitions in the same block have the same weight and so for a block $B$ we define $\wt(B)$ to be the weight of any multipartition belonging to $B$. We define the the weight of the corresponding block $\hat{B}$ as $\wt(\hat{B})=\wt(B)$. We do not give Fayers' definition here, but we note that, roughly speaking, the blocks $\hat{B}$ of small weight tend to be easier to understand. We have $|B|=1$ if and only if $\wt(B)=0$ which holds if and only if $\hat{B}$ is simple.  

\begin{lemma}[{\cite[Propn.~3.8]{Fayers:Weights}}] \label{L:Weight2}
Suppose that $B$ is a core block. 
Then $\wt(B)=m_B$.
\end{lemma}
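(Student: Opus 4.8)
The plan is to evaluate Fayers' weight directly from the residue content and show it collapses to $m_B$. Write $c_i(\bla)$ for the number of nodes of $[\bla]$ of residue $i$ with respect to $\mc$, so that $\Res_{\mc}(\bla)$ is the multiset in which each $i$ occurs $c_i(\bla)$ times. It is convenient to use the standard expression for Fayers' weight,
\[\wt(\bla) = c_{s_1}(\bla) + c_{s_2}(\bla) - \tfrac12 \sum_{i \in I}\bigl(c_i(\bla) - c_{i+1}(\bla)\bigr)^2,\]
with indices read modulo $e$ when $e$ is finite and with the evident modifications when $e = 2$ or $e = \infty$; this is the defect of the block, so it depends only on $\Res_{\mc}(\bla)$ and is genuinely a block invariant. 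First I would record this expression and note that, by Proposition~\ref{P:Delta}, I am free to evaluate it on whichever $\bla \in B$ is most convenient.

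The key reduction is to split the content between the two components. Writing $c_i(\bla) = c_i(\la^{(1)}) + c_i(\la^{(2)})$ and expanding the linear and quadratic parts of the displayed formula, the terms involving a single component $\la^{(k)}$ assemble into the level-one weight of $\la^{(k)}$, viewed as a partition of charge $s_k$. Since $\bla$ is a bicore, each $\la^{(k)}$ is an $e$-core and so has level-one weight $0$; hence every within-component contribution vanishes and I am left with the cross terms
\[\wt(B) = c_{s_1}(\la^{(2)}) + c_{s_2}(\la^{(1)}) - \sum_{i,j\in I} c_i(\la^{(1)})\, c_j(\la^{(2)})\,(\alpha_i,\alpha_j),\]
where $(\alpha_i,\alpha_j)$ is the symmetric matrix equal to $2$, $-1$ or $0$ according as $j=i$, $j=i\pm1$ or otherwise (with the usual changes for $e=2,\infty$). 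This form makes the answer manifestly symmetric in the two components and is what I would prove equals $m_B$.

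Next I would translate the remaining content data onto the abacus. Because each runner of a bicore is flush to the top, the whole configuration is determined by the lowest bead rows $l^{\mc}_{ik}(\bla)$, and these are encoded runner by runner by the sign sequence through $l^{\mc}_{\pi(i)2}(\bla) - l^{\mc}_{\pi(i)1}(\bla) = (\delta_{\bla})_i$. Using Corollary~\ref{C:GenBlock} I would then replace $\bla$ by the representative whose sign sequence has the $m_B$ matched $-/+$ pairs sitting in well-separated adjacent runners, with the $|n_B - p_B|$ surplus signs pushed to one end. For such a representative each component is a disjoint union of elementary $e$-cores localised at the matched pairs, so the three cross terms above decompose as a sum of local contributions, one per matched pair. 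A short direct calculation (as in the one- and two-sign cases) shows that each matched pair contributes exactly $1$ while each surplus sign contributes $0$, whence $\wt(B) = m_B$. The cases $e=\infty$, where there is always room to separate the pairs, and $|B|=1$, where $m_B=0$, fall out immediately.

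The main obstacle will be the content bookkeeping for finite $e$. The residue labels then live on a cycle, so $\sum_i(c_i - c_{i+1})^2$ carries a genuine wrap-around term and the infinite bead tails must be regularised before the local decomposition is rigorous; the case $e=2$, where $(\alpha_0,\alpha_1)=-2$, needs separate care. When $e$ is too small to separate the matched pairs, I would instead evaluate the global formula on the sorted representative and verify the telescoping of $\sum_i(c_i-c_{i+1})^2$ directly, or else appeal to invariance of the weight under the focused Scopes-type moves $s_{ij}$ of Lemma~\ref{L:GenBlock} to reduce to a spread-out model. Establishing, uniformly in $e$, that every matched pair contributes $1$ and every surplus sign contributes $0$ is the crux of the argument.
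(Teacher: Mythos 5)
The paper does not actually prove this lemma: it is quoted verbatim from Fayers \cite[Propn.~3.8]{Fayers:Weights}, and the author explicitly declines even to recall the definition of the weight. So there is no in-paper argument to measure you against; your proposal has to stand on its own as a reconstruction of Fayers' computation. The first half of it does. The defect formula $\wt(\bla)=c_{s_1}(\bla)+c_{s_2}(\bla)-\tfrac12\sum_i(c_i(\bla)-c_{i+1}(\bla))^2$ is indeed Fayers' definition and is visibly a block invariant; splitting $c_i(\bla)=c_i(\la^{(1)})+c_i(\la^{(2)})$ does isolate the two level-one weights, which vanish because each component of a bicore is an $e$-core; and the surviving cross term $c_{s_1}(\la^{(2)})+c_{s_2}(\la^{(1)})-\sum_{i,j}c_i(\la^{(1)})c_j(\la^{(2)})(\alpha_i,\alpha_j)$ is correct (I checked it on $e=2$, $\ms=(0,0)$, $\bla=((1),\varnothing)$, where it gives $0+1-0=1=m_B$). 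Up to this point you are following essentially the route Fayers takes.

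The gap is in the second half, and it is not just a matter of unfinished bookkeeping. Your plan is to ``replace $\bla$ by the representative whose sign sequence has the $m_B$ matched $-/+$ pairs sitting in well-separated adjacent runners.'' No such representative exists in general: by Lemma~\ref{L:GenBlock} and Corollary~\ref{C:GenBlock}, the set of runners $i$ with $l^{\mc}_{i1}(\bla)\neq l^{\mc}_{i2}(\bla)$ is an invariant of the block, and moving within $B$ only permutes the signs $\pm$ among those same fixed runners. So the geometry of where the signs sit cannot be improved by choosing a different $\bla\in B$, and your fallback of invoking the $s_{ij}$ moves suffers from exactly the same problem (they stay inside $B$). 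Already for $e=2$ with $\delta=(-,+)$ the two signs are forced onto cyclically adjacent runners and no separation is possible, yet this is a genuine weight-one block. The phrase ``each component is a disjoint union of elementary $e$-cores localised at the matched pairs'' also has no precise meaning: the content vector of an $e$-core is a global function of its $\beta$-set and does not decompose additively unless the supports of the first differences $c_i-c_{i+1}$ are disjoint, which is precisely what you cannot arrange. What is actually required — and what you correctly identify as the crux but do not supply — is a direct evaluation of the cross term as an explicit function of the row differences $l^{\mc}_{i2}(\bla)-l^{\mc}_{i1}(\bla)\in\{-1,0,1\}$, valid for arbitrary finite $e$ including $e=2$ and with the cyclic wrap-around handled; that computation is the entire content of Fayers' proposition, and without it the proposal reduces the lemma to itself.
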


\begin{corollary}
Let $B$ be a core block. If $e$ is finite then \[0 \leq \wt(B) \leq \left\lfloor \frac{e}{2} \right \rfloor.\] 
\end{corollary}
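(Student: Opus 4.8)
The plan is to reduce the claim to an elementary counting bound via Lemma~\ref{L:Weight2}. That lemma gives $\wt(B) = m_B = \min\{n_B, p_B\}$, so it suffices to show $0 \leq m_B \leq \lfloor e/2 \rfloor$. The lower bound is immediate, since $n_B$ and $p_B$ are cardinalities of subsets of $I$ and hence non-negative, forcing $m_B \geq 0$.

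For the upper bound, the key observation is that when $e$ is finite we have $I = \{0, 1, \ldots, e-1\}$, a set of exactly $e$ elements. Fixing any $\bla \in B$, the sequence $\delta_{\bla} = ((\delta_{\bla})_i)_{i \in I}$ therefore has exactly $e$ entries, each lying in $\{-, 0, +\}$. By the definitions of $n_B$ and $p_B$ as the numbers of entries equal to $-$ and to $+$ respectively, the disjointness of these two collections of positions yields
\[
n_B + p_B \leq \#I = e.
\]
Then $2 m_B = 2\min\{n_B, p_B\} \leq n_B + p_B \leq e$, so $m_B \leq e/2$. Since $m_B \in \Z$, this gives $m_B \leq \lfloor e/2 \rfloor$, completing the argument.

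There is no real obstacle here: the entire content is packaged into Lemma~\ref{L:Weight2}, and what remains is the observation that $\delta_{\bla}$ is a sequence of length $e$ whose $-$ and $+$ entries occupy disjoint positions. The only point that requires a moment of care is confirming that the finiteness of $e$ is exactly what bounds the length of $\delta_{\bla}$ by $e$ (in the $e=\infty$ case $\delta_{\bla}$ is infinite and no such bound exists, which is consistent with the hypothesis that $e$ is finite in the statement).
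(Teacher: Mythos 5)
Your proof is correct and is exactly the argument the paper intends (the corollary is stated without proof as an immediate consequence of Lemma~\ref{L:Weight2}): $\wt(B)=\min\{n_B,p_B\}$ and the $-$ and $+$ entries of $\delta_{\bla}$ occupy disjoint positions in a sequence of length $e$, so $2m_B\leq n_B+p_B\leq e$.
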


For a core block $B$, we have seen that the Specht modules in $\hat{B}$ can be indexed by a subset of $\Delta$. The next problem is to decide which $\bmu \in B$ index simple modules. We will see that this can be determined by looking at $\delta_{\bmu}$. Let $\bz = (\bzi)_{i \in I}$ be the sequence where $\bzi=0$ for all $i \in I$.
We first show that it is sufficient to consider the case where $\re=(\mc,\bz)$ is a reduced pair for $B$.  Note that in this case, we have $\prec$ equal to the usual total order $<$ on $I$.

\begin{lemma}[{\cite[Proposition 2.10]{LyleRuff}}] \label{L:KleshReduce}
Suppose that $B$ is a core block with $(\mc,\bb)$ a reduced pair for $B$. 
Let $\bmu \in B$. Then for $i \in I$ and $k=1,2$ we have $l^{\mc}_{ik}(\bmu)=b_i + x_{ik}$ where $x_{ik} \in \{0,1\}$. 
 We define $\bar{\bmu}\in \La^{2}$ and $\bar{\mc}\in \Z^2$ so that 
\[l^{\bar{\mc}}_{ik}(\bar{\bmu})=x_{ik}\]
for $i \in I$ and $k=1,2$; note that this abacus configuration does uniquely define both $\bar{\mc}$ and $\bar{\bmu}$ and that $\delta_{\bmu}^{\mc}=\delta_{\bar{\bmu}}^{\bar{\mc}}$, so that if $\bar{B}$ is the $\bar{\mc}$-equivalence class containing $\bar{\bmu}$ then $(\bar{\mc},\bz)$ is a reduced pair for $\bar{B}$.  
Then $\bmu \in \La^{\ms}$ if and only if $\bar{\bmu} \in \La^{\bar{\mc}}$. 
\end{lemma}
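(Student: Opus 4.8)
The plan is to transport the whole question onto the two $e$-abacuses and to apply the recursive definition of $\La^{\mc}$ directly, noting first that since $a_k\equiv s_k\bmod e$ we have $\La^{\ms}=\La^{\mc}$ (and $\La^{\bar{\mc}}$ is read off the flattened abacus in the same way). By the abacus lemma, deleting an $i$-node slides a bead back along runner $i$, so the residue of a node is recorded by the runner carrying the moving bead. The passage from $\bmu$ to $\bar{\bmu}$ only shifts each runner $i$ vertically by $b_i$, hence leaves every residue unchanged. Thus it suffices to show that this vertical flattening preserves the property that $\bmu$ can be reduced to $\emp$ by repeatedly deleting good nodes.

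The next step is the local signature calculation inside a core block. Fix $i$ (reading $i-1$ cyclically, with the usual row shift at $i=0$). Because $B$ is a core block there are no gaps above the lowest beads, so the only addable or removable $i$-nodes of component $k$ sit at the junction of runners $i-1$ and $i$, and they are all of one type; writing $d_k=l^{\mc}_{ik}(\bmu)-l^{\mc}_{i-1,k}(\bmu)$, component $k$ contributes $\rr^{d_k}$ when $d_k>0$ and $\ab^{-d_k}$ when $d_k<0$, with component $1$ placed $\lhd$-before component $2$. Hence whether a good $i$-node exists, and in which component it lies, is determined by the pair $(d_1,d_2)$ after cancelling $\rr\ab$.

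The difficulty is that $d_k$ involves $b_i-b_{i-1}$, which the flattening alters: for $\bar{\bmu}$ the same differences become $d_k-(b_i-b_{i-1})$, so the number — and even the existence — of good $i$-nodes can be entirely different for $\bmu$ and $\bar{\bmu}$. The good-node data is therefore reorganised rather than preserved, so a node-by-node induction is impossible; moreover deleting a good node can open a gap and so leave the class of bicores, so one cannot remain inside the clean core-block picture. This is the main obstacle.

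To get around it I would reduce $\bb$ to a constant sequence one step at a time, each step interchanging two cyclically adjacent runners whose offsets differ (a focused form of Scopes equivalence), and finally shift $\mc$ by a multiple of $e$ — a harmless relabelling — to reach $\bz$. Each elementary step is a bijection between two core blocks that fixes $\delta$, and I would prove that it intertwines good-node deletion after relabelling residues by the transposition of the swapped runners, so that it carries $\La^{\mc}$-multipartitions to $\La^{\bar{\mc}}$-multipartitions; iterating then identifies $\bmu\in\La^{\ms}$ with $\bar{\bmu}\in\La^{\bar{\mc}}$. The crux of this last step is matching the reduced signatures across a single move: the delicate cases are the runners whose offsets differ by exactly the amount being swapped, where a component's contribution flips between $\rr$-type and $\ab$-type, and the mixed-sign case $d_1>0>d_2$ where the two components cancel. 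Here I would track the deletions through the intermediate, possibly non-bicore, configurations and verify the correspondence stage by stage.
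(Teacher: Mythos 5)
Your reduction correctly isolates the real difficulty --- the $i$-signatures of $\bmu$ and $\bar{\bmu}$ are genuinely different, so no node-by-node comparison is available --- and the individual swap steps are sound: in the situation of Lemma~\ref{L:Scopesy}, $[\bla]$ has removable $(i+1)$-nodes but no addable ones, so the $(i+1)$-signature is a string of $\rr$~terms and the nodes can be stripped off one good node at a time, whence membership in $\La^{\ms}$ is preserved without any appeal to the Fock space. The gap is in the global claim that these moves ``reduce $\bb$ to a constant sequence''. Each cyclically adjacent interchange preserves $\sum_{i\in I}b_i$ (the wraparound move sends $(b_0,b_{e-1})$ to $(b_{e-1}+1,b_0-1)$), and shifting $\mc$ by a multiple of $e$ changes this sum by a multiple of $e$; so $\sum_i b_i \bmod e$ is an invariant of every move you allow yourself, while the target $\bz$ has sum $0$. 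In general the reduction terminates at a flat but non-constant tuple: for $e=3$ and $\bb=(1,1,0)$ no interchange applies and no relabelling by multiples of $e$ reaches $(0,0,0)$. This is exactly why Lemma~\ref{L:Flat} of the paper claims only flatness, not constancy. To close the gap you need the extra move of shifting \emph{both} charges by the same integer $c$ not divisible by $e$ (this leaves $\La^{\ms}$ unchanged since all residues rotate together, cyclically permutes the runners, and changes $\sum_i b_i$ by $c$), after which the flat endpoint is forced to be constant; alternatively you need a direct comparison between a flat non-constant tuple and $\bz$.

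Two further loose ends. First, even once the base tuple is $\bz$, the endpoint of your chain of swaps is $\bar{\bmu}$ with its runners permuted, not $\bar{\bmu}$ itself, so you still need that for base tuple $\bz$ the Kleshchev property is invariant under the relevant runner permutations; this is essentially the content of Lemmas~\ref{L:StillKlesh} and~\ref{L:LoseZeros}, and you must establish it by the elementary signature argument rather than by quoting Lemma~\ref{L:Sc2}, since that result depends on Proposition~\ref{P:Klesh2} and hence on the present lemma. Second, for calibration: the paper does not prove this statement at all --- it is imported verbatim from \cite[Proposition~2.10]{LyleRuff} --- so there is no in-paper argument to match yours against; the closest internal analogue is the direct analysis of reduced $i$-signatures in Lemmas~\ref{L:StillKlesh} and~\ref{L:LoseZeros} for the case $\bb=\bz$, which is rather different in flavour from your Scopes-style reduction.
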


\begin{lemma} \label{L:StillKlesh}
 Suppose that $B$ is a core block and $\re=(\mc,\bz)$ is a reduced pair for $B$. Let $\bmu \in B$. 
Suppose that $i-1,i \in I$ with $i-1<i$ and let $\bnu$ be the bipartition whose Young diagram is formed by removing all removable $i$-nodes from $[\bmu]$. Then unless $(\delta_{\bmu})_{i-1}=+$ and $(\delta_{\bmu})_{i}=-$ we have that $\bmu \in \La^{\ms}$ if and only if $\bnu \in \La^{\ms}$. 
\end{lemma}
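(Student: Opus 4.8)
The plan is to translate the statement entirely onto the abacus and then reduce it to a standard closure property of the crystal that underlies the recursive definition of $\La^{\ms}$.

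First I would record the local picture on the abacus. Since $\re=(\mc,\bz)$ is a reduced pair, every $l^{\mc}_{jk}(\bmu)$ lies in $\{0,1\}$, and since $\bmu$ is a bicore the beads on runner $j$ of component $k$ occupy precisely the rows $\leq l^{\mc}_{jk}(\bmu)$ (in the case $e=\infty$ the value $l^{\mc}_{jk}(\bmu)$ simply records whether the unique bead on runner $j$ is present). Because $i-1<i$ forces $i\geq 1$, there is no wrap-around, and by the translation between Young-diagram and abacus operations recorded in Section~\ref{S:Abacus}, removing (resp.\ adding) an $i$-node from component $k$ slides a bead from runner $i$ back to runner $i-1$ (resp.\ from runner $i-1$ forward to runner $i$) within the same row. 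Comparing runner $i-1$ and runner $i$ of a fixed component row by row, they agree in all rows $\leq 0$ (both full) and all rows $\geq 2$ (both empty), so they can differ only in row $1$. Hence each component contributes at most one addable or removable $i$-node, and component $k$ carries a removable (resp.\ addable) $i$-node exactly when $l^{\mc}_{i-1,k}(\bmu)<l^{\mc}_{ik}(\bmu)$ (resp.\ $l^{\mc}_{i-1,k}(\bmu)>l^{\mc}_{ik}(\bmu)$).

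Next I would compute the $i$-signature. Since $\lhd$ lists all of component $1$ before component $2$, the $i$-signature consists of the (at most one) symbol from component $1$ followed by the (at most one) symbol from component $2$. The only way a removable $i$-node can be cancelled in passing to the reduced $i$-signature is therefore as the adjacent pattern $\rr\ab$, with the $\rr$ coming from component $1$ and the $\ab$ from component $2$; that is, when component $1$ has a removable $i$-node and component $2$ has an addable one. By the criterion of the previous step this occurs exactly when $l^{\mc}_{i-1,1}(\bmu)<l^{\mc}_{i,1}(\bmu)$ and $l^{\mc}_{i-1,2}(\bmu)>l^{\mc}_{i,2}(\bmu)$, which, unwinding the definition $(\delta_{\bmu})_j=l^{\mc}_{j2}(\bmu)-l^{\mc}_{j1}(\bmu)$, says precisely that $(\delta_{\bmu})_{i-1}=+$ and $(\delta_{\bmu})_{i}=-$. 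Thus, \emph{outside} the excluded case every removable $i$-node survives into the reduced $i$-signature (is \emph{normal}), so $\bnu$, obtained by removing all removable $i$-nodes, coincides with the bipartition $\tilde\bmu$ obtained by removing all normal $i$-nodes.

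Finally I would invoke the fact that $\bmu\in\La^{\ms}$ if and only if $\tilde\bmu\in\La^{\ms}$ and combine it with $\bnu=\tilde\bmu$ to conclude. This last fact is where the real work lies, and I expect its reverse implication to be the main obstacle. The forward direction is immediate from the recursive definition of $\La^{\ms}$: the normal $i$-nodes can be stripped off one at a time, each being the good $i$-node of the current bipartition, so this process keeps us inside $\La^{\ms}$. The reverse direction — that re-inserting the normal $i$-nodes stays inside $\La^{\ms}$ — is not visible from the one-node-at-a-time definition and relies on the $i$-string structure of the crystal of Kleshchev bipartitions, which guarantees that $\bmu$, lying on the $i$-string through $\tilde\bmu$ and within its range, is again Kleshchev; for this I would cite the general theory (e.g.~\cite{Kleshchev:Survey}). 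I would also dispatch, as a routine check, the point that removing all removable $i$-nodes simultaneously is well defined and agrees with the iterated removal of good $i$-nodes, which is clear here since there are at most two such nodes and they lie in different components.
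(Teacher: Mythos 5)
Your proof is correct and follows essentially the same route as the paper: both reduce to the observation that, with $\bb=\bz$, each component has at most one removable $i$-node, that the only $i$-signature in which a removable node gets cancelled is $\rr\ab$ (which is precisely the excluded case $(\delta_{\bmu})_{i-1}=+$, $(\delta_{\bmu})_{i}=-$), and that otherwise the removable $i$-nodes can be stripped off one at a time, each being good at its turn. The only superfluous step is your appeal to the crystal $i$-string structure for the reverse implication: the paper's recursive definition of $\La^{\ms}$ already asserts that $\bmu\in\La^{\ms}$ \emph{if and only if} the bipartition obtained by removing a good node lies in $\La^{\ms}$, so once each node in your removal sequence is known to be good, both directions are immediate and no external citation is needed.
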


\begin{proof}
Since $\re=(\mc,\bz)$, each component of $[\bmu]$ has at most one removable $i$-node. If $[\bmu]$ has no removable $i$-nodes, the lemma is trivial. So suppose that $[\bmu]$ has one removable $i$-node. Then the $i$-residue sequence of $\bmu$ is either $\rr$, $\ab\rr$ or $\rr\ab$. In the first two cases, this removable node is good, and so the lemma holds. In the third case, the first component of $[\bmu]$ has a removable $i$-node and the second has an addable $i$-node. Given $\re=(\mc,\bz)$, this is only  possible if $(\delta_{\bmu})_{i-1}=+$ and $(\delta_{\bmu})_i = -$. 

Now suppose that $[\bmu]$ has two removable $i$-nodes so the $i$-residue sequence of $\bmu$ is $\rr\rr$ and the removable $i$-node in the first component is good. Removing this node gives a bipartition, ${\bsig}$ say, with $\bmu \in \La^{\ms}$ if and only if ${\bsig} \in \La^{\ms}$. By the last paragraph, the one removable $i$-node in $[\bsig]$ is good, so that ${\bsig} \in \La^{\ms}$ if and only if $\bnu \in \La^{\ms}$. 
\end{proof}

\begin{lemma} \label{L:LoseZeros}
Suppose that $B$ is a core block with $|B|>1$ and $(\mc,\bz)$ is a reduced pair for $B$. Let $\bmu \in B$. There is a bipartition $\bnu$ which satisfies the following conditions. 
\begin{itemize}
\item $\bnu$ lies in a core block $B'$ and $(\mc,\bz)$ is a reduced pair for $B'$.
\item $\delta_{\bnu}$ is essentially the same as $\delta_{\bmu}$. 
\item There exist $i_1,i_2 \in I$ with $i_1\leq i_2$ such that
\begin{itemize}
\item For $k=1,2$, $l^{\mc}_{mk}(\bnu)=1$ for all $m < i_1$ and $l^{\mc}_{mk}(\bnu)=0$ for all $m > i_2$.
\item $(\delta_{\bnu})_m \in \{-,+\}$ for $i_1\leq m \leq i_2$. 
\end{itemize}
\end{itemize}
Then $\bmu \in \La^{\ms}$ if and only if $\bnu \in \La^{\ms}$. 
\end{lemma}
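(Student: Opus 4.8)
The plan is to realise $\bnu$ from $\bmu$ by a sequence of node-removal operations of the kind controlled by Lemma~\ref{L:StillKlesh}, each chosen so as to avoid the single exceptional case in that lemma. Since $(\mc,\bz)$ is a reduced pair, every bicore in the block has $l^{\mc}_{ik}\in\{0,1\}$ on each runner $i$ and in each component $k$, and because $\bb=\bz$ the permutation $\pi$ is the identity and $(\delta_{\bmu})_i=l^{\mc}_{i2}(\bmu)-l^{\mc}_{i1}(\bmu)$. I classify each runner by its pair $(l^{\mc}_{i1},l^{\mc}_{i2})$: write $\mathtt{E}=(0,0)$ and $\mathtt{F}=(1,1)$ for the two kinds of zero, and $\mathtt{M}=(1,0)$, $\mathtt{P}=(0,1)$ for the runners carrying $(\delta_{\bmu})_i=-$ and $(\delta_{\bmu})_i=+$ respectively. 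In this language the bicore $\bnu$ I am aiming for is the one with the same ordered subsequence of signs as $\bmu$, but with every $\mathtt{F}$ pushed to the far left and every $\mathtt{E}$ pushed to the far right; then $i_1$ is the number of $\mathtt{F}$s and $i_2=i_1+n_B+p_B-1$.

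The key computation is the effect of the operation $R_i$ that removes all removable $i$-nodes. On the abacus $R_i$ acts only on runners $i-1$ and $i$, and in each component independently it changes a pair $(l^{\mc}_{i-1,k},l^{\mc}_{i,k})=(0,1)$ into $(1,0)$ while fixing everything else. Checking the four types at runner $i$ against the four at runner $i-1$ shows that the transitions I need, namely
\[(\mathtt{Y},\mathtt{F})\mapsto(\mathtt{F},\mathtt{Y})\qquad\text{and}\qquad(\mathtt{E},\mathtt{Y})\mapsto(\mathtt{Y},\mathtt{E}),\]
hold for every type $\mathtt{Y}$: an $\mathtt{F}$-runner slides one step to the left past any neighbour, and an $\mathtt{E}$-runner slides one step to the right. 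The only transitions that alter the multiset of signs are $(\mathtt{M},\mathtt{P})\mapsto(\mathtt{F},\mathtt{E})$ and $(\mathtt{P},\mathtt{M})\mapsto(\mathtt{F},\mathtt{E})$, and the second of these is precisely the excluded case $(\delta_{\bmu})_{i-1}=+,\,(\delta_{\bmu})_i=-$ of Lemma~\ref{L:StillKlesh}.

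The algorithm is then: first apply operations $R_i$ that carry every $\mathtt{F}$ to the far left, and afterwards apply further operations $R_i$ that carry every $\mathtt{E}$ to the far right. Each operation used is an $\mathtt{F}$-slide, for which $(\delta)_i=0$, or an $\mathtt{E}$-slide, for which $(\delta)_{i-1}=0$; in either case the exceptional pair $(+,-)$ does not occur, so by Lemma~\ref{L:StillKlesh} each step preserves membership in $\La^{\ms}$, whence $\bmu\in\La^{\ms}$ if and only if $\bnu\in\La^{\ms}$. Each such slide is a local move transposing a zero with a single neighbour, so the ordered list of signs is never disturbed and $\delta_{\bnu}$ is essentially the same as $\delta_{\bmu}$. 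Since every $l^{\mc}_{ik}(\bnu)\in\{0,1\}$, the $\ms$-class $B'$ of $\bnu$ is a core block with reduced pair $(\mc,\bz)$ (by the converse part of Lemma~\ref{L:BaseTuple} when $e$ is finite, and automatically when $e=\infty$). Finally $|B|>1$ forces both a $-$ and a $+$ to occur by Corollary~\ref{C:GenBlock}, so $n_B+p_B\geq 1$ and $i_1\leq i_2$, as required.

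The main work is the finite case-check of the second paragraph, confirming that the two families of slides realise exactly the clean transpositions and never invoke an annihilating move; this is the one place where a sign pair could be destroyed, so it is where care is needed. I also expect to spend a little effort on two boundary points: the wrap-around behaviour at $i=0$, and (when $e=\infty$) the observation that runners are already of type $\mathtt{F}$ for $i\ll 0$ and of type $\mathtt{E}$ for $i\gg 0$, so that only the finitely many internal zeros and the finitely many signs actually move. Termination is immediate, since each $\mathtt{F}$-slide strictly decreases the total distance of the $\mathtt{F}$s from the left end and each $\mathtt{E}$-slide strictly decreases the total distance of the $\mathtt{E}$s from the right end.
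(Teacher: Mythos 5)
Your proof is correct and follows essentially the same route as the paper's: both perform the same two-phase procedure, first sliding the runners with $l^{\mc}_{i1}=l^{\mc}_{i2}=1$ to the far left and then sliding the runners with $l^{\mc}_{i1}=l^{\mc}_{i2}=0$ to the far right, with each swap justified by Lemma~\ref{L:StillKlesh} precisely because the relevant entry of $\delta$ is $0$. Your explicit four-type case analysis simply makes systematic the runner-swapping computation that the paper leaves implicit.
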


Before proving this result, we give an example of bipartitions $\bmu$ and $\bnu$ which satisfy the lemma. 

\begin{ex} Below, we have $\delta_{\bmu}=(+,0,+,0,-,-,+,0,+,0,-)$ and $\delta_{\bnu}=(0,0,+,+,-,-,+,+,-,0,0)$. 
\begin{align*}
\bmu & = \abacus(bbbbbbbbbbb,nbnnbbnbnnb), \quad  \abacus(bbbbbbbbbbb,bbbnnnbbbnn) \\ 
\bnu & = \abacus(bbbbbbbbbbb,bbnnbbnnbnn), \quad  \abacus(bbbbbbbbbbb,bbbbnnbbnnn)
\end{align*}
\end{ex}

\begin{proof}[Proof of Lemma~\ref{L:LoseZeros}]
Suppose there exist $i,j \in I$ with $j<i$ and $l^{\mc}_{ik}(\bmu)=1$ for $k=1,2$ while $l^{\mc}_{jk}(\bmu)=0$ for $k=1$ or $k=2$.  
Choose $i$ minimal with this condition. Then following Lemma~\ref{L:StillKlesh}, we may remove all the removable $i$-nodes of $[\bmu]$ to obtain the Young diagram of a bipartition ${\tilde{\bsig}}$ with $\tilde{\bsig} \in \La^{\ms}$ if and only if $\bmu \in \La^{\ms}$. Then the abacus configuration of $\tilde{\bsig}$ is formed from that of $\bmu$ by swapping runners $i-1$ and $i$, so that $\delta_{\tilde{\bsig}}$ is formed from $\delta_{\bmu}$ by swapping the entries in the positions indexed by $i-1$ and $i$; note that $(\delta_{\bmu})_i=0$ so that $\delta_{\bmu}$ and $\delta_{\tilde{\bsig}}$ are essentially the same. We continue in this way until we reach a bipartition $\bsig$ where there do not exist $i,j$ as above. 

Now consider $\bsig$. Suppose there exist $i',j' \in I$ with $i'<j'$ and $l^{\mc}_{i'k}(\bsig)=0$ for $k=1,2$ while $l^{\mc}_{j'k}(\bsig)=1$ for $k=1$ or $k=2$.  
Choose $i'$ maximal with this property. Again following Lemma~\ref{L:StillKlesh}, we may remove all the removable $(i'+1)$-nodes of $[\bsig]$ to obtain the Young diagram of a bipartition $\tilde{\bnu}$ with $\tilde{\bnu} \in \La^{\ms}$ if and only if $\bsig \in \La^{\ms}$. Similarly to the case above, $\delta_{\tilde{\bnu}}$ is formed from $\delta_{\bsig}$ by swapping the entries in the positions indexed by $i'$ and $i'+1$, and $(\delta_{\bsig})_{i'}=0$ so that $\delta_{\bsig}$ and $\delta_{\tilde{\bnu}}$ are essentially the same.
We continue in this way until we reach a bipartition $\bnu$ where there do not exist $i',j'$ as above. Then $\bnu$ satisfies the conditions of the lemma and $\bmu \in \La^{\ms}$ if and only if $\bnu \in \La^{\ms}$. 
\end{proof}

\begin{proposition} \label{P:Klesh2}
Suppose that $B$ is a core block and $\bmu \in B$. Then $\bmu\in \La^{\ms}$ if and only if $\delta_{\bmu} \in \Delta_0$.  
\end{proposition}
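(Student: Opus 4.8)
The plan is to reduce to a normalized abacus configuration and then match the recursive removal of good nodes against the pairing process that produces $\R(\delta_{\bmu})$.

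First I would apply Lemma~\ref{L:KleshReduce} to replace the reduced pair by one of the form $(\mc,\bz)$, so that every $l^{\mc}_{ik}(\bmu)\in\{0,1\}$, $\pi$ is the identity and $\prec$ is the usual order; this changes neither whether $\bmu\in\La^{\ms}$ nor whether $\delta_{\bmu}\in\Delta_0$. If $|B|=1$, then by Corollary~\ref{C:GenBlock} only one of $\pm$ occurs in $\delta_{\bmu}$, so $\delta_{\bmu}\in\Delta_0$ automatically; and since the block $\hat{B}$ then contains a single Specht module, that module is simple, so $\bmu\in\La^{\ms}$ and both sides hold. Thus I may assume $|B|>1$, and using Lemma~\ref{L:LoseZeros} I may further assume $\bmu$ is normalized: there are $i_1\le i_2$ with $l^{\mc}_{mk}(\bmu)=1$ for $m<i_1$, $l^{\mc}_{mk}(\bmu)=0$ for $m>i_2$, and $(\delta_{\bmu})_m\in\{-,+\}$ for $i_1\le m\le i_2$. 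In this form $\delta_{\bmu}$ is, up to $0$s, the contiguous $\pm$-string $w=((\delta_{\bmu})_{i_1},\dots,(\delta_{\bmu})_{i_2})$, and both membership questions depend only on $w$.

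The engine of the proof is the following translation, which I would verify by a direct bead count. With $\bb=\bz$, a component $k$ has a removable $i$-node exactly when $(l^{\mc}_{i-1,k}(\bmu),l^{\mc}_{ik}(\bmu))=(0,1)$ and an addable $i$-node exactly when this pair is $(1,0)$; since the entries lie in $\{0,1\}$ there is no removable $0$-node at all, as the wrap from runner $0$ to runner $e-1$ would require a drop of two rows. Reading residues off $w$: at an adjacent pattern $(\delta_{\bmu})_{i-1}=-$, $(\delta_{\bmu})_i=+$ the $i$-signature is $\ab\rr$ (addable in component $1$, removable in component $2$), which is already reduced and whose removable node is good. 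Removing all removable $i$-nodes therefore removes this good node, and by Lemma~\ref{L:StillKlesh} — we are not in its exceptional case $(\delta_{\bmu})_{i-1}=+$, $(\delta_{\bmu})_i=-$ — this preserves membership in $\La^{\ms}$; on the abacus it turns the adjacent $-+$ into $00$, that is, it deletes a matched $-+$ from $w$. This is exactly one step of the process defining $\R$. Iterating, and renormalizing via Lemma~\ref{L:LoseZeros} after each step, I reduce $w$ to $+^a-^b$ (the essential form of $\R(w)$), throughout preserving both ``$\bmu\in\La^{\ms}$'' and ``$\delta_{\bmu}\in\Delta_0$''.

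It remains to treat the fully reduced case $w=+^a-^b$, and this is the step I expect to be the main obstacle. If $a=0$ or $b=0$ then $w$ has a single sign, so $\delta_{\bmu}\in\Delta_0$ and $|B|=1$, whence $\bmu\in\La^{\ms}$ as above. If $a,b\ge 1$ I would show $\bmu$ has no good $i$-node for any $i$, so that $\bmu\notin\La^{\ms}$ while also both $-$ and $+$ occur in $\R(w)$ and hence $\delta_{\bmu}\notin\Delta_0$. The bead count shows that the only removable node of $\bmu$ lies at the junction residue $i^{*}=i_1+a$ in component $1$, and that at that residue component $2$ contributes an addable node, so the $i^{*}$-signature is $\rr\ab$ and reduces to the empty string; every other residue has no removable node at all, since the interiors of the $+$- and $-$-blocks and the two outer boundaries contribute only $\ab$s or nothing, and residue $0$ is excluded. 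Hence $\bmu$ has no good node, and being nonempty it is not Kleshchev. The care needed here — checking each residue class, verifying the $\rr\ab$ cancellation at the junction, and ruling out a removable $0$-node — is where the argument is most delicate; the case $e=\infty$ runs identically, and indeed more simply, as there is no wrap-around.
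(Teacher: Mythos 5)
Your proposal is correct and follows essentially the same route as the paper: reduce to $\bb=\bz$ via Lemma~\ref{L:KleshReduce}, normalize via Lemma~\ref{L:LoseZeros}, peel off adjacent $-+$ pairs using Lemma~\ref{L:StillKlesh} (the paper phrases this as induction on $|\St(\delta_{\bmu})|$), and finish with the configuration $+^a-^b$, whose unique removable node sits in a signature $\rr\ab$ and so is not good. Your base-case analysis is just a more explicit version of the bead-count the paper conveys with its abacus picture.
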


\begin{proof}
  Suppose that $(\mc,\bb)$ is a reduced pair for $\bmu$. By Lemma~\ref{L:KleshReduce}, we may assume that $\bb=\bz$. 
If $|B|=1$ then the block $\hat{B}$ is simple and so $\bmu \in \La^{\ms}$. So assume that $|B|>1$. 
 By Lemma~\ref{L:LoseZeros}, we may assume that there exist $i_1,i_2 \in I$ with $i_1\leq i_2$ such that
\begin{itemize}
\item For $k=1,2$, $l^{\mc}_{mk}(\bmu)=1$ for all $m < i_1$ and $l^{\mc}_{mk}(\bmu)=0$ for all $m > i_2$.
\item $(\delta_{\bmu})_m \in \{-,+\}$ for $i_1\leq m \leq i_2$.   
\end{itemize}
We prove Proposition~\ref{P:Klesh2} by induction on $|\St(\delta_{\bmu})|$. First suppose that $|\St(\delta_{\bmu})|=0$. If $\delta_{\bmu} \in \Delta_0$ then $|B|=1$, which we have assumed is not true. So suppose that $\delta_{\bmu} \notin \Delta_0$ so that there are entries equal to both $-$ and $+$ in $\delta_{\bmu}$ with the $+$ terms occuring before the $-$ terms. The abacus configuration of $\bmu$ with respect to $\mc$ looks like: 
\[\abacus(bbbbbbbbbbb,bbnnnbbbnnn), \quad \abacus(bbbbbbbbbbb,bbbbbnnnnnn).\]  
Then $\bmu$ has only one removable node which is not good, hence $\bmu \notin \La^{\ms}$.

Now suppose that $|\St(\delta_{\bmu})|>0$ and Proposition~\ref{P:Klesh2} holds for any $\bsig$ in a core block with $|\St(\delta_{\bsig)}|<|\St(\delta_{\bmu})|$. Then there exist $i,i+1 \in I$ with $i<i+1$ and $(\delta_{\bmu})_i=-$ and $(\delta_{\bmu})_{i+1}=+$. By Lemma \ref{L:StillKlesh}, we can remove the removable $i+1$-node from $[\bmu]$ to obtain the Young diagram of a bipartition $\bsig$ with $\bsig \in \La^{\ms}$ if and only if $\bmu \in \La^{\ms}$. Furthermore, we have $(\delta_{\bsig})_i=(\delta_{\bsig})_{i+1}=0$ and $(\delta_{\bsig})_m=(\delta_{\bmu})_m$ for $m \neq i,i+1$ so that $\delta_{\bsig} \in \Delta_0$ if and only if $\delta_{\bmu}  \in \Delta_0$. Hence
\[\bmu \in \La^{\ms} \iff \bsig \in \La^{\ms} \iff \delta_{\bsig} \in \Delta_0 \iff \delta_{\bmu} \in \Delta_0\]
where the middle step follows from the inductive hypothesis.   
\end{proof}

In this section, we have shown that if $B$ is a core block then we can index the Specht modules corresponding to bipartitions in $B$ by a subset of the set of sequences $\delta \in \Delta$ with $n_B$ entries equal to $-$ and $p_B$ entries equal to $+$, with the simple modules indexed by sequences $\delta \in \Delta_0$. The next step is to determine the entries in the block decomposition matrix. To do this, we begin by introducing the Fock space representation of $\mathcal{U}_v(\widehat{\mathfrak{sl}}_e)$.

\subsection{The Fock space representation} \label{S:Fock}
Let $\mathcal{U}$ denote the quantized enveloping algebra $\mathcal{U}=\mathcal{U}_v(\widehat{\mathfrak{sl}}_e)$. This is a $\mathbb{Q}(v)$-algebra with generators $e_i, f_i$ for $i \in I$ and $v^h$ for $h \in P^{\vee}$; the relations may be found in~\cite{LLT}. 
Let $\F^{\ms}$ be the $\mathbb{Q}(v)$-vector space with basis $\{\Cr{\bla} \mid \bla \in \Lambda^2\}$. This becomes a $\mathcal{U}$-module under the action described in~\cite{LLT}; we call $\mathcal{F}^{\ms}$ the Fock space representation of $\mathcal{U}$. The $\mathcal{U}$-submodule $M^{\ms}$ generated by $s_{\emp^2}$ (where $\emp^2$ is the unique bipartition of $0$) is isomorphic to the irreducible highest weight module $V(\Upsilon)$ for some dominant integral weight $\Upsilon$ of $\mathcal{U}$. This module has a canonical basis (in the sense of Lusztig and Kashiwara) which is indexed by the elements of $\La^{\ms}$; we write $P^{\bmu}$ for the canonical basis element indexed by $\bmu \in \La^{\ms}$. We define the transition coefficients $d_{\bla\bmu}(v)$ so that they satisfy the equation 
\[P^{\bmu} = \sum_{\bla \in \La^2} d_{\bla\bmu}(v) \Cr{\bla};\]
then $d_{\bla\bmu}(v)=0$ unless $\bla \sim_{\ms} \bmu$. 
Ariki's Theorem and its graded analogue, below, relate the transition coefficients to the graded decomposition numbers. 

\begin{theorem} [\cite{Ariki,BK:Decomp}] \label{T:Ariki}
Suppose that $\h_n=\h_{2,n}(q,{\bf Q})$ is defined over a field of characteristic $0$. 
Suppose that $\bla,\bmu \in \La^r_n$ with $\bmu \in \La^{\ms}$. Then
\[[S^{\bla}:D^{\bmu}]_v = d_{\bla\bmu}(v).\]
\end{theorem}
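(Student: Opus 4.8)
The statement is a combination of two deep results that are cited rather than proved from scratch, so the plan is to explain how the graded version follows by assembling them. The foundational input is \emph{Ariki's categorification theorem} \cite{Ariki}, which realizes the Grothendieck group of the category of finitely generated projective $\h_n$-modules, over all $n$, as (a weight space of) the irreducible highest weight module $V(\Upsilon)$ for $\mathcal{U}_v(\widehat{\mathfrak{sl}}_e)$ specialized at $v=1$. Under this identification, the classes of the indecomposable projective modules correspond to the canonical basis elements $P^{\bmu}$, and the classes of the Specht (cell) modules correspond to the standard basis vectors $\Cr{\bla}$. Consequently the ungraded decomposition number $[S^{\bla}:D^{\bmu}]$ equals $d_{\bla\bmu}(1)$, the specialization of the transition coefficient at $v=1$. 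This is where the characteristic-$0$ hypothesis is essential: it is exactly the condition under which Ariki's theorem identifies the decomposition matrix with the canonical basis transition matrix with no positive-characteristic correction terms.

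The second ingredient is the \emph{grading}. Brundan and Kleshchev \cite{BK:Blocks} show that $\h_n$ is isomorphic to a cyclotomic KLR algebra, which carries a $\Z$-grading; by \cite{BKW} the Specht modules inherit a graded lift, so the graded decomposition numbers $[S^{\bla}:D^{\bmu}]_v \in \N[v,v^{-1}]$ are defined. The graded refinement of Ariki's theorem is due to Brundan and Kleshchev \cite{BK:Decomp}: they prove that the graded decomposition numbers are precisely the coefficients $d_{\bla\bmu}(v)$ of the canonical basis of $V(\Upsilon)$, \emph{with the $v$-variable intact}, so that setting $v=1$ recovers the ungraded statement above. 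Thus the proof is essentially a matter of quoting \cite{BK:Decomp} once the objects have been matched up correctly.

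First I would confirm that the module $M^{\ms}$ generated by $s_{\emp^2}$, which was introduced just before the theorem, is exactly the highest weight module $V(\Upsilon)$ appearing in \cite{BK:Decomp}, and that the canonical basis $\{P^{\bmu} \mid \bmu \in \La^{\ms}\}$ of $M^{\ms}$ coincides with theirs; this is a normalization check comparing the conventions of \cite{LLT} for the Fock space action with those of \cite{BK:Decomp}. Second I would verify that the indexing sets agree: the simple modules $D^{\bmu}$ are indexed by $\bmu \in \La^{\ms}_n$ (the Kleshchev bipartitions), which is precisely the labelling set for the canonical basis, so the columns $d_{\bla\bmu}(v)$ of the transition matrix and the columns $[S^{\bla}:D^{\bmu}]_v$ of the graded decomposition matrix are indexed by the same set. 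Third, the vanishing clause $d_{\bla\bmu}(v)=0$ unless $\bla \sim_{\ms} \bmu$, already noted after the definition of the transition coefficients, matches the block-diagonal structure of the decomposition matrix guaranteed by Proposition~\ref{P:Delta}, so the two matrices are being compared block by block.

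The main obstacle is not any hard calculation but the bookkeeping of conventions, and in particular ensuring that the bar-involution and the choice of the bilinear form used to define the canonical basis are compatible with the grading shift built into the graded Specht modules of \cite{BKW}. Different sources adopt different normalizations of the quantum parameter $v$ (and occasionally of $v \leftrightarrow v^{-1}$), and a mismatch there would flip $d_{\bla\bmu}(v)$ to $d_{\bla\bmu}(v^{-1})$; one must check that the conventions fixed in Section~\ref{S:Fock} via \cite{LLT} are the same as those under which \cite{BK:Decomp} is stated. Once this compatibility is pinned down, the theorem follows immediately by citing \cite{BK:Decomp} for the graded equality and \cite{Ariki} for its $v=1$ specialization, and no further argument is needed.
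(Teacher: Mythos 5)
Your proposal is correct and matches the paper's treatment: the paper gives no proof of Theorem~\ref{T:Ariki}, citing \cite{Ariki} for the ungraded statement and \cite{BK:Decomp} for its graded refinement, exactly as you describe. Your additional remarks on matching $M^{\ms}$ with $V(\Upsilon)$, the indexing by Kleshchev bipartitions, and the $v$-normalization conventions are sensible bookkeeping but go beyond what the paper records.
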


The transition coefficients give us the decomposition numbers when $p=0$ and can be considered as a first approximation to the decomposition numbers when the field is arbitrary; as we shall see below, there are situations when this first approximation is correct.  In order to compute the transition coefficients, we now consider the action of $f_i \in \mathcal{U}$ on a basis element $\Cr{\bla} \in \mathcal{F}^{\ms}$. 

If $d>0$ and $\bnu,\bla \in \La^2$, write $\bnu \xrightarrow{d:i} \bla$ if $[\bla]$ is formed from $[\bnu]$ by adding $d$ nodes all of residue $i \in I$. 
If $\bnu \xrightarrow{d:i} \bla$ set
\begin{multline*}
N(\bnu,\bla) =\sum_{\mathfrak{n} \in [\bla] \setminus [\bnu]} \#\{\mathfrak{m} \text{ an addable $i$-node of } \bnu \text{ with } \mathfrak{n} \lhd \mathfrak{m}\}
-\#\{\mathfrak{m}\text{ a removable $i$-node of } \bla \text{ with } \mathfrak{n} \lhd \mathfrak{m}\}. 
\end{multline*}
For $d >0$ and $i \in I$, define $f^{(d)}_i = f^d_i\,/\,[d]! \in \mathcal{U}$, the quantum divided power of $f^d_i$. Then if $\bnu \in \La^2$, 
\[f^{(d)}_i \Cr{\bnu} = \sum_{\bnu \xrightarrow{d:i} \bla} v^{N(\bnu,\bla)} \Cr{\bla}.\] 

\begin{lemma} [{\cite[Proposition 2.3]{LyleRuff}}] \label{L:ArikiV}
Suppose $\bnu \in \La^{\ms}$ is such that if $\bsig \sim_{\ms} \bnu$ then $[S^{\bsig}:D^{\bnu}]_v = d_{\bsig\bnu}(v)$.
Let $i_1,\ldots,i_t \in I$ and $d_1,\ldots,d_t >0$. Suppose 
\[f^{(d_t)}_{i_t} \ldots f^{(d_1)}_{i_1} P^{\bnu} = \Cr{\bmu} + \sum_{\bla \ne \bmu} c_{\bla\bmu}(v) \Cr{\bla}\]
where $c_{\bla\bmu}(v) \in v \mathbb{N}[v]$ for all $\bla \ne \bmu$. 
Then $\bmu \in \La^{\ms}$ and if $\bla \neq \bmu$ then $[S^{\bla}:D^{\bmu}]_v=c_{\bla\bmu}(v)$. 
\end{lemma}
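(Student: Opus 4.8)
My plan is to prove this module-theoretically, carrying the identity through the categorification of the Fock space so that the conclusion holds over a field of arbitrary characteristic. The starting point is to reinterpret the hypothesis on $\bnu$. By the graded cellular structure on $\h$, the projective cover $P(D^\bnu)$ has a graded Specht filtration with $[P(D^\bnu):S^\bsig]_v = [S^\bsig:D^\bnu]_v$, so the assumption $[S^\bsig:D^\bnu]_v = d_{\bsig\bnu}(v)$ says exactly that the class of $P(D^\bnu)$ in $\bigoplus_n[\h_n\text{-mod}]$ corresponds, under the identification $[S^\bla]\leftrightarrow \Cr\bla$, to the canonical basis vector $P^\bnu \in \F^\ms$. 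This is the one place where the hypothesis on $\bnu$ is used.

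Next I would apply the divided power $i$-induction functors. For each $i\in I$ and $d>0$ there is an exact functor $F_i^{(d)}$ categorifying the action of $f_i^{(d)}$ on $\F^\ms$; this is the categorification underlying Theorem~\ref{T:Ariki}, and at the level of Grothendieck groups it is valid over any field. Since $F_i^{(d)}$ is a summand of an induction functor, it sends projectives to projectives. Hence $Q := F_{i_t}^{(d_t)}\cdots F_{i_1}^{(d_1)} P(D^\bnu)$ is projective, and in the Grothendieck group $[Q] = f_{i_t}^{(d_t)}\cdots f_{i_1}^{(d_1)} P^\bnu = \Cr\bmu + \sum_{\bla\ne\bmu} c_{\bla\bmu}(v)\,\Cr\bla$. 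Reading off its Specht filtration gives $[Q:S^\bmu]_v = 1$ and $[Q:S^\bla]_v = c_{\bla\bmu}(v) \in v\N[v]$ for $\bla \ne \bmu$.

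It remains to identify $Q$ with a single indecomposable projective. I would write $[Q] = \sum_{\bsig \in \La^\ms} m_\bsig(v)\,[P(D^\bsig)]$ and expand each $[P(D^\bsig)]$ in the Specht basis using the graded decomposition numbers $[S^\bla:D^\bsig]_v$, which are unitriangular with respect to dominance (diagonal entry $1$, and $[S^\bla:D^\bsig]_v = 0$ unless $\bla \trianglerighteq \bsig$). Because induction preserves the duality on $\h$-modules and indecomposable projectives are self-dual up to shift, the multiplicities $m_\bsig(v)$ are symmetric under $v \leftrightarrow v^{-1}$. Now take $\bsig_0$ minimal in dominance among those with $m_{\bsig_0}\ne 0$: by unitriangularity the coefficient of $\Cr{\bsig_0}$ in $[Q]$ is then exactly $m_{\bsig_0}(v)$, which must equal $c_{\bsig_0\bmu}(v)$. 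If $\bsig_0\ne\bmu$ this lies in $v\N[v]$, contradicting the symmetry of $m_{\bsig_0}(v)$ unless it vanishes; hence $\bsig_0=\bmu$, forcing $\bmu\in\La^\ms$ and $m_\bmu(v)=1$. Subtracting $[P(D^\bmu)]$ and repeating the extremal argument kills every remaining $m_\bsig$, so $Q\cong P(D^\bmu)$. Therefore $[S^\bla:D^\bmu]_v = [P(D^\bmu):S^\bla]_v = [Q:S^\bla]_v = c_{\bla\bmu}(v)$, as required.

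The main obstacle is this last identification: the passage from the numerical data $[Q:S^\bla]_v = c_{\bla\bmu}(v)$ to the assertion that $Q$ is indecomposable and is the projective cover of $D^\bmu$. The extremal-coefficient argument succeeds precisely because it plays the positivity hypothesis $c_{\bla\bmu}\in v\N[v]$ off against the self-duality of $Q$ — the module-theoretic shadow of the bar-invariance of $G = f_{i_t}^{(d_t)}\cdots f_{i_1}^{(d_1)} P^\bnu$ — and it is exactly the graded analogue of the uniqueness characterisation of the canonical basis. Pinning down the self-duality of $Q$ and the unitriangularity of the graded decomposition matrix in the form needed is the technical heart of the proof.
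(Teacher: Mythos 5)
The paper offers no proof of this lemma to compare against: it is quoted from Lyle--Ruff, Proposition~2.3, with the remark that the proof of the slightly more general statement is identical. Your overall architecture --- categorify $f_{i_t}^{(d_t)}\cdots f_{i_1}^{(d_1)}$ by divided-power $i$-induction functors, note that these are exact and preserve projectives, translate the hypothesis on $\bnu$ into the statement that $[P(D^{\bnu})]$ maps to the canonical basis vector $P^{\bnu}$, and then identify the projective $Q$ with $P(D^{\bmu})$ by playing positivity off against a symmetry --- is the standard route and, as far as one can tell, the one underlying the cited result. The minimal-versus-maximal choice of $\bsig_0$ is only a matter of which unitriangularity convention you adopt, and you are internally consistent about it.

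There is, however, a genuine gap at the step you yourself identify as the technical heart. You deduce that each multiplicity $m_{\bsig}(v)$ is symmetric under $v\leftrightarrow v^{-1}$ from the assertion that ``indecomposable projectives are self-dual up to shift''. The shift is precisely the problem: the blocks of the cyclotomic KLR algebra are graded symmetric with a homogeneous symmetrizing form of nonzero degree (twice the defect), so the contravariant duality sends $P(D^{\bsig})$ to $P(D^{\bsig})\langle c\rangle$ with $c\neq 0$ in every case of interest, and the induction functors commute with duality only up to further block-dependent shifts. A Laurent polynomial that is symmetric about a nonzero centre can perfectly well be a nonzero element of $v\N[v]$, so the extremal contradiction ``$m_{\bsig_0}(v)\in v\N[v]$ and symmetric, hence zero'' evaporates unless you prove the centre is $0$ --- and note that the bar involution on $\F^{\ms}$ cannot be induced by a module duality fixing every $[P(D^{\bsig})]$, since that would force every graded decomposition matrix to coincide with the canonical basis matrix. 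The standard repair splits the argument in two. First work entirely in the Fock space: $G=f_{i_t}^{(d_t)}\cdots f_{i_1}^{(d_1)}P^{\bnu}$ is bar-invariant because $P^{\bnu}$ is and the $f_i^{(d)}$ commute with the bar involution, so the hypothesis $G\in \Cr{\bmu}+\sum_{\bla\neq\bmu}v\N[v]\Cr{\bla}$ and the uniqueness characterization of the canonical basis give $G=P^{\bmu}$; this already yields $\bmu\in\La^{\ms}$ and $c_{\bla\bmu}=d_{\bla\bmu}$. Then identify $Q$ with $P(D^{\bmu})$ not via self-duality of $Q$ but via the graded adjustment matrix: the modular graded decomposition matrix is the canonical basis matrix times a unitriangular matrix $A$ with entries in $\N[v,v^{-1}]$, so $[Q]=P^{\bmu}$ becomes $Am=e_{\bmu}$ with $m\geq 0$, and unitriangularity plus positivity force $m=e_{\bmu}$, i.e.\ $Q\cong P(D^{\bmu})$. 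With that substitution your argument closes; as written, the self-duality step does not.
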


The statement of Lemma~\ref{L:ArikiV} given in~\cite{LyleRuff} is only for the case when $P^{\bnu}=\Cr{\bnu}$, but the proof for the slightly more general situation above is identical. 
Suppose $i,j \in I$. If $i \leq j$, define 
\[f_{i,j} = f_{j}  f_{j-1} \ldots f_{i+1}.\] 
If $e$ is finite and $i>j$, define 
\[f_{i,j} = f_{j}  f_{j-1}\ldots f_0 f_{e-1}\ldots f_{i+2} f_{i+1}.\] 

Suppose that $B$ is a core block and that $(\mc,\bb)$ is a reduced pair for $B$. We say that $\bb$ is flat if $e=\infty$ or if $e$ is finite and there exists $m' \in I$ and $b \in \Z$ such that $b_m=b$ if $m \geq m'$ and $b_m=b+1$ otherwise. Note that if $e$ is finite then in this case, $$m'\prec m'+1 \prec \ldots \prec e-1 \prec 0 \prec1 \prec \ldots \prec m'-1.$$ 

\begin{lemma} \label{L:ijinduce}
Suppose that $B$ is a core block and that $\re=(\mc,\bb)$ is a reduced pair for $B$ with $\bb$ flat. Let $\bnu \in B$. Suppose that $i\prec j$ with 
\[l^{\mc}_{ik}(\bnu) = b_i+1, \qquad l^{\mc}_{jk}(\bnu)=b_j, \qquad l^{\mc}_{m1}(\bnu)=l^{\mc}_{m2}(\bnu), \qquad \text{ for } k=1,2 \text{ and } i \prec m\prec j.\]
 Let $I_1=\{i\preceq m \prec j \mid l^{\mc}_{m1}(\bnu)=l^{\mc}_{m2}(\bnu)=b_m+1\}=\{i_1,i_2,\ldots,i_t\}$, where $i_1\prec \ldots \prec i_t$. Set $i_{t+1}=j$ and define 
\[f = f_{i_1,i_2} \ldots f_{i_{t-1},i_{t}} f_{i_t,i_{t+1}}\in \mathcal{U}.\]
Then \[f \Cr{\bnu} = \Cr{\bla_2} +  v \Cr{\bla_1}\]
where 
\begin{align*} l^{\mc}_{mk}(\bla_1) & =
\begin{cases}
l^{\mc}_{mk}(\bnu)-1, & m=i \text{ and } k=1, \\
l^{\mc}_{mk}(\bnu)+1, & m=j \text{ and } k=1, \\
l^{\mc}_{mk}(\bnu), & \text{otherwise}, 
\end{cases} & 
l^{\mc}_{mk}(\bla_2) & =
\begin{cases}
l^{\mc}_{mk}(\bnu)-1, & m=i \text{ and } k=2, \\
l^{\mc}_{mk}(\bnu)+1, & m=j \text{ and } k=2, \\
l^{\mc}_{mk}(\bnu), & \text{otherwise},
\end{cases}
\intertext{so that}
(\delta_{\bla_1})_m & = \begin{cases} +, & m=i, \\
-, & m=j, \\
(\delta_{\bnu})_m, & \text{otherwise},
\end{cases} & 
(\delta_{\bla_2})_m & = \begin{cases} -, & m=i, \\
+, & m=j, \\
(\delta_{\bnu})_m, & \text{otherwise}.
\end{cases}
\end{align*}
\end{lemma}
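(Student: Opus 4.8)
The plan is to compute the action of the operator $f = f_{i_1,i_2}\ldots f_{i_t,i_{t+1}}$ on $\Cr{\bnu}$ directly, by tracking how each constituent operator $f_{i_l,i_{l+1}}$ moves beads on the two abacus configurations. Recall from Section~\ref{S:Fock} that $f_{a,b}$ (for $a \prec b$) is a product of single $f_c$'s which, by the lemma translating node-operations into bead-movements, has the effect of advancing beads across runners. The key observation is that under the flatness hypothesis, runners $i_l, i_l+1, \ldots, i_{l+1}$ are arranged in a predictable $\prec$-order, so that the composite operator $f_{i_l,i_{l+1}}$ should, on the relevant component, shift the bead configuration from having its lowest filled row on runner $i_l$ to having it on runner $i_{l+1}$. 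First I would set up the abacus picture of $\bnu$: on each of the two runners-configurations, the positions $\prec i$ are ``full'' (row $b_m+1$), position $i$ is full on both components, position $j$ is empty on both (row $b_j$), and the intermediate positions in $I_1$ are full while the rest are empty, with both components agreeing strictly between $i$ and $j$.

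Next I would verify the single-step computation: applying $f_{i_t,i_{t+1}} = f_{i_t, j}$ to $\Cr{\bnu}$ adds a connected strip of nodes that walks a bead from runner $i_t$ up to runner $j$. Because there is a choice of \emph{which} component ($k=1$ or $k=2$) receives this strip, and the two components are otherwise identical on the intermediate runners, the output is a sum of exactly two terms, one for each component; the exponent of $v$ in each term is governed by $N(\bnu,\bla)$, which counts addable-minus-removable $i$-nodes lying $\lhd$-above the added nodes. I expect the arithmetic of $N$ to yield exponent $0$ on one component (the one giving $\bla_2$) and exponent $1$ on the other (giving $\bla_1$), matching the asserted $\Cr{\bla_2}+v\Cr{\bla_1}$. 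The telescoping structure $f_{i_1,i_2}\cdots f_{i_t,i_{t+1}}$ should then be handled inductively: each successive $f_{i_l,i_{l+1}}$ moves the ``gap'' created by the previous operator further down, and I would check that no new monomials are created and no cancellation occurs, so that the two-term output is preserved throughout, with the $v$-exponents accumulating correctly to give exactly $v^0$ and $v^1$.

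The computation of the final bead-positions $l^{\mc}_{mk}(\bla_1)$ and $l^{\mc}_{mk}(\bla_2)$, and the translation into $\delta_{\bla_1}, \delta_{\bla_2}$ via the defining formula $(\delta^{\re}_{\bla})_m = l^{\mc}_{\pi(m)2}(\bla)-l^{\mc}_{\pi(m)1}(\bla)$, should then be a bookkeeping exercise: the net effect of the whole operator on component $k$ is to lower the lowest row on runner $i$ by one and raise the lowest row on runner $j$ by one (all intermediate runners returning to their original configuration because they are traversed in both directions by consecutive operators), giving $(\delta_{\bla_1})_i=+$, $(\delta_{\bla_1})_j=-$ when $k=1$ and symmetrically for $\bla_2$ when $k=2$.

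\textbf{The main obstacle} I anticipate is controlling the $v$-powers $N(\bnu,\bla)$ precisely and confirming that applying the divided-power-free operators $f_c$ (rather than $f_c^{(d)}$) produces \emph{no spurious intermediate terms} that would survive to the end. Since the $f_{i_l,i_{l+1}}$ are built from single generators $f_c$ with $d=1$, at each stage there are in principle several addable $i$-nodes, so the intermediate sums could branch; I would need to argue that the flatness of $\bb$ together with the hypothesis $l^{\mc}_{m1}(\bnu)=l^{\mc}_{m2}(\bnu)$ for $i\prec m\prec j$ forces almost all intermediate branches to be added with $\lhd$-positions that make the $v$-weight computation collapse, or that the only surviving terms at each internal runner are the ones continuing the strip. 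Establishing this branch-control carefully — essentially showing that the intermediate runners offer a unique way to propagate the strip on each fixed component — is where the real work lies, and it is exactly here that the ordering induced by $\bb$ flat is used.
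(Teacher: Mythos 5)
Your proposal follows essentially the same route as the paper's proof: a direct inductive computation of $f\Cr{\bnu}$ one factor $f_{i_l,i_{l+1}}$ at a time, with the rightmost factor producing the two-term branching (one term per component, with $v$-exponents $0$ and $1$ coming from the $N$-statistic) and the flatness of $\bb$ ensuring that each subsequent factor preserves the two-term shape. The branch-control issue you single out as the main obstacle is resolved in the paper exactly along the lines of your second alternative: inside each factor $f_{i_s,i_{s+1}}$ the intermediate generators do branch onto the wrong component, but the final generator $f_{i_{s+1}}$ annihilates that branch (the wrong component has no addable $i_{s+1}$-node) and contributes a compensating $v^{-1}$ on the surviving one, so each factor acts with coefficient exactly $1$ and the overall output stays $\Cr{\bla_2}+v\Cr{\bla_1}$.
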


\begin{proof}
For $1 \leq s \leq t$ and $k=1,2$, define $\bsig_k^s$ by 
\[l^{\mc}_{mk'}(\bsig_k^s)=
\begin{cases}
l^{\mc}_{mk'}(\bnu)-1, & m=i_s \text{ and } k=k', \\
l^{\mc}_{mk'}(\bnu)+1, & m=j \text{ and } k=k', \\
l^{\mc}_{mk'}(\bnu), & \text{otherwise},
\end{cases}\]
so that $\bla_k = \bsig_k^1$ for $k=1,2$. We claim that if $1 \leq s \leq t$ then 
\[f_{i_{s},i_{s+1}} \ldots f_{i_t,i_{t+1}}\Cr{\bnu}  = \Cr{\bsig_2^s} + v \Cr{\bsig_1^s}.\]
Since $\bb$ is flat, $[\bnu]$ has exactly two addable $(i_t+1)$-nodes, one in the first component and one in the second, and no removable $(i_t+1)$-nodes. If we add an $(i_t+1)$-node to component $k$, for $k=1,2$, then the subsequent bipartition has exactly one addable $(i_t+2)$-node, in component $k$, and no removable $(i_t+2)$-nodes. Continuing in this way, we see that 
\[f_{i_t,i_{t+1}} \Cr{\bnu} = \Cr{\bsig^t_2} + v \Cr{\bsig^t_1}.\]
Now suppose $1 \leq s \leq t-1$ and consider $f_{i_s,i_{s+1}} \Cr{\bsig^{s+1}_k}$, where $k = 1,2$. 
If $i_s+1=i_{s+1}$ then $[\bsig^{s+1}_k]$ has exactly one addable $i_{s+1}$-node, which is in component $k$, and no removable $i_{s+1}$-nodes, and so $f_{i_{s+1}} \Cr{\bsig^{s+1}_k} = \Cr{\bsig^{s}_k}$. Otherwise, as above, 
\[f_{i_s,i_{s+1}-1} \Cr{\bsig^{s+1}_k} = \Cr{\btau_2} + v \Cr{\btau_1},\]
where $[\btau_{k'}]$ is obtained by successively adding nodes to component $k'$ of $[\bsig^{s+1}_k]$, for $k'=1,2$. However, if $k' \neq k$, $[\btau_{k'}]$ does not have any addable $i_{s+1}$-nodes so $f_{i_{s+1}}\Cr{\btau_{k'}}=0$. If $k=k'=1$ (resp. $k=k'=2$) then $[\btau_{k}]$ has an addable $i_{s+1}$-node on component $1$ (resp. on component $2$) and a removable $i_{s+1}$-node on component $2$ (resp. on component 1) and so $f_{i_{s+1}} \Cr{\btau_{k'}} = v^{-1} \Cr{\bsig_{k'}}$ (resp. $f_{i_{s+1}}\Cr{\btau_{k'}} = \Cr{\bsig_{k'}}$). So in both cases we have 
\[f_{i_s,i_{s+1}} \Cr{\bsig^{s+1}_k} = \Cr{\bsig^{s}_k}.\]
The lemma follows. 
\end{proof}

\subsection{Decomposition matrices} \label{S:Decomp}

\begin{theorem} \label{T:Flat}
Let $B$ be a core block and suppose that $\re=(\mc,\bb)$ is a reduced pair for $B$ with $\bb$ flat. 
Let $\bla,\bmu \in B$ with $\bmu \in \La^{\mc}$. Then
\[ [S^{\bla}:D^{\bmu}]_v = d_{\bla\bmu}(v) = \begin{cases} 
v^{\ell(\delta_{\bla},\delta_{\bmu})}, & \delta_{\bla} \ra \; \delta_{\bmu}, \\
0, & \text{otherwise}.
\end{cases}\]
\end{theorem}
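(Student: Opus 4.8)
The plan is to induct on the weight $m_B = \wt(B) = |\St(\delta_{\bmu})|$ (equal since $\delta_{\bmu} \in \Delta_0$, so the number of arcs is $\min(n_B,p_B)$), building the canonical basis element $P^{\bmu}$ one arc at a time via Lemma~\ref{L:ijinduce} and transferring the result to decomposition numbers through Lemma~\ref{L:ArikiV}. For $S \subseteq \St(\delta_{\bmu})$ write $\bmu^S$ for the element of $B$ with $\delta_{\bmu^S}=(\delta_{\bmu})^S$, which exists by Corollary~\ref{C:GenBlock}; the asserted formula then reads $P^{\bmu} = \sum_{S \subseteq \St(\delta_{\bmu})} v^{|S|}\Cr{\bmu^S}$ and $[S^{\bmu^S}:D^{\bmu}]_v = v^{|S|}$.

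When $m_B = 0$ we have $|B|=1$, so $\hat B$ is simple, $P^{\bmu}=\Cr{\bmu}$ and $[S^{\bmu}:D^{\bmu}]_v=1$; the formula holds trivially and the hypothesis of Lemma~\ref{L:ArikiV} is satisfied for $\bmu$. For the inductive step, suppose $m_B>0$ and fix $\bmu \in B$ with $\bmu \in \La^{\mc}$, so $\delta_{\bmu}\in\Delta_0$ by Proposition~\ref{P:Klesh2} and $\St(\delta_{\bmu})\neq\emptyset$. I would choose an innermost arc $(i,j)\in\St(\delta_{\bmu})$, that is, one with $(\delta_{\bmu})_m=0$ for $i \prec m \prec j$, and let $\bnu$ be the bicore obtained from $\bmu$ by making runner $i$ full and runner $j$ empty, so that $(\delta_{\bnu})_i=(\delta_{\bnu})_j=0$ and $\delta_{\bnu}$ agrees with $\delta_{\bmu}$ elsewhere. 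Then $\bnu$ lies in a core block $B^-$ for which $(\mc,\bb)$ is again a flat reduced pair, $\delta_{\bnu}\in\Delta_0$ so $\bnu\in\La^{\mc}$, and $\wt(B^-)=m_B-1$; hence the theorem holds for $B^-$ by induction. In particular $\St(\delta_{\bnu})=\St(\delta_{\bmu})\setminus\{(i,j)\}$ and $P^{\bnu}=\sum_{S' \subseteq \St(\delta_{\bnu})} v^{|S'|}\Cr{\bnu^{S'}}$.

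Let $f$ be the operator attached to $(i,j)$ and $\bnu$ in Lemma~\ref{L:ijinduce}. The core of the argument is to apply this lemma simultaneously to every term $\Cr{\bnu^{S'}}$ of $P^{\bnu}$. Because $(i,j)$ is innermost, no arc of $\St(\delta_{\bnu})$ has an endpoint among the runners $i \preceq m \preceq j$, so each flip $S'$ leaves the configuration on those runners untouched; hence every $\bnu^{S'}$ satisfies the hypotheses of Lemma~\ref{L:ijinduce} with the same operator $f$, giving
\[f\Cr{\bnu^{S'}} = \Cr{\bmu^{S'}} + v\,\Cr{\bmu^{S' \cup \{(i,j)\}}}.\]
Summing over $S'$ and regrouping subsets of $\St(\delta_{\bmu})$ according to whether they contain $(i,j)$ yields
\[f P^{\bnu} = \sum_{S \subseteq \St(\delta_{\bmu})} v^{|S|}\,\Cr{\bmu^S} = \Cr{\bmu} + \sum_{\bla \neq \bmu} c_{\bla\bmu}(v)\Cr{\bla}, \qquad c_{\bla\bmu}(v) \in v\mathbb{N}[v].\]
Since $f$ is a product of the Chevalley generators $f_m=f_m^{(1)}$, Lemma~\ref{L:ArikiV} applies with this $\bnu$ and gives $\bmu \in \La^{\ms}$ together with $[S^{\bla}:D^{\bmu}]_v = c_{\bla\bmu}(v)$, which is exactly the claimed value $v^{\ell(\delta_{\bla},\delta_{\bmu})}$ when $\delta_{\bla} \ra \delta_{\bmu}$ and $0$ otherwise, valid over any field. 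Specialising to characteristic $0$ and invoking Ariki's Theorem~\ref{T:Ariki} identifies these decomposition numbers with the transition coefficients $d_{\bla\bmu}(v)$, which therefore take the same value.

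I expect \emph{the one subtle point} to be the simultaneous application of Lemma~\ref{L:ijinduce} to all terms of $P^{\bnu}$: one must check that flipping the arcs in any $S' \subseteq \St(\delta_{\bnu})$ disturbs neither the (full, empty) pattern at $i,j$ nor the balanced zeros between them, so that the single operator $f$ acts correctly on each basis vector and its two outputs reassemble precisely into $\Cr{\bmu^{S'}}$ and $\Cr{\bmu^{S' \cup \{(i,j)\}}}$. The supporting bookkeeping—verifying that $\St(\delta_{\bnu})=\St(\delta_{\bmu})\setminus\{(i,j)\}$, that $\bnu$ is a bicore with flat reduced pair $(\mc,\bb)$ and weight $m_B-1$, and the translation between runner indices and the $\prec$-sorted positions indexing $\delta$—is routine but must be pinned down to make the regrouping of subsets exact.
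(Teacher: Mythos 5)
Your proposal is correct and follows essentially the same route as the paper's proof: induction on $|\St(\delta_{\bmu})|$, stripping an innermost arc $(i,j)$ to form $\bnu$ in a block of smaller weight, applying Lemma~\ref{L:ijinduce} term by term to $P^{\bnu}$, and concluding via Lemma~\ref{L:ArikiV} together with Ariki's theorem in characteristic $0$. The ``subtle point'' you single out --- that every summand $\Cr{\bnu^{S'}}$ of $P^{\bnu}$ satisfies the hypotheses of Lemma~\ref{L:ijinduce} with the same operator $f$ because flips of the remaining arcs do not disturb the runners between $i$ and $j$ --- is exactly the step the paper relies on (and states even more tersely).
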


\begin{proof}
We work by induction on $|\St(\bmu)|=m_B$. If $|\St(\bmu)|=0$ then $B$ is simple and the result holds. So suppose $|\St(\bmu)|>0$ and the theorem holds for all core blocks $B'$ with $m_{B'}<m_B$. Choose $(i,j) \in \St(\bmu)$ such that there does not exist $(i',j') \in \St(\bmu)$ with $i<i'<j'<j$. Let $\bnu$ be the bipartition defined by
\[l^{\mc}_{mk}(\bnu) = \begin{cases}
l^{\mc}_{mk}(\bmu) +1, & m=i \text{ and } k=2, \\
l^{\mc}_{mk}(\bmu)-1, & m=j \text{ and } k=2, \\
l^{\mc}_{mk}(\bmu), & \text{otherwise}.
\end{cases} \]
Then $\bnu \in B'$ for some core block $B'$ such that $(\mc,\bb)$ is a reduced pair for $B'$. Also $\St(\bnu) = \St(\bmu) \setminus \{(i,j)\}$ so $\bnu \in \La^{\ms}$. Hence $\bnu$ satisfies the conditions of the inductive hypothesis, so by Theorem~\ref{T:Ariki}, 
\[P^{\bnu} = \sum_{\delta_{\bsig} \ra \delta_{\bnu}} v^{\ell(\bsig,\bnu)} \Cr{\bsig}.\]
The bipartition $\bnu$ with reduced pair $(\mc,\bb)$, together with the pair $(i,j)$ described above, satisfies the conditions of Lemma~\ref{L:ijinduce}. 
We define $f$ as in that lemma and compute each term $f \Cr{\bsig}$ that appears in the sum above, so that 
\[f P^{\bnu} = \sum_{\delta_{\bla} \ra \delta_{\bmu}} v^{\ell(\bla,\bmu)} \Cr{\bla}.\]
Since $\ell(\bla,\bmu)>0$ for all $\bla \neq \bmu$ which appear in the sum, we have satisfied the conditions of Lemma~\ref{L:ArikiV} and so the theorem holds. 
\end{proof}

Since there is always a reduced pair $(\ms,\bb)$ with $\bb$ flat when $e=\infty$ or $n<e$, this describes the block decomposition matrices for core blocks in these cases, recovering previous results; we note in particular the similarity between Theorem~\ref{T:Flat} and~\cite[Theorem~3]{LM:FockI}. For the rest of this section, we assume that $e$ is finite. 
Suppose that $B$ and $B'$ are any two core blocks with $n_B=n_{B'}$ and $p_{B}=p_{B'}$. Then we have a bijection $\Phi:B \rightarrow B'$ such that $\delta_{\bla}$ and $\delta_{\Phi(\bla)}$ are essentially the same for all $\bla \in B$. If $\bmu \in B$ then by Lemma~\ref{P:Klesh2}, we have that $\bmu \in \La^{\ms}$ if and only if $\Phi(\bmu) \in \La^{\ms}$. 

Given two core blocks $B$ and $B'$, we say that $B$ and $B'$ are $\delta$-equivalent if $n_B=n_{B'}$ and $p_B=p_{B'}$ and if the map $\Phi$ defined above preserves the block decomposition matrix of $B$, that is, if $\bla,\bmu \in B$ with $\bmu \in \La^{\ms}$ then $[S^{\bla}:D^{\bmu}]_v = [S^{\Phi(\bla)}:D^{\Phi(\bmu)}]_v$. In fact, we will show that any two core blocks $B$ and $B'$ with $n_B=n_{B'}$ and $p_B=p_{B'}$ are $\delta$-equivalent. Theorem~\ref{T:Flat} shows that this also holds when $e=\infty$. 

\begin{lemma} \label{L:Scopesy}
Let $B$ be a core block and suppose that $\re=(\mc,\bb)$ is a reduced pair for $B$.
Suppose that $i\in I$ and 
\[b_{i+1} > b_{i}+x_i \qquad \text{ where } x_i = \begin{cases} 1, & i=e-1, \\ 0, & \text{otherwise}.\end{cases}\]
Define a map $\Phi_i:B \rightarrow \La^2$ which sends $\bla \in B$ to the bipartition whose Young diagram is obtained by removing all possible $(i+1)$-nodes from $[\bla]$. Then $\Phi_i$ gives a bijection between $B$ and a core block $B'$ such that if $\bla\in B$ then $\delta_{\bla} = \delta_{\Phi_i(\bla)}$. 
\end{lemma}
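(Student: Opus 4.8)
The plan is to translate $\Phi_i$ into an operation on abacus configurations and then verify the four assertions (multicore image, single image block, bijectivity, preservation of $\delta$) in turn. Write $l^{\mc}_{mk}=l^{\mc}_{mk}(\bla)$, so each lies in $\{b_m,b_m+1\}$, and read $i+1$ modulo $e$. By the bead--node correspondence of the first lemma of Section~\ref{S:Abacus}, removing an $(i+1)$-node moves a bead from runner $i+1$ back onto runner $i$. When $i\neq e-1$ the hypothesis $b_{i+1}>b_i$ forces $l^{\mc}_{i+1,k}\geq l^{\mc}_{ik}$, so the removable $(i+1)$-nodes of component $k$ are exactly the beads of runner $i+1$ lying below the foot of runner $i$; removing all of them transfers these beads to runner $i$ and leaves each runner occupied by a gap-free, top-justified set of beads. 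Hence $\Phi_i$ acts on $B$ precisely as the interchange of runners $i$ and $i+1$ on both components (for $i=e-1$ the same analysis, using $b_0>b_{e-1}+1$, interchanges runners $e-1$ and $0$ together with a one-row vertical shift). In particular the image is again a bicore and $(\mc,\bb')$ is a reduced pair for it, where $\bb'$ exchanges $b_i\leftrightarrow b_{i+1}$ (respectively sets $b'_{e-1}=b_0-1$, $b'_0=b_{e-1}+1$ in the wrap-around case).

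First I would show the image lies in a single block. Every removed node has residue $i+1$, so $\Res_{\mc}(\Phi_i(\bla))$ is $\Res_{\mc}(\bla)$ with $N(\bla)$ copies of $i+1$ deleted, where for $i\neq e-1$ we have $N(\bla)=\sum_{k}(l^{\mc}_{i+1,k}-l^{\mc}_{ik})$ (and a similar count in the wrap-around case). The \emph{key point} is that $N(\bla)$ is constant on $B$: writing $l^{\mc}_{mk}=b_m+x_{mk}$ with $x_{mk}\in\{0,1\}$, Lemma~\ref{L:GenBlock} shows that the operations $s_{ij}$ generating $B$ alter $l^{\mc}_{mk}$ only at runners $m$ with $(\delta_{\bla})_m\neq 0$, and there only flip a pair $(0,1)\leftrightarrow(1,0)$. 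Thus both the set of runners with $x_{m1}=x_{m2}$ and the common value at such runners are invariants of $B$, so $x_{i1}+x_{i2}$ and $x_{i+1,1}+x_{i+1,2}$ are constant on $B$, and hence so is $N$. Since $\Res_{\mc}(\bla)$ is the same for all $\bla\in B$, all images share a residue set and lie in one $\sim_{\mc}$-class $B'$, which is a core block by the converse part of Lemma~\ref{L:BaseTuple} applied with $(\mc,\bb')$.

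For bijectivity I would use that interchanging two runners is an involution on abacus configurations, so $\Phi_i$ has a two-sided inverse given by the reverse runner interchange; the same residue count (now adding back $N$ copies of $i+1$) shows this inverse carries $B'$ into $B$, whence $\Phi_i\colon B\to B'$ is a bijection. Finally, for $\delta_{\bla}=\delta_{\Phi_i(\bla)}$ I would compare the two readouts directly. Put $d_m=l^{\mc}_{m2}-l^{\mc}_{m1}$; the runner interchange sends $d_i\leftrightarrow d_{i+1}$ and fixes the other $d_m$ (the vertical shift in the wrap-around case cancels in the difference), so $d_m(\Phi_i(\bla))=d_{\theta(m)}(\bla)$ where $\theta$ transposes $i$ and $i+1$. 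It therefore suffices to check that the order $\prec'$ attached to $\bb'$ equals the $\theta$-pullback of the order $\prec$ attached to $\bb$, i.e. $m\prec' m'\iff\theta(m)\prec\theta(m')$; then the $\prec'$-ordered list of the $d_m(\Phi_i(\bla))$ agrees termwise with the $\prec$-ordered list of the $d_m(\bla)$, which is exactly the asserted equality.

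The main obstacle is this last order comparison, since the tie-breaking rule in $\prec$ uses the integer order on $I$, which $\theta$ need not respect; it must be settled by a short case analysis according to whether $m,m'$ lie in $\{i,i+1\}$. The strict inequality $b_i<b_{i+1}$ excludes a tie between $i$ and $i+1$, and for a runner $n\notin\{i,i+1\}$ the adjacency of $i$ and $i+1$ (respectively the explicit shift $b'_{e-1}=b_0-1$, $b'_0=b_{e-1}+1$) makes the two tie-breakings agree. This is precisely where the exact form of the hypothesis $b_{i+1}>b_i+x_i$, and in particular the extra $+1$ when $i=e-1$, is needed.
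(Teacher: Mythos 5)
Your proposal is correct and follows essentially the same route as the paper: identify $\Phi_i$ with the interchange of runners $i$ and $i+1$ (with the row shift when $i=e-1$), use the block-invariance of the removable $(i+1)$-node count via Lemma~\ref{L:GenBlock} to see that the image lies in a single block, and check the compatibility of the orders $\prec$ and $\prec'$ to get $\delta_{\bla}=\delta_{\Phi_i(\bla)}$. The only cosmetic differences are that you establish bijectivity via the involution property of the runner swap rather than via Corollary~\ref{C:GenBlock}, and you spell out the order/tie-breaking comparison (which the paper only writes explicitly in the $i=e-1$ case) in more detail.
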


\begin{proof}
Suppose $\bla\in B$. From our assumptions on $\bb$, $[\bla]$ has no addable $(i+1)$-nodes. If $\bnu \in B$ then by Lemma~\ref{L:GenBlock}, $l^{\ms}_{m1}(\bla)+l^{\ms}_{m2}(\bla) = l^{\ms}_{m1}(\bnu)+l^{\ms}_{m2}(\bnu)$ for all $m \in I$ so that the number of removable $(i+1)$-nodes on both $[\bla]$ and $[\bnu]$ is 
\[l^{\ms}_{i+11}(\bla)-l^{\ms}_{i1}(\bla)-x_i+l^{\ms}_{i+12}(\bla)-l^{\ms}_{i2}(\bla)-x_{i} = l^{\ms}_{i+11}(\bnu)-l^{\ms}_{i1}(\bnu)-x_i+l^{\ms}_{i+12}(\bnu)-l^{\ms}_{i2}(\bnu)-x_i.\]
Hence $\Phi_i(\bla)$ and $\Phi_i(\bnu)$ lie in the same block, $B'$ say. If $i \neq e-1$, $\Phi_i$ acts on the abacus configurations by swapping the runners $i$ and $i+1$ on each component, so that $\delta_{\bla} = \delta_{\Phi_i(\bla)}$ and so by Corollary~\ref{C:GenBlock}, $\Phi_i$ is a bijection between $B$ and $B'$. 
If $i=e-1$, then $(\mc,\bb')$ is a reduced pair for $B'$, where
\[b'_m = \begin{cases} b_{e-1}+1, & m=0, \\
  b_0 -1, & m=e-1,\\
  b_m, & \text{otherwise}.\end{cases}\]
It is straightforward to check that if $m \neq 0,e-1$ then
\[b_m \prec b_{e-1} \iff b_m' \prec b_0', \qquad
b_{e-1} \prec b_m \prec b_0 \iff b'_0 \prec b'_m \prec b'_{e-1}, \qquad
b_0 \prec b_m \iff b'_{e-1} \prec b'_j,\]
so that again $\delta_{\bla} = \delta_{\Phi_i(\bla)}$ and $\Phi_i$ is a bijection between $B$ and $B'$. 
\end{proof}

\begin{lemma} \label{L:Sc2}
  Suppose that we have the conditions of Lemma~\ref{L:Scopesy}. Keep the notation of that lemma so that $B'=\Phi_i(B)$.
Suppose $B'$ is such that if $\bsig,\btau \in B'$ with $\btau \in \La^{\ms}$ then
$[S^{\bsig}:D^{\btau}]_v = d_{\bsig\btau}(v)$. 
  Then $B'$ and $B$ are $\delta$-equivalent. 
\end{lemma}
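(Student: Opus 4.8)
The plan is to realise $\Phi_i$ as the divided-power operator $f_{i+1}^{(w)}$ acting on the Fock space $\F^{\ms}$, where $w$ is the common number of removable $(i+1)$-nodes of the bipartitions in $B$ (this number is constant on $B$ by the computation in the proof of Lemma~\ref{L:Scopesy}), and then to feed the resulting expression into Lemma~\ref{L:ArikiV}. First I would observe that, by Lemma~\ref{L:Scopesy}, $\Phi_i$ satisfies $\delta_{\bla}=\delta_{\Phi_i(\bla)}$ for all $\bla \in B$, so $\Phi_i$ is exactly the bijection $\Phi$ appearing in the definition of $\delta$-equivalence; in particular $n_B=n_{B'}$ and $p_B=p_{B'}$. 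Fix $\bmu \in B$ with $\bmu \in \La^{\ms}$ and set $\btau = \Phi_i(\bmu) \in B'$. Since $\delta_{\btau}=\delta_{\bmu} \in \Delta_0$, Proposition~\ref{P:Klesh2} gives $\btau \in \La^{\ms}$, and the hypothesis on $B'$ says precisely that $\btau$ satisfies the premise of Lemma~\ref{L:ArikiV}: for every $\bsig \sim_{\ms} \btau$ we have $[S^{\bsig}:D^{\btau}]_v = d_{\bsig\btau}(v)$.

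The key computation is the evaluation of $f_{i+1}^{(w)} \Cr{\bsig}$ for $\bsig \in B'$ (reading indices modulo $e$, so that when $i=e-1$ the operator is $f_0^{(w)}$). Using the abacus description of node addition, each $\bsig \in B'$ has exactly $w$ addable $(i+1)$-nodes, namely the positions vacated by $\Phi_i$, and no removable $(i+1)$-nodes; moreover adding an $(i+1)$-node fills a gap on runner $i+1$ and so never creates a new addable $(i+1)$-node. Hence the only bipartition obtainable from $\bsig$ by adding $w$ nodes of residue $i+1$ is $\Phi_i^{-1}(\bsig)$, so the sum defining $f_{i+1}^{(w)}\Cr{\bsig}$ has a single term. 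I would then check that the statistic $N(\bsig, \Phi_i^{-1}(\bsig))$ vanishes: the added nodes $[\Phi_i^{-1}(\bsig)] \setminus [\bsig]$ are simultaneously the addable $(i+1)$-nodes of $\bsig$ and the removable $(i+1)$-nodes of $\Phi_i^{-1}(\bsig)$, so with respect to the order $\lhd$ the two counts in the definition of $N$ cancel term by term. This yields $f_{i+1}^{(w)}\Cr{\bsig} = \Cr{\Phi_i^{-1}(\bsig)}$.

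With this in hand, I would apply $f_{i+1}^{(w)}$ to the canonical basis element $P^{\btau}=\sum_{\bsig \in B'} d_{\bsig\btau}(v)\Cr{\bsig}$ to obtain
\[f_{i+1}^{(w)} P^{\btau} = \sum_{\bla \in B} d_{\Phi_i(\bla),\btau}(v)\, \Cr{\bla}.\]
The term $\bla = \bmu$ has coefficient $d_{\btau\btau}(v)=1$, and for $\bla \neq \bmu$ we have $\Phi_i(\bla) \neq \btau$, so $d_{\Phi_i(\bla),\btau}(v)=[S^{\Phi_i(\bla)}:D^{\btau}]_v$ lies in $\N[v]$ by hypothesis and in $v\Z[v]$ by the unitriangularity of the canonical basis, hence in $v\N[v]$. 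Lemma~\ref{L:ArikiV} then gives $[S^{\bla}:D^{\bmu}]_v = d_{\Phi_i(\bla),\btau}(v) = [S^{\Phi_i(\bla)}:D^{\Phi_i(\bmu)}]_v$ for all $\bla \in B$. Letting $\bmu$ range over the Kleshchev bipartitions in $B$, this is exactly the assertion that $B$ and $B'$ are $\delta$-equivalent.

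I expect the main obstacle to be the single-term evaluation of $f_{i+1}^{(w)}\Cr{\bsig}$ together with the vanishing of $N$; once this local computation is settled, everything else is formal given Lemma~\ref{L:ArikiV}. The only case requiring separate care is $i=e-1$, where the added nodes have residue $0$ and the $\lhd$-order, together with the shift recorded by $x_i$, must be tracked across the wrap from runner $e-1$ to runner $0$; but the structure of the argument is otherwise identical.
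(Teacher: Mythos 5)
Your proposal is correct and follows essentially the same route as the paper: realise $\Phi_i^{-1}$ as the divided power $f_{i+1}^{(t)}$ on the Fock space, note that each $\bsig\in B'$ has exactly $t$ addable and no removable $(i+1)$-nodes so that $f_{i+1}^{(t)}\Cr{\bsig}=\Cr{\Phi_i^{-1}(\bsig)}$, apply this to $P^{\btau}$ and invoke Lemma~\ref{L:ArikiV}. You supply slightly more detail than the paper on the single-term evaluation, the vanishing of $N$, and the verification that the off-diagonal coefficients lie in $v\mathbb{N}[v]$, but the argument is the same.
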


\begin{proof}
Suppose that if $\bla \in B$ then $\bla$ has $t$ removable $(i+1)$-nodes; equivalently if $\bsig \in B'$ then $\bsig$ has $t$ addable $(i+1)$-nodes and no removable $(i+1)$-nodes. Let $\Psi_i = \Phi_i^{-1}$.  
If $\bsig \in B'$ then
\[f^{(t)}_{i+1} \Cr{\bsig} = \Cr{\Psi_i(\bsig)}.\]
Let $\btau \in B'$ be such that $\btau \in \La^{\ms}$. Then
\[f^{(t)}_{i+1} P^{\btau} = f^{(t)}_{i+1} \sum_{\bsig} d_{\bsig\btau}(v) \Cr{\bsig} = \sum_{\bsig} d_{\bsig\btau}(v)\Cr{\Psi_i(\bsig)}.\]
Hence by repeated application of Lemma~\ref{L:ArikiV}, $[S^{\Psi_i(\bsig)}:D^{\Psi_i(\btau)}]_v = d_{\bsig\btau}(v) =[S^{\bsig}:D^{\btau}]_v$ for $\bsig \in B'$. Since $\delta_{\bsig}=\delta_{\Psi_i(\bsig)}$ for all $\bsig \in B'$, the lemma follows. 
  \end{proof}

\begin{lemma} \label{L:Flat}
Suppose that $B$ is a core block with reduced pair $(\mc,\bb)$. Then there exists a core block $B'$ with reduced pair $(\mc,\bb')$ such that $B$ and $B'$ are $\delta$-equivalent and  $\bb'$ is flat. 
\end{lemma}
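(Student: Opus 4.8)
The plan is to reduce an arbitrary core block to one with a flat base tuple by repeatedly applying the Scopes-type bijection $\Phi_i$ from Lemma~\ref{L:Scopesy}, and to track the effect on $\bb$ so as to show that after finitely many steps we reach a flat configuration. Recall that $\bb$ is flat precisely when its entries take only two consecutive values $b$ and $b+1$, with all the $b+1$ entries preceding all the $b$ entries in the cyclic order determined by $\prec$. The idea is to use the operations $\Phi_i$ (for $i \ne e-1$, which simply swap runners $i$ and $i+1$ and hence swap $b_i,b_{i+1}$) to sort the entries of $\bb$, and to use $\Phi_{e-1}$ (which cyclically adjusts $b_0$ and $b_{e-1}$ by $\pm1$) to reduce the spread of the entries of $\bb$.

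First I would observe that Lemma~\ref{L:Scopesy} requires the hypothesis $b_{i+1} > b_i + x_i$, and produces a new reduced pair $(\mc,\bb')$ for the core block $B'=\Phi_i(B)$; crucially, by Lemma~\ref{L:Sc2}, each such move preserves $\delta$-equivalence (once we know the decomposition matrix statement holds for the target block, which will be supplied in the eventual application, but for the present lemma only the $\delta$-equivalence produced by Lemma~\ref{L:Sc2} is needed). So it suffices to produce \emph{some} finite chain of $\Phi_i$-moves ending in a flat block. For $i \ne e-1$, applying $\Phi_i$ when $b_{i+1} > b_i$ swaps the two entries, strictly decreasing the number of ``inversions'' (pairs $i<i'$ with $b_i > b_{i'}$ appearing out of sorted order). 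Thus by finitely many such bubble-sort moves I can arrange that $b_0 \geq b_1 \geq \ldots \geq b_{e-1}$, i.e.\ the sequence is weakly decreasing.

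Next I would use $\Phi_{e-1}$ to collapse the range of values. Once $\bb$ is weakly decreasing, the hypothesis $b_0 > b_{e-1}+1$ for applying $\Phi_{e-1}$ is exactly the statement that the spread $b_0 - b_{e-1}$ exceeds $1$. Applying $\Phi_{e-1}$ sends $b_0 \mapsto b_{e-1}+1$ and $b_{e-1}\mapsto b_0-1$ and fixes the rest; I would check, using the explicit reordering relations for $\prec$ given at the end of the proof of Lemma~\ref{L:Scopesy}, that this move strictly decreases $\sum_m (b_m - \min_m b_m)$, or more simply that it decreases the spread $\max_m b_m - \min_m b_m$. After this move the sequence may no longer be weakly decreasing in the \emph{new} index order, but it still takes values in a strictly smaller range, so I alternate: resort with $\Phi_i$-moves ($i\ne e-1$), then reduce the spread again with $\Phi_{e-1}$. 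Since $\max_m b_m - \min_m b_m$ is a non-negative integer that strictly decreases with each $\Phi_{e-1}$ application and is unaffected by the sorting moves, this process terminates. When no $\Phi_{e-1}$ move is available and the sequence is sorted, the spread is at most $1$ and the entries are weakly decreasing, which is exactly the flat condition (the largest value $b+1$ comes first, then $b$); I would confirm this matches the displayed characterization $m' \prec m'+1 \prec \ldots \prec e-1 \prec 0 \prec \ldots \prec m'-1$.

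The main obstacle I anticipate is \emph{bookkeeping of the order $\prec$ under $\Phi_{e-1}$}: unlike the transposition moves, $\Phi_{e-1}$ does not simply permute runner labels but shifts $b_0$ and $b_{e-1}$, so the induced total order $\prec$ on $I$ changes in a way that must be recomputed before the next round of sorting. I would handle this by working throughout with the multiset of values $\{b_m\}$ together with its spread, which is the clean monovariant, rather than tracking the permutation $\pi_{\bb}$ directly; the spread is manifestly invariant under relabelling and strictly decreases under $\Phi_{e-1}$, giving a clean termination argument. A secondary point to verify is that each intermediate block is genuinely a core block with the stated reduced pair, but this is exactly the content of Lemmas~\ref{L:Scopesy} and~\ref{L:Sc2}, so it requires no new work beyond citing them. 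Finally, since each individual move preserves $\delta$-equivalence and $\delta$-equivalence is transitive, the composite bijection witnesses that $B$ and the terminal flat block $B'$ are $\delta$-equivalent.
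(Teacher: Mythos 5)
Your strategy is the same as the paper's --- sort the entries of $\bb$ with the transpositions $\Phi_i$ ($i\neq e-1$) and compress with $\Phi_{e-1}$, then transport $\delta$-equivalence along the chain via Lemma~\ref{L:Sc2} --- but two of your steps do not go through as written. The first is the termination monovariant. The move $\Phi_{e-1}$ replaces the pair $(b_0,b_{e-1})$ by $(b_{e-1}+1,\,b_0-1)$, so if the maximum of $\bb$ is attained at some index other than $0$, or the minimum at some index other than $e-1$, the spread $\max_m b_m-\min_m b_m$ does \emph{not} decrease: for $e=4$ and $\bb=(2,2,0,0)$ (weakly decreasing, $b_0-b_3=2>1$) the move gives $(1,2,0,1)$, with spread still $2$. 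Your fallback $\sum_m(b_m-\min_m b_m)$ fails for the same reason, since $\sum_m b_m$ is invariant under $\Phi_{e-1}$ and $\min_m b_m$ is unchanged in this example. A monovariant that does work is $\sum_m b_m^2$: it is fixed by the transposition moves, and under $\Phi_{e-1}$ it changes by $(b_{e-1}+1)^2+(b_0-1)^2-b_0^2-b_{e-1}^2=2(b_{e-1}-b_0+1)<0$ whenever $b_0>b_{e-1}+1$; being a sum of squares of integers with fixed total, it is bounded below, so the process halts, and at a halt state no move is available, which is exactly flatness. (The paper asserts termination without proof, so a correct monovariant would be a genuine addition; yours is not one.)

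The second problem is your closing claim that ``each individual move preserves $\delta$-equivalence.'' Lemma~\ref{L:Sc2} gives the $\delta$-equivalence of $B$ and $\Phi_i(B)$ only under the hypothesis that the decomposition numbers of the \emph{target} block $\Phi_i(B)$ agree with the transition coefficients $d_{\bsig\btau}(v)$, and you never discharge that hypothesis; the remark that it ``will be supplied in the eventual application'' is backwards, since it must be supplied inside this proof. The correct order of deduction runs against the direction of the moves: Theorem~\ref{T:Flat} supplies the hypothesis for the terminal flat block, Lemma~\ref{L:Sc2} then yields the $\delta$-equivalence of the last two blocks in the chain \emph{and} (via Lemma~\ref{L:ArikiV}, since $f_{i+1}^{(t)}P^{\btau}$ is bar-invariant with off-diagonal coefficients in $v\N[v]$ and hence equals the canonical basis element $P^{\Psi_i(\btau)}$) transfers the hypothesis one step back, and one inducts along the chain toward $B$. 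Without anchoring at the flat end in this way, no link of your chain is actually known to be a $\delta$-equivalence.
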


\begin{proof}
Following Lemma~\ref{L:Scopesy}, if there exists $i \in I \,\setminus \,\{e-1\}$ with $b_{i}<b_{i+1}$ then we may apply the map $\Phi_i$ to $B$. If $b_0-b_{e-1}>1$ then we may apply the map $\Phi_{e-1}$ to $B$. By repeatedly applying these maps, we obtain a core block $B' = \Phi_{i_1} \Phi_{i_2}\ldots \Phi_{i_z}B$ where $(\mc,\bb')$ is a reduced pair for $B'$ and $\bb'$ is flat. By Theorem~\ref{T:Flat}, $[S^{\bsig}:D^{\btau}]_v = d_{\bsig\btau}(v)$ for all $\bsig,\btau \in B'$ with $\btau \in \La^{\mc}$. Hence by Lemma~\ref{L:Sc2}, we have that $B$ and $B'$ are $\delta$-equivalent.  
\end{proof}

\begin{corollary}
Suppose that $B$ and $B'$ are core blocks. 
If $n_B=n_{B'}$ and $p_B=p_{B'}$ then $B$ and $B'$ are $\delta$-equivalent. 
\end{corollary}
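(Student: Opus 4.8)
The plan is to reduce the general case to the flat case, which is already handled by Theorem~\ref{T:Flat}, using Lemma~\ref{L:Flat} as the bridge. The key observation is that $\delta$-equivalence is an equivalence relation: reflexivity and symmetry are immediate, and transitivity holds because the essentially-same relation on sign sequences is transitive and the decomposition-matrix-preservation condition composes. Granting this, the corollary follows from a single chain of $\delta$-equivalences together with a uniqueness statement for the flat representative within a fixed $(n_B,p_B)$-class.

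First I would invoke Lemma~\ref{L:Flat} to produce, for each of $B$ and $B'$, a $\delta$-equivalent core block whose reduced pair has a flat $\bb$; call these $B_{\flat}$ and $B'_{\flat}$. Since $\delta$-equivalence preserves both $n_B$ and $p_B$, and we are assuming $n_B=n_{B'}$ and $p_B=p_{B'}$, we have $n_{B_{\flat}}=n_{B'_{\flat}}$ and $p_{B_{\flat}}=p_{B'_{\flat}}$. The crux is then to show that any two core blocks with flat reduced pairs and with matching values of $n$ and $p$ are themselves $\delta$-equivalent; combined with transitivity this yields
\[
B \;\sim_\delta\; B_{\flat} \;\sim_\delta\; B'_{\flat} \;\sim_\delta\; B',
\]
which is exactly the assertion.

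To establish the flat-to-flat step I would argue that the block decomposition matrix of a flat core block depends only on $n$ and $p$. By Theorem~\ref{T:Flat}, for a flat reduced pair the entry $[S^{\bla}:D^{\bmu}]_v$ equals $v^{\ell(\delta_{\bla},\delta_{\bmu})}$ when $\delta_{\bla} \ra \delta_{\bmu}$ and $0$ otherwise, and by Proposition~\ref{P:Klesh2} the indexing set of simple modules is exactly $\{\delta_{\bmu} \in \Delta_0\}$. All of this combinatorial data --- the set of sign sequences with $n$ entries equal to $-$ and $p$ entries equal to $+$, the relation $\ra$, the length function $\ell$, and the subset $\Delta_0$ --- is determined up to the removal of redundant $0$ entries purely by $n$ and $p$. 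So the bijection $\Phi$ matching essentially-equal sign sequences of $B_{\flat}$ and $B'_{\flat}$ carries $\delta_{\bla}\ra\delta_{\bmu}$ to $\delta_{\Phi(\bla)}\ra\delta_{\Phi(\bmu)}$ with the same value of $\ell$, and hence preserves each decomposition number, giving $\delta$-equivalence of the two flat blocks.

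The main obstacle is the bookkeeping around the redundant $0$ entries and the order $\prec$: for finite $e$ the flat reduced pair fixes a particular cyclic ordering of the runners, and I must check that the combinatorial quantities $\ell(\delta_{\bla},\delta_{\bmu})$ and the relation $\ra$ are genuinely insensitive to where the $0$s sit, so that two flat blocks with the same $(n,p)$ really do have identical decomposition data under $\Phi$. This is where I would lean on the fact that the operations $\ra$ and $\St(\delta)$ ignore $0$ entries by construction (step (3) in the definition of $\R$ and $\St$ skips over $0$s), and on Corollary~\ref{C:GenBlock} to ensure $\Phi$ is a genuine bijection of blocks. Everything else is routine once transitivity of $\delta$-equivalence is recorded.
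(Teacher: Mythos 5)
Your proposal is correct and follows essentially the same route the paper intends: the corollary is stated without proof there precisely because it is the immediate combination of Lemma~\ref{L:Flat} (reduction of each block to a $\delta$-equivalent flat one) with Theorem~\ref{T:Flat} (the flat decomposition matrix depends only on the sign-sequence combinatorics, hence only on $n_B$ and $p_B$). Your additional checks --- transitivity of $\delta$-equivalence and the insensitivity of $\St$, $\ra$ and $\ell$ to the placement of the $0$ entries --- are exactly the routine details the paper leaves implicit.
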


\begin{theorem} \label{MainAll}
Let $B$ be a core block and suppose $\bla,\bmu \in B$ with $\bmu \in \La^{\ms}$. Then
\[ [S^{\bla}:D^{\bmu}]_v = \begin{cases} 
v^{\ell(\delta_{\bla},\delta_{\bmu})}, & \delta_{\bla} \ra \; \delta_{\bmu}, \\
0, & \text{otherwise}.
\end{cases}\]
\end{theorem}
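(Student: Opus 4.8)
The plan is to deduce the general statement from the flat case, Theorem~\ref{T:Flat}, by transporting the decomposition matrix along a $\delta$-equivalence. If $e=\infty$ the unique reduced pair $(\ms,\bz)$ is flat, so Theorem~\ref{T:Flat} already gives the result; thus I may assume $e$ is finite and fix a reduced pair $(\mc,\bb)$ for $B$.

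First I would apply Lemma~\ref{L:Flat} to produce a core block $B'$ with reduced pair $(\mc,\bb')$, where $\bb'$ is flat, such that $B$ and $B'$ are $\delta$-equivalent. Let $\Phi\colon B\to B'$ be the accompanying bijection, so that $\delta_{\bla}$ and $\delta_{\Phi(\bla)}$ are essentially the same for every $\bla\in B$, and (by Proposition~\ref{P:Klesh2}) $\bmu\in\La^{\ms}$ if and only if $\Phi(\bmu)\in\La^{\ms}$. In particular $\Phi(\bmu)\in\La^{\ms}$, so Theorem~\ref{T:Flat} applies to the pair $\Phi(\bla),\Phi(\bmu)\in B'$ and computes $[S^{\Phi(\bla)}:D^{\Phi(\bmu)}]_v$ as $v^{\ell(\delta_{\Phi(\bla)},\delta_{\Phi(\bmu)})}$ when $\delta_{\Phi(\bla)}\ra\delta_{\Phi(\bmu)}$, and $0$ otherwise.

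Next I would invoke the definition of $\delta$-equivalence, which gives $[S^{\bla}:D^{\bmu}]_v=[S^{\Phi(\bla)}:D^{\Phi(\bmu)}]_v$, and then rewrite the right-hand side in terms of $\delta_{\bla}$ and $\delta_{\bmu}$. The crucial observation is that both the relation $\ra$ and the statistic $\ell$ are determined purely by the essentially-same class of their arguments: the pairing procedure defining $\St(\delta)$, the sets $\delta^S$, and the cardinality $|S|$ all ignore the redundant $0$ entries. Hence $\delta_{\Phi(\bla)}\ra\delta_{\Phi(\bmu)}$ holds if and only if $\delta_{\bla}\ra\delta_{\bmu}$ does, and in that case $\ell(\delta_{\Phi(\bla)},\delta_{\Phi(\bmu)})=\ell(\delta_{\bla},\delta_{\bmu})$. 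Substituting yields precisely the asserted formula.

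Almost all of the work has by this point been absorbed into Lemma~\ref{L:Flat} and Theorem~\ref{T:Flat}, so the only genuinely new point --- and the one I would be careful to spell out --- is the final invariance check, namely that $\ra$ and $\ell$ descend to the essentially-same classes. This is routine from the definitions in Section~\ref{S:SS}, but it is what makes the statement well posed, since the symbol $\delta_{\bla}$ is itself only defined up to essentially-same; once it is granted, the theorem follows by direct substitution.
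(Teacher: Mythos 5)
Your proposal is correct and follows essentially the same route as the paper: the paper's proof of Theorem~\ref{MainAll} is exactly the two-step reduction via Lemma~\ref{L:Flat} to a $\delta$-equivalent core block with flat base tuple, followed by an appeal to Theorem~\ref{T:Flat}. The only difference is that you explicitly verify that $\ra$ and $\ell$ depend only on the essentially-same class of the sign sequences, a point the paper leaves implicit; spelling it out is a reasonable addition but not a divergence in method.
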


\begin{proof}
Following Lemma~\ref{L:Flat}, we know that $B$ is $\delta$-equivalent to a core block $B'$ with reduced pair $(\mc,\bb')$ where $\bb'$ is flat. The theorem then follows from Theorem~\ref{T:Flat}. 
\end{proof}

\subsection{Summary and examples} \label{S:Wrapup}
We collect together the main results of this paper.

\begin{theorem} \label{T:Summary}
Let $B$ be a core block and let $B_0 = B \cap \La^{\mc}$. Let $n=n_B, \, p=p_B$ and $m=\min\{n,p\}$.
\begin{enumerate}
\item We can identify a bipartition $\bla \in B$ with a sequence $\delta_{\bla} \in \Delta$ which contains $n$ entries equal to $-$ and $p$ entries equal to $+$. If $\bnu \in B$ then $\delta_{\bnu}$ is formed by permuting these $n+p$ entries. Hence 
\[|B|=\binom{n+p}{m}.\]
\item Suppose $\bmu \in B$. Then $\bmu \in B_0$ if and only if $\delta_{\bmu} \in \Delta_0$. Hence \[|B_0| = \binom{n+p}{m} - \binom{n+p}{m-1}.\]
\item We have $\wt(B)=m$.  
\item There exists $d \in \Z$ such that if $\ms=(s_1,s_2)$ then if $e$ is finite (resp. $e=\infty$) then $s_1 \equiv n+d \mod e$ and $s_2 \equiv p+d \mod e$ (resp. $s_1=n+d$ and $s_2 = p+d$). 
\item Suppose that $\bla,\bmu \in B$ with $\bmu \in B_0$. Then 
\[ [S^{\bla}:D^{\bmu}]_v = \begin{cases} 
v^{\ell(\delta_{\bla},\delta_{\bmu})}, & \delta_{\bla} \ra \; \delta_{\bmu}, \\
0, & \text{otherwise}.
\end{cases}\]
\end{enumerate}
\end{theorem}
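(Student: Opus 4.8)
The final statement, Theorem~\ref{T:Summary}, is a summary collecting together the results established throughout the paper, so my plan is to assemble the proof by invoking the relevant earlier results for each of the five parts rather than proving anything from scratch. For part~(1), the identification of $\bla \in B$ with $\delta_{\bla} \in \Delta$ having $n_B$ minus-signs and $p_B$ plus-signs is exactly the content of the construction in Section~\ref{S:DecompMat} together with Corollary~\ref{C:GenBlock}, which tells us that $\delta_{\bnu}$ is obtained from $\delta_{\bla}$ by permuting the $\pm$ entries and that every such permutation arises from some $\bnu \in B$. The counting formula $|B|=\binom{n+p}{m}$ then follows by observing that, since the $0$-entries are fixed in position, elements of $B$ correspond bijectively to arrangements of $n$ minus-signs and $p$ plus-signs in $n+p$ fixed slots; the number of such arrangements is $\binom{n+p}{n}=\binom{n+p}{p}=\binom{n+p}{m}$, the last equality holding because $m=\min\{n,p\}$.

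For part~(2), the characterization $\bmu \in B_0 \iff \delta_{\bmu} \in \Delta_0$ is precisely Proposition~\ref{P:Klesh2}. The counting formula $|B_0| = \binom{n+p}{m} - \binom{n+p}{m-1}$ then requires a combinatorial count of those sign arrangements lying in $\Delta_0$, i.e.\ those for which the reduction procedure $\R$ leaves either no $-$ or no $+$. I would argue that the arrangements \emph{not} in $\Delta_0$ are exactly those which, after removing the $0$-entries, reduce under the bracket-matching procedure to a sequence still containing both a $-$ and a $+$; this is the classical ballot/Catalan-style count where unmatched $-$s precede unmatched $+$s. The complementary count is a standard reflection (André) argument: arrangements of $n$ minus-signs and $p$ plus-signs whose reduced form retains at least one matched-and-surviving pair of both signs biject with arrangements counted by $\binom{n+p}{m-1}$, yielding the stated difference. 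This combinatorial identity is the one genuinely new computational step, though it is routine. Part~(3) is immediate from Lemma~\ref{L:Weight2}, which gives $\wt(B)=m_B=m$, and part~(4) is immediate from the Lemma preceding Lemma~\ref{L:Weight2} that produces the integer $d$ with the required congruences (resp.\ equalities).

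Finally, part~(5) is exactly the statement of Theorem~\ref{MainAll}, once we note that $B_0 = B \cap \La^{\mc}$ is by definition the set of $\bmu \in B$ with $\bmu \in \La^{\ms}$ (here $\La^{\mc}$ and $\La^{\ms}$ agree since $a_k \equiv s_k \bmod e$, so the two notations index the same Kleshchev bipartitions), and so the decomposition number formula transfers verbatim. Thus the only real work is collecting citations and verifying the two binomial counts in parts~(1) and~(2).

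The main obstacle I anticipate is the combinatorial count in part~(2): justifying $|B_0| = \binom{n+p}{m} - \binom{n+p}{m-1}$ cleanly. The subtraction strongly suggests a reflection-principle bijection between the $\delta \notin \Delta_0$ (those with both $-$ and $+$ surviving in $\R(\delta)$) and a set of cardinality $\binom{n+p}{m-1}$. I would set this up by taking $\delta \notin \Delta_0$, locating in $\R(\delta)$ the position separating surviving $-$s from surviving $+$s, and flipping the sign-count on one side to produce an arrangement with $(n-1,p+1)$ or $(n+1,p-1)$ signs --- the usual path-reflection that converts one balance condition into the shifted binomial $\binom{n+p}{m-1}$. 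Care is needed because the $0$-entries sit in fixed positions and the reduction $\R$ only pairs a $-$ with a later $+$ across intervening $0$s, but since the $0$s do not affect which signs survive, the count reduces to the pure sign sequence and the standard ballot argument applies; I would state the bijection explicitly and verify it respects the surviving-sign count, rather than appeal vaguely to ``the reflection principle.''
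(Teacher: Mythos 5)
Your proposal is correct and follows essentially the same route as the paper: every part except the $|B_0|$ formula is obtained by citing the earlier results (Corollary~\ref{C:GenBlock}, Proposition~\ref{P:Klesh2}, Lemma~\ref{L:Weight2}, the congruence lemma, Theorem~\ref{MainAll}), and the only genuine work is the ballot-type count of $\Delta_0$. The sole difference is cosmetic: the paper identifies the sequences in $\Delta_0$ with standard Young tableaux of two-row shape and quotes the count $\binom{n+p}{m}-\binom{n+p}{m-1}$, whereas you prove the same identity by an Andr\'e reflection on the complement; both are standard proofs of the same fact, and your observation that the fixed $0$-entries do not affect the count is the right point to make explicit.
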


\begin{proof}
All these results appear in the previous section, apart from the formula for $|B_0|$. To see that this holds, first suppose that $n \geq p$. Identify each sequence with $n$ entries equal to $-$ and $p$ entries equal to $+$ with a $(n,p)$-tableau by putting the entries corresponding to $-$ in the top row and to $+$ in the bottom row. The tableau is standard if and only if the sequence lies in $\Delta_0$. Since the number of standard $(n,p)$-tableaux is given by the formula above, we are done. The case that $n<p$ follows by symmetry. 
\end{proof}

For $\bla,\bnu$ in a core block $B$, let $\dim(\Hom(S^{\bla},S^{\bnu}))_v$ denote the graded dimension of the homomorphism space between $S^{\bla}$ and $S^{\bnu}$. The next result appears in George Witty's 2020 thesis~\cite{WittyThesis}. Although Witty only proves the case where $(\mc,\bz)$ is a reduced pair for $B$, he conjectures that the result holds for an arbitrary reduced pair~\cite[Conjecture~4.32]{WittyThesis}. It is possible that one could prove this conjecture using Scopes equivalence~\cite{Webster}. 

\begin{theorem}[{\cite[Theorem~4.27]{WittyThesis}}] \label{T:Witty}
Suppose that $B$ is a core block and that $(\mc,\bz)$ is a reduced pair for $B$. 
Suppose that $e\neq 2$ or that $n_B \neq p_B$. 
Let $\bla,\bnu \in B$. Then
\[ \dim(\Hom(S^{\bla},S^{\bnu}))_v = \begin{cases} 
v^{\ell(\delta_{\bla},\delta_{\bnu})}, & \delta_{\bla} \rra \; \delta_{\bnu}, \\
0, & \text{otherwise}.
\end{cases}\]
If $e=2$ or $n_B=p_B$ then the formula above gives a lower bound for $\dim(\Hom(S^{\bla},S^{\bnu}))_v$. 
\end{theorem}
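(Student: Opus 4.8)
The plan is to establish the formula as a matching pair of bounds: a lower bound valid in every case, obtained by constructing explicit graded homomorphisms, and an upper bound valid under the hypothesis $e\neq 2$ or $n_B\neq p_B$, obtained from the decomposition numbers of Theorem~\ref{MainAll} together with the submodule structure of the Specht modules. Throughout I would work with the fixed reduced pair $(\mc,\bz)$, so that $\prec$ is the usual order and the combinatorics of $\St(\delta)$, $\ra$ and $\rra$ can be read directly off the abacus. This is exactly why the statement is restricted to $(\mc,\bz)$: homomorphism spaces, unlike decomposition numbers, are not transported by the Scopes-type bijections $\Phi_i$ of Lemma~\ref{L:Scopesy}, so the reduction of Lemma~\ref{L:Flat} to a flat base tuple is unavailable here.

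For the lower bound, suppose $\delta_{\bla}\rra\delta_{\bnu}$, and let $S=\{(i_1,j_1),\ldots,(i_t,j_t)\}\subseteq\St(\delta_{\bnu})$ be a nesting-closed witnessing set, so that $\delta_{\bla}=\delta_{\bnu}^{S}$ and $\ell(\delta_{\bla},\delta_{\bnu})=t$. I would build a nonzero graded homomorphism $S^{\bla}\to S^{\bnu}$ of degree $t$ by composing $t$ elementary homomorphisms, one for each arc of $S$, each of degree one in the KLR grading, performed in an order compatible with the nesting (innermost arcs first). Each elementary map realises a single swap of a $-+$ pair across one arc $(i,j)$ with $\delta_m=0$ for $i<m<j$, in the manner of the operators $s_{ij}$ of Lemma~\ref{L:GenBlock}; on the abacus this is a one-step bead move, and such maps are the Carter--Payne-type homomorphisms of the graded Specht modules. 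The nesting-closure of $S$ is precisely the condition guaranteeing that these elementary maps stack without their arcs crossing, so that the composite does not vanish; combinatorially this is the composability of oriented cap-cup diagrams in the arc-algebra picture of Brundan--Stroppel~\cite{BS:KhovIII}. This produces the monomial $v^{t}$, giving $\dim(\Hom(S^{\bla},S^{\bnu}))_v\geq v^{\ell(\delta_{\bla},\delta_{\bnu})}$ whenever $\delta_{\bla}\rra\delta_{\bnu}$, which in particular yields the final lower-bound clause of the theorem.

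For the upper bound I would first record, from the graded cellular structure of $\h$, the standard identity
\[
\dim\!\big(\Hom(S^{\bla},S_{\bnu})\big)_v=\sum_{\bmu\in B_0} d_{\bla\bmu}(v)\,\overline{d_{\bnu\bmu}(v)}
\]
for the graded dual Specht module $S_{\bnu}=(S^{\bnu})^{\circledast}$, and evaluate its right-hand side using Theorem~\ref{MainAll}: since every $d_{\bla\bmu}(v)$ is a single monomial $v^{\ell(\delta_{\bla},\delta_{\bmu})}$, the sum becomes a purely combinatorial count over sign sequences $\delta_{\bmu}$ with $\delta_{\bla}\ra\delta_{\bmu}$ and $\delta_{\bnu}\ra\delta_{\bmu}$. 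This fixes the total graded multiplicity of the common composition factors, but it does not yet give $\Hom(S^{\bla},S^{\bnu})$, since $S^{\bnu}$ and its dual $S_{\bnu}$ are genuinely different modules with the same factors. The remaining task is to pass from this count to the actual Specht-to-Specht homomorphisms, for which I would exploit the rigidity of these Specht modules: because their graded decomposition numbers are monomials, the grading computes the radical filtration, so the Loewy layers of $S^{\bnu}$ are read off directly from the exponents $\ell(\delta_{\bnu},\delta_{\bmu})$. Tracking which common factors can simultaneously lie in a quotient of $S^{\bla}$ and the corresponding submodule of $S^{\bnu}$ then selects exactly the nesting-closed sets $S$, producing the relation $\rra$ and the single monomial $v^{\ell(\delta_{\bla},\delta_{\bnu})}$.

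The main obstacle is this last step: the rigidity of $S^{\bnu}$ and the precise location of its socle must be pinned down in enough detail to separate $\ra$ from $\rra$, that is, to see that a non-nested witnessing set still contributes a common composition factor but no homomorphism. It is here that the hypothesis $e\neq 2$ or $n_B\neq p_B$ enters: when $e=2$ and $n_B=p_B$ the block has weight one and the two residues $0,1$ behave symmetrically, contributing an extra self-dual homomorphism that the combinatorial count does not detect, so the construction of the lower bound still succeeds but the upper-bound argument breaks down, leaving only the stated inequality. I expect the cleanest rigorous route to the rigidity statement is to transport the block to an arc algebra, for $e=\infty$ directly via~\cite{BS:KhovIII} and the quiver Schur description of~\cite{HuMathas}, where cell modules, their duals and all homomorphisms between them carry explicit diagrammatic bases and $\rra$ is manifestly the condition for composing oriented cap-cup diagrams; for finite $e$ one would instead argue through the submodule structure of the Specht modules and the Weyl modules of the associated Schur algebra, as developed in~\cite{BDHS,BDDHMS}.
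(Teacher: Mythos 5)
First, a point of comparison: the paper does not prove Theorem~\ref{T:Witty} at all. It is imported verbatim from Witty's thesis, and the paper explicitly says that his proof proceeds by ``very intricate calculations within the cyclotomic KLR algebras.'' So there is no in-paper argument for your sketch to shadow; the question is whether your plan closes the gaps itself, and it does not. On the lower bound, the two load-bearing claims are asserted rather than proved: (i) that a single arc swap $(i,j)$ carries a nonzero degree-one homomorphism between the corresponding Specht modules, and (ii) that the composite of $t$ such maps, taken innermost arcs first, is nonzero. Neither is automatic. One-step Carter--Payne-type maps for Ariki--Koike algebras in this generality are not available in the paper's toolkit, and composites of nonzero Specht homomorphisms routinely vanish; saying that nesting-closure of $S$ ``is precisely the condition guaranteeing that the composite does not vanish'' simply restates the theorem. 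This nonvanishing is exactly where Witty's intricate KLR computations live. The appeal to the Brundan--Stroppel cap-cup calculus covers only $e=\infty$, $p=0$, and even there the identification of Specht homomorphisms with diagram compositions requires an equivalence the paper never sets up.

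The upper bound has two further problems. The identity $\dim(\Hom(S^{\bla},S_{\bnu}))_v=\sum_{\bmu}d_{\bla\bmu}(v)\,\overline{d_{\bnu\bmu}(v)}$ for the dual cell module is not a standard consequence of graded cellularity: what cellularity gives is $\dim\Hom(P^{\bmu},S^{\bla})_v=[S^{\bla}:D^{\bmu}]_v$ up to a shift, and there is no general closed formula for homomorphisms between two cell modules, or between a cell module and a dual cell module, purely in terms of decomposition numbers. You would need to prove this identity or replace it. Second, the rigidity step --- that monomial graded decomposition numbers force the grading filtration to coincide with the radical filtration, so that the Loewy structure of $S^{\bnu}$ can be read off Theorem~\ref{MainAll} --- is false in general, and where it does hold for these blocks it is itself a deep structural fact (essentially the submodule-structure results of the cited work in preparation), not something that can be invoked for free. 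Your diagnosis of the exceptional case $e=2$, $n_B=p_B$ as producing ``an extra self-dual homomorphism'' correctly locates where equality can fail but is not derived from anything earlier in the argument. The overall architecture --- a constructive lower bound matched against an upper bound extracted from the decomposition numbers --- is a reasonable strategy, and your observation that homomorphism spaces are not transported by the Scopes-type maps $\Phi_i$ (hence the restriction to $(\mc,\bz)$) is a genuine insight; but each of the three steps that would make the strategy work is currently an unproved assertion.
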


Witty's theorem is an application of results proved in his thesis about homomorphisms between Specht modules. His proofs involve very intricate calculations within the cyclotomic KLR algebras.

\medskip
It is likely that a formula similar to that of Theorem~\ref{T:Summary} (5) holds for the graded decomposition numbers for the type $B$ Schur algebras when $\bmu \notin B_0$ and it seems to us that an argument via induction on $|\mathcal{S}(\delta_{\bmu})|$, using the techniques developed in this paper, would prove the result. We thank the referee for asking this question, since our assumption when writing the paper was that our methods would not work for the Schur algebras.  

We end this paper with some examples. 

\begin{ex}
Suppose that $e=5$ and $\ms=(3,1)$. Let $\bla = ((7,5,4,3^2,2,1^2),(14,10,6,4,3,2^2,1))$ and let $B$ be the core block containing $\bla$. Then $((18,16),(2,4,3,1,5))$ is a reduced pair for $B$. By drawing the abacus configurations, we can see that $n_B=3$ and $p_B=1$. 
The four bipartitions in the block are given by the abacus configurations below. 

\begin{align*}
\bla_1: \abacus(bbbbb,bbbbb,nbbnb,nbbnb,nbnnb,nnnnn) & \qquad  \abacus(bbbbb,bbbnb,nbbnb,nbnnb,nnnnb,nnnnb) & 
\bla_2: \abacus(bbbbb,bbbbb,nbbnb,nbbnb,nnnnb,nnnnb) & \qquad \abacus(bbbbb,bbbnb,nbbnb,nbnnb,nbnnb,nnnnn) \\
& (-,0,-,-,+) && (-,0,-,+,-) \\ \\
\bla_3: \abacus(bbbbb,bbbbb,nbbnb,nbnnb,nbnnb,nnnnb) & \qquad  \abacus(bbbbb,bbbnb,nbbnb,nbbnb,nnnnb,nnnnn) &
\bla_4: \abacus(bbbbb,bbbnb,nbbnb,nbbnb,nbnnb,nnnnb) & \qquad \abacus(bbbbb,bbbbb,nbbnb,nbnnb,nnnnb,nnnnn) \\
& (-,0,+,-,-) && (+,0,-,-,-)
\end{align*}

Then the decomposition matrix for $B$ is 
\[\begin{array}{|c|c|ccc|} \hline
\bla_1 &(-,0,-,-,+) & 1& & \\
\bla_2 &(-,0,-,+,-) & v & 1  &   \\ 
\bla_3 &(-,0,+,-,-) & & v & 1  \\
\bla_4 &(+,0,-,-,-) & &  & v \\ \hline 
\end{array}\]

\end{ex}

\begin{ex}
Suppose that $B$ is a core block with $n_B=2$ and $p_B=3$. Then $|B|=10$ and by removing the $0$s from the sequences $\{\delta_{\bla} \mid \bla \in B\}$ we get Specht modules indexed by the $10$ elements below. The block decomposition matrix for $B$ is equal to

\[\begin{array}{|c|ccccc|} \hline
(-,-,+,+,+) & 1& & & &\\
(-,+,-,+,+) & v & 1 & & &   \\ 
(+,-,-,+,+) & & v & 1 & &\\
(-,+,+,-,+) & & v & & 1& \\
(+,-,+,-,+) & v & v^2 & v & v& 1 \\
(+,+,-,-,+) & v^2 &  & & & v\\
(-,+,+,+,-) & & & &v &  \\
(+,-,+,+,-) & & & & v^2& v\\
(+,+,-,+,-) & & & v & &v^2\\
(+,+,+,-,-) & & & v^2 && \\ \hline 
\end{array}\]

\smallskip
Note that this is the matrix that appears in~\cite[Example~6]{LM:FockI}. 
\end{ex}

\end{document}